\numberwithin{equation}{section}
\newcommand{\Q}{\mathbb{Q}}
\newcommand{\C}{\mathbb{C}}
\newcommand{\codi}{{{\rm codim} \ }}
\renewcommand{\bf}{\textbf}
 \renewcommand{\to}{\rightarrow}
\newcommand{\SO}{{\rm SO}}
\newcommand{\PGL}{{\rm PGL}}
\newcommand{\Hom}{{\rm Hom}}
\newcommand{\Ad}{\mathrm{Ad}}
\newcommand{\Epi}{\mathrm{Epi}}
\newcommand{\Lie}{\mathrm{Lie}}
\newcommand{\uG}{\underline{G}}
\newcommand{\uH}{\underline{H}}
\newcommand{\uK}{\underline{K}}
\newcommand{\frakg}{\mathfrak{g}}
\newcommand{\uW}{\underline{W}}
\newcommand{\uX}{\underline{X}}
\newcommand{\uY}{\underline{Y}}
\newcommand{\R}{\mathbb{R}}
\newtheorem{thm}{Theorem}[section]
\newtheorem{prop}[thm]{Proposition}
\newtheorem{lem}[thm]{Lemma}
\newtheorem{cor}[thm]{Corollary}
\newtheorem{rmk}[thm]{Remark}
\theoremstyle{definition}
\newtheorem{remk}[thm]{Remark}
\newtheorem*{ack}{Acknowledgments}
\begin{document}

\title{Deformation theory\\ and finite simple quotients of triangle groups II}

\author{Michael Larsen, Alexander Lubotzky, Claude Marion}

\date{\today}
\maketitle

\textbf{Abstract}   This paper is a continuation of our first paper \cite{LLM1} in which we showed how deformation theory of representation varieties can be used to study finite simple quotients of triangle groups. While in Part I, we mainly used deformations of the principal homomorphism from ${\rm SO}(3,\R)$, in this part we use ${\rm PGL}_2(\R)$ as well as deformations of representations which are very different from the principal homomorphism.

\section{Introduction}

This paper is a continuation of \cite{LLM1} where it was shown that deformation theory of representation varieties of finitely generated groups $\Gamma$, and in particular
 of hyperbolic  triangle groups $\Gamma=T$, can be used to prove the existence of many finite simple quotients of 
 $\Gamma$.  Let us recall some basic notation. Let $$T=T_{a,b,c}=\langle x,y,z: x^a=y^b=z^c=xyz=1 \rangle$$ be a hyperbolic triangle group so that $a,b,c\in \mathbb{N}$ satisfy $1/a+1/b+1/c<1$. Without loss of generality, we assume $a\leq b \leq c$ and call $(a,b,c)$ a hyperbolic triple of integers.
 
We let $X$ be an irreducible Dynkin diagram and denote by $X(\mathbb{C})$ (resp. $\Lie(X)$) the simple adjoint algebraic group over $\mathbb{C}$ (resp. the simple complex Lie algebra) of type $X$. Also $X(p^\ell)$ denotes the untwisted finite simple group of type $X$ over $\mathbb{F}_{p^\ell}$.
 We say that $T$ is saturated with finite quotients of type $X$ if there exist $p_0, e \in \mathbb{N}$ such that for all primes  $p>p_0$, $X(p^{e\ell})$ is  a quotient of $T$ for every $\ell \in \mathbb{N}$, and for a set of positive density of primes $p$, we even have $X(p^\ell)$ is a quotient of $T$ for every $\ell \in \mathbb{N}$. 
 
 The main idea of \cite{LLM1} was the observation (see Theorem 4.1 therein) that $T$  is saturated with finite quotients of type $X$ if and only if there exist a simple algebraic group $\uG$ over $\mathbb{C}$ of type $X$ and a Zariski dense representation $\rho: T \rightarrow \uG(\mathbb{C})$ which is not locally rigid, i.e. $\dim H^1(T,\frak{g})>0$, where $\frak{g}$ is the Lie algebra of $\uG$ and $T$ acts on $\frak{g}$ via ${\rm Ad}\circ \rho$.
 
 In \cite{LLM1} we showed that for all pairs $(X,(a,b,c))$ which are not listed in \cite[Table 1]{LLM1}, $T$ is saturated with finite quotients of type $X$. The main goal of the current paper is to push the deformation method further in order to eliminate some of the cases left unsettled in \cite[Table 1]{LLM1}.   
 
In \cite{LLM1} we produced representations  of $T$ into  an absolutely simple compact real form $\uG$ of $X$  by first  using a Zariski dense representation of $T$ into ${\rm SO}(3,\R)$. From there, we  deformed the representation $T\rightarrow {\rm SO}(3,\R) \rightarrow \uG(\mathbb{R})$ induced from the principal homomorphism ${\rm SO}(3) \rightarrow \uG$. This method did not permit us to consider the six triangle groups in
$$S=\{T_{2,4,6},T_{2,6,6},T_{2,6,10},T_{3,4,4},T_{3,6,6},T_{4,6,12}\},$$
which are the (only) hyperbolic triangle groups without ${\rm SO}(3)$-dense representations (see \cite{LL}). 
So our first goal will be to extend in \S \ref{s:nonso3dense} the method we implemented in \cite{LLM1}  for compact forms, to non compact forms. This time we will start with a representation $T\rightarrow {\rm PGL}_2$ instead of $T\rightarrow {\rm SO}(3)$. In this way, our results will also include these six groups. Note that every Fuchsian group admits a Zariski dense embedding into ${\rm PGL}_2(\mathbb{C})$, so this method can be applied to any (hyperbolic) triangle group, at the
cost of some additional complications.
 
In \cite{LLM1} we sometimes use  ``two-step ladders" or even ``three-step ladders" $$T \rightarrow {\rm SO}(3)\rightarrow \uK\rightarrow \uH \rightarrow \uG$$ to deform the representation $T\to \SO(3)\to \uG$
first to a dense homomorphism to $\uK$, thence to a dense homomorphism to $\uH$, and finally to a dense homomorphism to $\uG$.
Here, we use a non-compact version of the same idea.

Some cases which cannot be covered by the principal homomorphism method can still be dealt by variants of the deformation-theoretic approach.  Here we present two such:

\begin{enumerate}[(i)]
\item Starting with a Zariski dense representation of $T$ into a group of type $B_{k-1}\times B_{r-k} \subset D_{r}$
we deform it to a Zariski dense representation into a group of type $D_r$.  Here, the novelty is that the homomorphism
$\PGL_2\to D_r$ is non-principal even though each homomorphism $\PGL_2\to B_i$ is principal.

\item Starting with a representation of $T$ onto the finite group 
$${\rm Alt}_n \subset {\rm SO}(n-1),$$
we deform it to a Zariski dense representation to ${\rm SO}(n-1)$.
\end{enumerate}

Using these methods in \S \ref{s:bibi} and \S \ref{s:altm}, respectively, we will conclude that:

\begin{thm}\label{t:main}
The hyperbolic triangle group $T=T_{a,b,c}$ is saturated with finite quotients of type $X$ except possibly if  $(T,X)$ appears in Table \ref{tab:main} or  Table \ref{tab:mainrigid}.
\end{thm}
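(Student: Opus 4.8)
The statement is really a summary/collation theorem: it asserts that combining the results of \cite{LLM1} with the new techniques of this paper (Sections \ref{s:nonso3dense}, \ref{s:bibi}, \ref{s:altm}) settles the saturation question for all pairs $(T,X)$ outside two explicit tables. So the "proof" is essentially bookkeeping: one must verify that every pair left open in \cite[Table 1]{LLM1} is either resolved by one of the three new methods, or else appears in Table \ref{tab:main} or Table \ref{tab:mainrigid}. Let me sketch how I would organize this.

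First I would recall the equivalence from \cite[Theorem 4.1]{LLM1} (quoted in the introduction): $T$ is saturated with finite quotients of type $X$ iff there is a Zariski-dense, non-locally-rigid representation $\rho\colon T\to \uG(\mathbb{C})$ with $\uG$ of type $X$. Thus it suffices, for each pair $(T,X)$ in \cite[Table 1]{LLM1} not in our two new tables, to exhibit such a $\rho$. I would then partition the open pairs according to which construction applies. Case (a): the six triangle groups in $S=\{T_{2,4,6},T_{2,6,6},T_{2,6,10},T_{3,4,4},T_{3,6,6},T_{4,6,12}\}$, which have no $\SO(3)$-dense representation — here I invoke the $\PGL_2(\mathbb{R})$-based deformation of \S\ref{s:nonso3dense}, including its two- and three-step non-compact ladder variants, to produce dense non-rigid representations into the relevant $\uG(\mathbb{C})$. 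Case (b): pairs where $X=D_r$ and the principal $\PGL_2\to D_r$ fails but the non-principal homomorphism factoring through $B_{k-1}\times B_{r-k}$ works — apply the construction of \S\ref{s:bibi}, choosing $k$ appropriately for each $(a,b,c)$ and checking $\dim H^1(T,\frak{g})>0$ via the dimension count there. Case (c): pairs where one deforms the surjection $T\twoheadrightarrow \mathrm{Alt}_n\subset \SO(n-1)$ to a Zariski-dense representation into $\SO(n-1)$, as in \S\ref{s:altm}; this requires that $T$ actually surjects onto a suitable $\mathrm{Alt}_n$ and that the deformation space is positive-dimensional.

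The main steps, in order: (1) list the pairs unresolved in \cite[Table 1]{LLM1}; (2) for each, determine which of the three mechanisms (or a ladder combination) is designed to handle it, and cite the corresponding proposition/lemma from \S\ref{s:nonso3dense}, \S\ref{s:bibi}, or \S\ref{s:altm} to conclude the existence of a dense non-rigid $\rho$; (3) collect the residual pairs — those for which none of the methods yields $\dim H^1>0$, or for which the candidate representations are provably rigid — into Tables \ref{tab:main} and \ref{tab:mainrigid}; (4) conclude by \cite[Theorem 4.1]{LLM1}. I would also note that the $\PGL_2$ method applies in principle to \emph{every} hyperbolic triangle group, so it in particular recovers the non-$S$ cases already handled in \cite{LLM1}, ensuring no pair is accidentally dropped.

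The hard part is not any single deformation argument but the exhaustiveness of the case analysis: one must be certain that the union of the domains of applicability of the three constructions, together with the two tables, covers \emph{every} pair $(T,X)$, with no gaps and with the tables containing exactly the genuinely unresolved cases. In practice this means carefully matching the numerology of each construction — the constraints on $(a,b,c)$ coming from the orders of the images of $x,y,z$ under the relevant homomorphism into $\uG$, and the sign/dimension conditions under which $\dim H^1(T,\frak{g})>0$ — against the list in \cite[Table 1]{LLM1}. I expect the $B_{k-1}\times B_{r-k}\subset D_r$ case to be the most delicate, since there the choice of the splitting parameter $k$ and the verification that the resulting non-principal $\PGL_2\to D_r$ gives a non-rigid representation both depend sensitively on $(a,b,c)$; getting the boundary cases right (and thereby the precise content of the exceptional tables) is where the real care is needed.
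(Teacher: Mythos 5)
Your proposal is correct and follows essentially the same route as the paper: Theorem \ref{t:main} is indeed proved by combining the criterion of \cite[Theorem 4.1]{LLM1} (Theorem \ref{t:key}) with the three new constructions — the $\PGL_2$ principal-homomorphism ladders of \S\ref{s:nonso3dense} (Proposition \ref{p:nonso3dense}) for the groups in $S$, the $B_k\times B_{r-k-1}<D_r$ deformations of \S\ref{s:bibi} (Theorem \ref{t:semisimple} and Table \ref{tab:semisimple}), and the alternating-group method of \S\ref{s:altm} (Proposition \ref{p:pos}) — and then collating the surviving cases from \cite[Table 1]{LLM1} into Tables \ref{tab:main} and \ref{tab:mainrigid}. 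The only caveat is that your write-up is a plan rather than the executed bookkeeping, but the structure you describe is exactly the paper's.
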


\begin{table}
\caption{Possible  (nonrigid) exceptions  to Theorem \ref{t:main}}\label{tab:main}
\center
\begin{tabular}{|l|l|l|}
\hline
$X$& $(a,b,c)$ & $r$\\
\hline
 $A_r$& $(2,3,7)$ & $5 \leq r \leq 19$ \\
& $(2,3,8)$ & $5 \leq r \leq 13$\\
& $(2,3,c)$, $c \geq 9$ & $ 5 \leq r\leq 7$\\
& $(2,4,5)$ & $ 3 \leq r\leq 13$\\
& $(2,4,6)$ & $ 3 \leq r\leq 9$\\
& $(2,4,c)$, $c \geq 7$& $ 3 \leq r \leq 5$\\
& $(2,5,5)$ & $r=6$\\
& $(2,b,c)$, $b \geq 5$, $c \geq 5$ & $r=3$\\
& $(3,3,c)$, $c \geq 4$ & $r\in \{3,4,6\}$\\
\hline
$B_3$ & $(2,3,c)$, $c \geq 7$&   \\
& $(3,3,c)$, $c \geq 4$, $c \neq 15c_1$ & \\
& $(2,4,5)$ & \\
& $(2,5,5)$ & \\
\hline
 $D_r$ & $(2,3,7)$ & $r \in\{4,5,9\}$\\
& $(2,3,8)$& $r\in \{4,5\}$ \\
& $(2,3,9)$ & $r\in \{4,5\}$\\
& $(2,3,10)$ & $r\in \{4,5\}$\\
& $(2,3,c)$, $c\geq 11$, $c\neq 15c_1$ & $r=4$\\
& $(2,3,c)$, $c\geq 12$, $c \neq 11c_1$ & $r=5$\\
& $(2,4,5)$ & $r =5$\\
& $(3,3,4)$ & $r \in\{4,5\}$\\
& $(3,3,c)$, $c\geq 5$ and & $r =4$\\
& $c\not \in \{7c_1,9c_1,10c_1,12c_1,15c_1\}$  & \\
\hline
$E_6$ & $(2,3,7)$ &\\
& $(2,3,8)$& \\
&$(2,4,5)$ & \\
& $(2,4,6)$&\\
& $(2,4,7)$&\\
& $(2,4,8)$ & \\
\hline
\end{tabular}\\
Here $c_1$ denotes any natural number
\end{table}

\begin{table}
\caption{Rigid exceptions  to Theorem \ref{t:main}}\label{tab:mainrigid}
\center
\begin{tabular}{|l|l|}
\hline
$X$& $(a,b,c)$ \\
\hline
 $A_1$& any  \\
$A_2$& $a=2$\\
$A_3$ & $a=2$, $b=3$\\
$A_4$ & $a=2$, $b=3$\\
\hline
$C_2$ & $b=3$\\
\hline
$G_2$ & $a=2$, $c=5$\\
\hline
\end{tabular}
\end{table}

For the cases  appearing in Table \ref{tab:mainrigid} we know for sure that $T$ is not saturated with finite quotients of type $X$. (These are the rigid cases---see \cite{Marionconj} and \cite{LLM1}.)  For the rest (i.e. the  cases appearing in Table \ref{tab:main}) we do not know the answer. 

Examining Tables \ref{tab:main} and \ref{tab:mainrigid} we can immediately deduce:

\begin{cor}\label{cor:first}
The following two assertions hold:
\begin{enumerate}[(i)]
\item  If $\mu=1/a+1/b+1/c\leq1/2$ then for every simple Dynkin diagram $X\neq A_1$, $T_{a,b,c}$ is saturated with finite  quotients of type $X$.
\item Let\begin{eqnarray*}Y& = & \{A_r:1\leq r\leq 19\} \cup \{B_3\}\cup \{C_2\}\cup \{G_2\}\cup\{E_6\}\\
& & \cup\{D_r: r=4,5,9\}.
\end{eqnarray*}
Then for every hyperbolic triple $(a,b,c)$ and every simple Dynkin diagram $X \not \in Y$, $T_{a,b,c}$ is saturated with finite  quotients of type $X$.
\end{enumerate}
\end{cor}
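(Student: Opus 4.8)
The plan is to deduce Corollary \ref{cor:first} directly from Theorem \ref{t:main} by a careful bookkeeping of the two exception tables. For part (i), I would first observe that $\mu=1/a+1/b+1/c\leq 1/2$ is a strong combinatorial constraint: it forces, in particular, $c\geq 4$ and rules out all of $(2,3,c)$, $(2,4,c)$, and $(3,3,c)$ for the small values of $c$ that dominate Table \ref{tab:main}. I would then go through Table \ref{tab:mainrigid} row by row: $A_1$ is explicitly excluded by hypothesis; $A_2$ requires $a=2$, but $a=2$ together with $\mu\leq 1/2$ forces $1/b+1/c\leq 0$, impossible, so in fact $\mu\leq1/2$ already implies $a\geq 3$, killing the $A_2$, $A_3$, $A_4$ rows (all of which demand $a=2$) and the $C_2$ and $G_2$ rows as well (the latter demands $a=2$). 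Hence for $\mu\leq 1/2$ the only possible obstructions come from Table \ref{tab:main}, and I would check that each family there involves a triple with $a=2$ (most rows) or one of $(3,3,c)$ — but $(3,3,c)$ has $\mu=2/3+1/c>1/2$, and $a=2$ is already excluded — so no row of Table \ref{tab:main} can occur when $\mu\leq 1/2$. Combining, Theorem \ref{t:main} then yields saturation for every $X\neq A_1$.

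For part (ii), I would simply read off from Tables \ref{tab:main} and \ref{tab:mainrigid} the complete list of Dynkin types $X$ that appear as the first entry of some row. From Table \ref{tab:main} these are $A_r$ with $r$ ranging up to $19$, $B_3$, $D_r$ with $r\in\{4,5,9\}$, and $E_6$; from Table \ref{tab:mainrigid} they are $A_1,A_2,A_3,A_4$, $C_2$, and $G_2$. The union of all these types is exactly the set $Y$ defined in the statement (noting $A_1,\dots,A_4$ are absorbed into $\{A_r:1\leq r\leq 19\}$). Therefore, if $X\notin Y$, then $(T,X)$ cannot appear in either table for any hyperbolic triple $(a,b,c)$, and Theorem \ref{t:main} gives saturation unconditionally.

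The routine-but-only-real work is the arithmetic verification in part (i) that the hypothesis $\mu\leq 1/2$ genuinely excludes every listed triple; I would present this as a short case analysis on $a$: if $a\geq 4$ the triple is nowhere in the tables; if $a=3$ then $\mu\leq 1/2$ forces $1/b+1/c\leq 1/6$, hence $b\geq 7$, while every $a=3$ row in the tables has $b=3$; and $a=2$ is incompatible with $\mu\leq 1/2$ outright. I do not anticipate any genuine obstacle here — the corollary is purely a matter of inspecting the tables against the inequality — so the ``hard part'' is merely ensuring the table transcription into $Y$ is exhaustive and that no borderline triple (e.g.\ one with $\mu=1/2$ exactly) slips through; a direct check confirms that equality $\mu=1/2$ still forces $a\geq 3$ and the same conclusion holds.
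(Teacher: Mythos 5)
Your proposal is correct and is exactly the argument the paper intends: the corollary is stated as an immediate consequence of examining Tables \ref{tab:main} and \ref{tab:mainrigid} against Theorem \ref{t:main}, and your case analysis (that $\mu\leq 1/2$ forces $a\geq 3$, and $a=3$ forces $b\geq 7$, which excludes every listed triple other than those attached to $A_1$; and that the union of types appearing in the two tables is precisely $Y$) is the verification the authors leave to the reader.
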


\begin{cor}
Assume $X \not \in \{A_r:1\leq r\leq 7\}\cup \{B_3\}\cup\{C_2\}\cup\{D_r: r=4,5\}$. Then for almost every hyperbolic triple $(a,b,c)$, the group $T=T_{a,b,c}$ is saturated with finite  quotients of type $X$.
\end{cor}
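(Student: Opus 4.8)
The plan is to deduce the corollary directly from Theorem~\ref{t:main} by a finite inspection of Tables~\ref{tab:main} and \ref{tab:mainrigid}. By the theorem, for a fixed simple Dynkin diagram $X$ the hyperbolic triple $(a,b,c)$ can fail to make $T_{a,b,c}$ saturated with finite quotients of type $X$ only if $(T_{a,b,c},X)$ occurs in one of these two tables. Writing $Y=\{A_r:1\le r\le 7\}\cup\{B_3\}\cup\{C_2\}\cup\{D_r:r=4,5\}$, the task is therefore to show that for every $X\notin Y$ only finitely many triples $(a,b,c)$ appear alongside $X$ in the union of the two tables; ``almost every hyperbolic triple'' then means all but these finitely many.

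First I would dispose of the types $X$ that are absent from both tables. Table~\ref{tab:main} lists only types $A_r$ with $r\le 19$, $B_3$, $D_r$ with $r\in\{4,5,9\}$, and $E_6$, while Table~\ref{tab:mainrigid} lists only $A_r$ with $r\le 4$, $C_2$, and $G_2$. Hence for $X$ of type $E_7$, $E_8$, $F_4$, $B_r$ with $r\ne 3$, $C_r$ with $r\ne 2$, $A_r$ with $r\ge 20$, or $D_r$ with $r\notin\{4,5,9\}$, the pair $(T_{a,b,c},X)$ never lies in either table, so by Theorem~\ref{t:main} the group $T_{a,b,c}$ is saturated with quotients of type $X$ for \emph{every} hyperbolic triple.

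It remains to handle the finitely many ranks outside $Y$ that do occur: $A_r$ with $8\le r\le 19$, $D_9$, $E_6$, and $G_2$. For $G_2$ only the rigid row $a=2$, $c=5$ of Table~\ref{tab:mainrigid} is relevant, and together with $a\le b\le c$ and $1/a+1/b+1/c<1$ this forces $(a,b,c)\in\{(2,4,5),(2,5,5)\}$; $G_2$ does not appear in Table~\ref{tab:main}. For $E_6$ only the six explicit triples $(2,3,7)$, $(2,3,8)$, $(2,4,5)$, $(2,4,6)$, $(2,4,7)$, $(2,4,8)$ of Table~\ref{tab:main} occur, and $E_6$ is absent from Table~\ref{tab:mainrigid}. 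For $D_9$ only the single triple $(2,3,7)$ occurs. For $A_r$ with $8\le r\le 19$, the rows of Table~\ref{tab:main} carrying an unbounded parameter $c$ all require $r\le 7$ (the $(2,3,c)$ and $(2,4,c)$ rows) or $r\in\{3,4,6\}$ (the $(3,3,c)$ row), hence do not apply; what remains is $(2,3,7)$ together with a subset of $\{(2,3,8),(2,4,5),(2,4,6)\}$ depending on $r$, a finite set, and $A_r$ with $r\ge 5$ is absent from Table~\ref{tab:mainrigid}.

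The only point requiring care is precisely this last bookkeeping: one must check that every \emph{unbounded} family of exceptions in Table~\ref{tab:main} --- the rows carrying a parameter ``$c\ge\cdots$'' or a side condition such as ``$c\ne 15c_1$'' --- occurs only for $X\in Y$, namely $A_r$ with $r\le 7$ (in the $(2,3,c)$, $(2,4,c)$ and $(3,3,c)$ rows), $B_3$, and $D_r$ with $r\in\{4,5\}$. Granting this, for each $X\notin Y$ the set of hyperbolic triples $(a,b,c)$ with $(T_{a,b,c},X)$ in Table~\ref{tab:main} or Table~\ref{tab:mainrigid} is finite, so Theorem~\ref{t:main} yields that $T_{a,b,c}$ is saturated with finite quotients of type $X$ for all remaining hyperbolic triples, which proves the corollary.
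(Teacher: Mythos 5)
Your proposal is correct and follows essentially the same route as the paper, which states the corollary as an immediate consequence of Theorem~\ref{t:main} by inspection of Tables~\ref{tab:main} and~\ref{tab:mainrigid}; your explicit bookkeeping (the unbounded rows occur only for $A_r$ with $r\le 7$, $B_3$, $C_2$, and $D_r$ with $r\in\{4,5\}$, while $G_2$, $E_6$, $D_9$, and $A_r$ with $8\le r\le 19$ contribute only finitely many triples) is exactly the check the paper leaves to the reader.
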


Many of our results are new even in the classical case $(a,b,c)=(2,3,7)$.

\begin{cor}
The triangle group $T_{2,3,7}$ is saturated with finite  quotients of type $X$ for every $X$ which is not in $\{A_r:1\leq r\leq 19\} \cup \{B_3\}\cup \{C_2\}\cup \{D_r:r=4,5,9\}\cup\{E_6\}$. In particular, it is saturated with finite quotients of type $E_8$.
\end{cor}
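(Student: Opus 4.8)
The plan is to read the corollary off Theorem~\ref{t:main} by specializing the triple to $(a,b,c)=(2,3,7)$. By that theorem, $T_{2,3,7}$ is saturated with finite quotients of type $X$ for every irreducible Dynkin diagram $X$ such that the pair $(T_{2,3,7},X)$ does \emph{not} occur in Table~\ref{tab:main} or Table~\ref{tab:mainrigid}; so the task is to list exactly which $X$ do produce such a pair.

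First I would go through Table~\ref{tab:main}. Imposing $a=2$, $b=3$, $c=7$, the only rows that survive are the $A_r$ row ``$(2,3,7)$, $5\leq r\leq 19$'', the $B_3$ row ``$(2,3,c)$, $c\geq 7$'', the $D_r$ row ``$(2,3,7)$, $r\in\{4,5,9\}$'', and the $E_6$ row ``$(2,3,7)$''; every other row forces $a=3$, or $b\in\{4,5\}$, or $c\geq 8$ (in various strengthened forms), none of which is compatible with $(2,3,7)$. Next I would go through Table~\ref{tab:mainrigid}: the conditions for $A_1$ (``any''), $A_2$ (``$a=2$''), $A_3$ and $A_4$ (``$a=2,\ b=3$''), and $C_2$ (``$b=3$'') are all met by $(2,3,7)$, while the condition for $G_2$ (``$a=2,\ c=5$'') is not, since $c=7$.

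Assembling the two lists, the pairs $(T_{2,3,7},X)$ appearing in the tables are exactly those with $X\in\{A_r:1\leq r\leq 19\}\cup\{B_3\}\cup\{C_2\}\cup\{D_r:r=4,5,9\}\cup\{E_6\}$, and Theorem~\ref{t:main} then yields saturation for $T_{2,3,7}$ with quotients of type $X$ for every $X$ outside this set; since $E_8$ lies outside it, the last assertion follows (it could also be extracted from Corollary~\ref{cor:first}(ii), as $E_8\notin Y$). The only point requiring care is the bookkeeping: I would re-check each row of both tables against $a=2$, $b=3$, $c=7$ to be sure no specializing row has been overlooked — that is essentially the entire proof.
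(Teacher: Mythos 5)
Your proposal is correct and is exactly the paper's (implicit) argument: the corollary is read off Theorem~\ref{t:main} by checking each row of Tables~\ref{tab:main} and~\ref{tab:mainrigid} against $(a,b,c)=(2,3,7)$, and your bookkeeping reproduces precisely the excluded set $\{A_r:1\leq r\leq 19\}\cup\{B_3\}\cup\{C_2\}\cup\{D_4,D_5,D_9\}\cup\{E_6\}$, with $E_8$ outside it.
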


This answers a question we were asked by Guralnick. In fact, as already seen in Corollary \ref{cor:first}(ii), we have even more.

\begin{cor}
Every hyperbolic triangle group is saturated with finite  quotients of type $E_7$ and $E_8$.
\end{cor}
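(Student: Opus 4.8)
The plan is to deduce this immediately from Theorem \ref{t:main}: the only thing to check is that the pair $(T,X)$ never occurs in Table \ref{tab:main} or Table \ref{tab:mainrigid} when $X\in\{E_7,E_8\}$. Scanning the left-hand column of Table \ref{tab:main}, the types that appear there are $A_r$ (for $3\le r\le 19$), $B_3$, $D_r$ (for $r\in\{4,5,9\}$) and $E_6$; in particular neither $E_7$ nor $E_8$ appears, for any hyperbolic triple $(a,b,c)$. Likewise the rigid table, Table \ref{tab:mainrigid}, lists only $A_1$, $A_2$, $A_3$, $A_4$, $C_2$ and $G_2$, so again $E_7$ and $E_8$ are absent. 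Hence Theorem \ref{t:main} applies with no exceptions in these two types, which is exactly the assertion. (Equivalently, one observes $E_7,E_8\notin Y$ and quotes Corollary \ref{cor:first}(ii).)

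Of course the substance of the argument is entirely contained in Theorem \ref{t:main} itself; the present statement is a corollary precisely because the deformation machinery of \S\ref{s:nonso3dense}, \S\ref{s:bibi} and \S\ref{s:altm} has already been shown to produce, for every hyperbolic triple $(a,b,c)$, a Zariski-dense and non-locally-rigid representation $\rho\colon T\to\uG(\C)$ with $\uG$ of type $E_7$ or $E_8$. So the only ``work'' here is the bookkeeping of checking that these two types evade every line of the two exception tables, which is clear since all tabulated exceptions are confined to classical types of bounded rank together with $G_2$ and $E_6$.

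If instead one wanted a self-contained account not invoking the tables, the key step would be to exhibit the required representations directly. For $E_8$ (and similarly $E_7$) one would use the $\PGL_2$-based ladder of \S\ref{s:nonso3dense}: start from a Zariski-dense homomorphism $T\to\PGL_2(\R)$, which exists for every hyperbolic triangle group since every Fuchsian group embeds densely in $\PGL_2$, compose with a suitable homomorphism $\PGL_2\to\uG$ into a real form of $E_8$, and verify that the resulting representation is not locally rigid, i.e. $\dim H^1(T,\frakg)>0$, via the dimension and character computations of that section. The main obstacle in such a direct approach, and the reason the general theorem is delicate, is controlling $H^1$: one must show the relevant cohomology is nonzero, which for the exceptional types of large rank holds with room to spare, whereas it is exactly the borderline small-rank cases that populate Table \ref{tab:main}. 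Given Theorem \ref{t:main}, however, none of this needs to be redone and the corollary follows in one line.
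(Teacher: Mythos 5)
Your argument is correct and coincides with the paper's: the corollary is read off directly from Theorem \ref{t:main} (equivalently Corollary \ref{cor:first}(ii)) by observing that neither $E_7$ nor $E_8$ occurs in Table \ref{tab:main} or Table \ref{tab:mainrigid}. The additional sketch of a direct deformation argument is unnecessary but does not detract from the verification.
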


\begin{ack}
The authors are grateful to the ERC, ISF and NSF for their support.
\end{ack}

\section{Preliminary results}

This section consists  of some preliminary results on deformation theory of hyperbolic triangle groups and on saturation of hyperbolic triangle groups by finite quotients of a given type. For  more details, see \cite{LLM1}.   
 
Let $T=T_{a,b,c}$ be a hyperbolic triangle group and $\uG$ be a simple algebraic group over $\mathbb{C}$ of type $X$. If $\rho \in \Hom(T,\uG(\C))=\Hom(T,\uG)(\C)$, then $T$ acts on the Lie algebra $\frak{g}$ of $\uG$ via ${\rm Ad}\circ \rho$, where ${\rm Ad}:\uG\rightarrow {\rm Aut}(\frak{g})$ denotes the adjoint representation of $\uG$. To avoid confusion we will sometimes write ${\rm Ad}\circ \rho\mid_{\frak{g}}$ for the action of $T$ on $\frak{g}$ via ${\rm Ad}\circ\rho$. We let
$Z^1(T,{\rm Ad}\circ \rho)$ (respectively, $B^1(T,{\rm Ad}\circ\rho)$) be the corresponding  space of 1-cocycles (respectively, 1-coboundaries) and set $$H^1(T,{\rm Ad}\circ \rho)=Z^1(T,{\rm Ad}\circ \rho)/B^1(T,{\rm Ad}\circ \rho).$$

The following result is due to Weil (see \cite{Weil}). In the statement, for $t \in \{x,y,z\}$, $\frak{g}^t$ denotes the fixed point space of $t$ in $\frak{g}$ (under the action ${\rm Ad}\circ \rho$).

\begin{thm}\label{t:weil}
The following assertions hold:
\begin{enumerate}[(i)]
 \item The space $Z^1(T,{\rm Ad}\circ \rho)$ is the Zariski tangent space at $\rho$ in $\Hom(T,\uG)$  and $$\dim Z^1(T,{\rm Ad}\circ \rho)= 2\dim \frak{g}+i^*-(\dim \frak{g}^x+\dim \frak{g}^y+\dim \frak{g}^z)$$
where $i$ and $i^*$ denote the dimensions of the space of invariants of ${\rm Ad}\circ\rho$ and $({\rm Ad}\circ \rho)^*$ on $\frak{g}$ and $\frak{g}^*$, respectively.
\item We have $$\dim H^1(T,{\rm Ad}\circ \rho)= \dim \frak{g}+i+i^*-(\dim \frak{g}^x+\dim \frak{g}^y+\dim \frak{g}^z).$$
\item If $H^1(T,{\rm Ad}\circ \rho)=0$ then $\rho$ is locally rigid as an element of $\Hom(T,\uG)$. (i.e. there exists a neighborhood of $\rho$ in which every element is obtained from $\rho$ by conjugation by an element of $\uG$.)
\item If $({\rm Ad}\circ \rho)^*$ has no (nontrivial) invariants on the dual $\frak{g}^*$ of $\frak{g}$, then $i=0$ and $\rho$ is  a nonsingular point  of $\Hom(T,\uG)$.
\end{enumerate}
\end{thm}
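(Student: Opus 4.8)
The plan is to follow Weil and organise everything around the cohomology $H^\bullet(T,\frak{g})$ of $T$ with coefficients in $\frak{g}$ under $\Ad\circ\rho$. For the first assertion of (i) I would use the standard description of the Zariski tangent space via points over the dual numbers $\C[\epsilon]/(\epsilon^2)$: a tangent vector at $\rho$ is a homomorphism $\rho_\epsilon\colon T\to\uG(\C[\epsilon]/(\epsilon^2))$ with $\rho_\epsilon\equiv\rho\pmod\epsilon$, and writing $\rho_\epsilon(t)=(1+\epsilon\,c(t))\rho(t)$ with $c(t)\in\frak{g}$ the requirement that $\rho_\epsilon$ be multiplicative becomes exactly the cocycle identity $c(st)=c(s)+(\Ad\circ\rho)(s)c(t)$. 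This gives $T_\rho\Hom(T,\uG)=Z^1(T,\Ad\circ\rho)$ for any finitely generated $T$.

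For the dimension formulas in (i) and (ii) I would compute $H^\bullet(T,\frak{g})$ from the geometry of the triangle orbifold. Choosing a fundamental hyperbolic triangle yields a $T$-equivariant CW structure on $\mathbb{H}^2$ in which the only nontrivial cell stabilisers are the finite cyclic groups $\langle x\rangle$, $\langle y\rangle$, $\langle z\rangle$; since $\Q[\Z/n]$ is semisimple, the resulting chain complex is still a projective resolution of the trivial $\Q[T]$-module, and its $\frak{g}$-valued cochain complex reads
$$0\longrightarrow \frak{g}^x\oplus\frak{g}^y\oplus\frak{g}^z\oplus\frak{g}^{\,\oplus(v-3)}\longrightarrow\frak{g}^{\,\oplus e}\longrightarrow\frak{g}^{\,\oplus f}\longrightarrow0,$$
where $v,e,f$ are the numbers of vertices, edges and faces of the quotient. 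That quotient is topologically a sphere, so $v-e+f=2$ and hence $\chi(T,\frak{g})=-\dim\frak{g}+\dim\frak{g}^x+\dim\frak{g}^y+\dim\frak{g}^z$. On the other hand $H^0(T,\frak{g})=\frak{g}^T$ has dimension $i$, and Poincar\'e duality for the closed oriented $2$-orbifold $\mathbb{H}^2/T$ identifies $H^2(T,\frak{g})$ with $H_0(T,\frak{g})$, of dimension $\dim(\frak{g}^*)^T=i^*$. Subtracting gives $\dim H^1(T,\Ad\circ\rho)=\dim\frak{g}+i+i^*-(\dim\frak{g}^x+\dim\frak{g}^y+\dim\frak{g}^z)$, which is (ii); and since $B^1(T,\Ad\circ\rho)\cong\frak{g}/\frak{g}^T$ has dimension $\dim\frak{g}-i$, adding $\dim B^1$ back to $\dim H^1$ yields the formula for $\dim Z^1$ in (i).

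For (iii) I would note that conjugation defines a morphism $\uG\to\Hom(T,\uG)$, $g\mapsto g\rho(\cdot)g^{-1}$, whose differential at the identity has image precisely $B^1(T,\Ad\circ\rho)$; thus the orbit of $\rho$ is a smooth locally closed subvariety with tangent space $B^1$ at $\rho$. If $H^1=0$ then $B^1=Z^1=T_\rho\Hom(T,\uG)$, and because $\uG$ is connected it preserves every irreducible component of $\Hom(T,\uG)$ through $\rho$; a dimension comparison then forces the orbit to be open and dense in each such component, so a neighbourhood of $\rho$ consists of $\uG$-conjugates of $\rho$, which is local rigidity. For (iv): the Killing form gives a $T$-module isomorphism $\frak{g}\cong\frak{g}^*$, so $i=\dim\frak{g}^T=\dim(\frak{g}^*)^T=i^*$, and the hypothesis forces $i=i^*=0$; in particular $H^2(T,\frak{g})=0$, the deformation problem at $\rho$ is unobstructed, and therefore $\dim_\rho\Hom(T,\uG)=\dim Z^1=\dim T_\rho\Hom(T,\uG)$, i.e.\ $\rho$ is a nonsingular point.

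The only step that is not bookkeeping is the identification of $H^2(T,\frak{g})$---equivalently, of the cokernel of the linearised relations---with $(\frak{g}^*)^T$: for a general finitely presented group the defining relations need not be as independent as possible, and what makes the count exact here is that $T$ is a cocompact Fuchsian group, so that over $\Q$ it enjoys the (co)homological duality of a closed oriented surface group, modulo the harmless torsion contributed by the three cone points. The unobstructedness invoked in (iv) is the other place one must be slightly careful, but it is the usual criterion ``$H^2=0\Rightarrow$ smooth'' for representation varieties. Everything else is formal.
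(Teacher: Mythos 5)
The paper does not actually prove this statement: it is quoted as Weil's theorem with a citation to \cite{Weil}, so there is no internal proof to compare against. Your reconstruction is essentially the classical argument and is correct in all four parts: the dual-numbers computation identifying the tangent space of the representation scheme with $Z^1$; the computation of $\chi(T,\mathfrak{g})$ from a $T$-CW structure on $\mathbb{H}^2$ (your observation that the induced modules $\mathbb{Q}[T/T_\sigma]$ are projective in characteristic zero, so the cellular complex is a projective resolution and the cochain groups are the fixed spaces $\mathfrak{g}^x,\mathfrak{g}^y,\mathfrak{g}^z$, is exactly the right justification); Poincar\'e duality over $\mathbb{Q}$ for the cocompact Fuchsian group giving $\dim H^2=i^*$; self-duality of the adjoint module via the Killing form (legitimate here since $\uG$ is simple) giving $i=i^*$ in (iv); and the standard obstruction-theoretic bound $\dim_\rho\Hom\ge\dim Z^1-\dim H^2$ giving nonsingularity when $H^2=0$.

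One step in (iii) is stated too loosely. From $T_\rho O=B^1=Z^1\supseteq T_\rho\Hom(T,\uG)$ you cannot directly assert that the orbit is ``open and dense in each irreducible component through $\rho$'': the orbit is irreducible, so a priori it sits inside one component, and a second, lower-dimensional component through $\rho$ would contain points arbitrarily close to $\rho$ that are not conjugates, defeating local rigidity. The standard repair is immediate and worth making explicit: since $\dim_\rho\Hom\le\dim T_\rho\Hom\le\dim B^1=\dim O\le\dim_\rho\Hom$, the embedding dimension equals the local dimension, so the local ring at $\rho$ is regular; hence $\Hom(T,\uG)$ is smooth at $\rho$ with a \emph{unique} component through $\rho$, in which the orbit (open in its closure) contains a neighborhood of $\rho$. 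With that sentence added, your proof is complete and is, in substance, Weil's original argument.
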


\begin{cor}\label{c:goingmaxtofull}
Let $T=T_{a,b,c}$ be a hyperbolic triangle group and $\uG$ be a simple algebraic group over $\mathbb{C}$.
 Suppose  $\rho_0: T \rightarrow \uG$ is such that ${\rm Ad}\circ\rho_0$   has  no invariants on the Lie algebra $\frak{g}$ of $\uG$ and $\rho: T \rightarrow \uG$ is such that  the closure of its image is a maximal subgroup of $\uG$ and has finite center, or is $\uG$. Then the following assertions hold:
\begin{enumerate}[(i)]
\item The representations $\rho_0$ and $\rho$ are nonsingular  in $\Hom(T,\uG)$, and  ${\rm Ad}\circ \rho_0$  and its dual (respectively,  ${\rm Ad \circ \rho}$ and its dual) have no invariants on $\frak{g}$ and $\frak{g}^*$, respectively.
\item If furthermore $\rho$ is in the irreducible component of $\Hom(T,\uG)$ containing $\rho_0$, then  $$\dim H^1(T,{\rm Ad}\circ \rho\mid_{\frak{g}})=\dim H^1(T,{\rm Ad}\circ \rho_0\mid_{\frak{g}}).$$
\end{enumerate}
\end{cor}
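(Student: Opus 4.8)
\emph{Proof strategy.} The plan is to deduce both parts from Weil's Theorem~\ref{t:weil}, the only substantive input being that $\g$ (as a $T$-module via $\Ad\circ\rho_0$, resp.\ $\Ad\circ\rho$) has no invariants in each of the two cases. I would start from the elementary remark that the Killing form is a $\uG$-equivariant isomorphism $\g\cong\g^*$, hence for any $\sigma\in\Hom(T,\uG)$ also a $T$-equivariant one; consequently the invariant dimensions $i$ (of $\Ad\circ\sigma$ on $\g$) and $i^*$ (of $(\Ad\circ\sigma)^*$ on $\g^*$) always coincide. For $\rho_0$ the hypothesis thus gives $i=i^*=0$, so $\Ad\circ\rho_0$ and its dual have no invariants on $\g$ and $\g^*$, and by Theorem~\ref{t:weil}(iv) $\rho_0$ is a nonsingular point of $\Hom(T,\uG)$. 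Hence (i) holds for $\rho_0$, and it remains to treat $\rho$.

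Put $H=\overline{\rho(T)}$. Because the fixed locus of a linear action is Zariski closed, $\g^T=\g^H$ (with $T$ acting through $\Ad\circ\rho$), so it suffices to show $\g^H=0$. If $H=\uG$ this is clear, since $\g^{\uG}=0$ ($\g$ being a nontrivial irreducible $\uG$-module). Otherwise $H$ is a maximal proper closed subgroup with finite center. Using the characteristic-zero identification $\g^H=\Lie\,C_{\uG}(H)$, I would assume $\dim C_{\uG}(H)>0$ and seek a contradiction: writing $C=C_{\uG}(H)$, the intersection $C\cap H=Z(H)$ is finite while $\dim C>0$, so $C\not\subseteq H$; since $H$ normalizes $C$, the closed subgroup $HC$ strictly contains $H$, and maximality forces $HC=\uG$. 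Then $\g=\Lie H+\Lie C$ with $[\Lie H,\Lie C]=0$, so $\Lie C$ is a nonzero ideal of the simple Lie algebra $\g$; hence $\Lie C=\g$, i.e.\ $C=\uG$ and $H\subseteq Z(\uG)$ --- impossible for a maximal subgroup of a nonabelian simple group. Therefore $\g^H=0$, so $i=i^*=0$ for $\rho$ too, and Theorem~\ref{t:weil}(iv) shows $\rho$ is nonsingular; this finishes (i).

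For (ii), recall that $Z^1(T,\Ad\circ\sigma)$ is the Zariski tangent space to $\Hom(T,\uG)$ at $\sigma$ (Theorem~\ref{t:weil}(i)). At a nonsingular point of a finite-type scheme this dimension equals the local dimension, which equals the dimension of the (unique) irreducible component passing through that point. Since $\rho_0$ and $\rho$ are both nonsingular by (i) and lie in the same irreducible component, $\dim Z^1(T,\Ad\circ\rho_0)=\dim Z^1(T,\Ad\circ\rho)$. Subtracting the formulas of Theorem~\ref{t:weil}(i) and (ii) gives, for every $\sigma$, $\dim H^1(T,\Ad\circ\sigma)=\dim Z^1(T,\Ad\circ\sigma)-\dim\g+i$; applying this to $\rho_0$ and $\rho$, each of which has $i=0$, yields $\dim H^1(T,\Ad\circ\rho_0\mid_{\g})=\dim H^1(T,\Ad\circ\rho\mid_{\g})$, as claimed.

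The only non-formal step is the vanishing $\g^H=0$: one has to deal with a possibly disconnected (even finite) maximal $H$, identify $\Lie\,C_{\uG}(H)$ correctly in characteristic zero, and pinpoint the role of the finite-center hypothesis --- it is exactly what rules out $C_{\uG}(H)\subseteq H$ once $C_{\uG}(H)$ is positive-dimensional. I expect the rest to be routine bookkeeping with Weil's dimension formulas.
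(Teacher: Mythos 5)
Your proposal is correct. For part (i) it follows essentially the paper's route: self-duality of the adjoint representation (you invoke the Killing form, the paper just cites self-duality in characteristic zero) reduces everything to showing $\g^{\uH}=0$ for $\uH=\overline{\rho(T)}$, and both you and the paper get this from maximality plus finite center by playing $\uH$ against its centralizer $Z_{\uG}(\uH)$; the paper asserts tersely that $Z_{\uG}(\uH)\uH=\uH$, hence $Z_{\uG}(\uH)=Z(\uH)$ is finite, while you run the contradiction through $\uH Z_{\uG}(\uH)=\uG$ and the observation that $\Lie Z_{\uG}(\uH)$ would then be a nonzero ideal of the simple algebra $\g$ --- the same idea with the details the paper leaves implicit made explicit. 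Part (ii), however, is where you genuinely diverge. The paper writes out Weil's formula $\dim H^1=\dim\g-(\dim\g^x+\dim\g^y+\dim\g^z)$ for both $\rho_0$ and $\rho$ and then invokes the fact that two representations lying in a common irreducible component of $\Hom(T,\uG)$ have conjugate restrictions to each cyclic subgroup $\langle x\rangle,\langle y\rangle,\langle z\rangle$, so the three fixed-space dimensions agree term by term. You instead never compare fixed spaces: since $\rho_0$ and $\rho$ are both nonsingular points of the same component, the tangent-space dimensions $\dim Z^1$ at the two points both equal the dimension of that component, and since $i=0$ at both points the identity $\dim H^1=\dim Z^1-\dim\g$ (equivalently $\dim B^1=\dim\g$) transfers this equality to $H^1$. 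Your argument is the same bookkeeping the paper itself performs inside the proof of Theorem \ref{t:ll}, and it is arguably cleaner here because it needs no rigidity statement about finite-order elements along a component; the paper's term-by-term comparison has the side benefit of producing the explicit formula for $\dim H^1$ that is used repeatedly in the later computations. Both arguments are complete and correct.
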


\begin{proof}
Let $\uH$  be the closure of the image of $\rho:T \rightarrow \uG$.
If $\uH$ is a  maximal subgroup of $\uG$ with finite center, $Z_{\uG}(\uH)\uH$ must equal $\uH$ which means that $Z_{\uG}(\uH)=Z(\uH)$ is finite. Also since $\uG$ is simple, $Z_{\uG}(\uG)$ is also finite.  It follows that ${\rm Ad}\circ \rho$ has no invariants on $\frak{g}$.  As  the adjoint representation of a simple group in characteristic zero is self-dual, we deduce that $({\rm Ad}\circ \rho)^*$ has no invariants on $\frak{g}^*$. Hence  by Theorem \ref{t:weil}(iv), $\rho$ is nonsingular and by Theorem \ref{t:weil}(ii)
$$ \dim H^1(T,{\rm Ad}\circ \rho\mid_{\frak{g}})=\dim \frak{g} - (\dim \frak{g}^{{\rm Ad}\circ\rho(x)}+ \dim \frak{g}^{{\rm Ad}\circ\rho(y)}+\dim \frak{g}^{ {\rm Ad}\circ\rho(z)}).$$
On the other hand,  ${\rm Ad}\circ\rho_0$ is also self-dual (since $\uG$ is simple and defined over $\C$). Moreover  by assumption it  has  no invariants on $\frak{g}$, and so its dual has no invariants on $\frak{g}^*$. Hence, again by Theorem \ref{t:weil},  $\rho_0$ is nonsingular and
$$ \dim H^1(T,{\rm Ad}\circ \rho_0\mid_{\frak{g}})=\dim \frak{g}- (\dim \frak{g}^{{\rm Ad}\circ\rho_0(x)}+\dim \frak{g}^{{\rm Ad}\circ\rho_0(y)}+\dim \frak{g}^{{\rm Ad}\circ\rho_0(z)}).$$
Since the restrictions of two representations in a common irreducible component of $\Hom(T,\uG)$ 
to a cyclic subgroup of $T$ are conjugate, we get
 $\dim \frak{g}^{{\rm Ad}\circ\rho(x)}=\dim \frak{g}^{{\rm Ad}\circ\rho_0(x)}$ (and similarly for $y$ and $z$); this yields the result.
\end{proof}

For a natural number $m$, we let $\delta_m^{\uG}$ denote the dimension of the subvariety $\uG_{[m]}$ of $\uG$ consisting of elements of order dividing $m$. Since $\uG$ is defined over $\mathbb{C}$, we have (with the notation of Theorem \ref{t:weil})

\begin{equation}\label{e:lietogroupfull}
\codi \frak{g}^x \leq \delta_a^{\uG}, \quad \codi \frak{g}^y \leq \delta_b^{\uG} \quad \textrm{and} \quad \codi \frak{g}^z \leq \delta_c^{\uG}.
\end{equation}


In \cite[Theorem 4.1]{LLM1} we gave the following criterion for $T$ to be saturated with finite quotients of type $X$.

\begin{thm}\label{t:key}
The hyperbolic triangle group $T$ is saturated with finite quotients of type $X$ if and only if there exist a  simple algebraic group $\uG$ over $\C$ of type $X$ and a  Zariski dense representation $\rho$ in $\Hom(T,\uG)$ which is not locally rigid (i.e. $\dim H^1(T,{\rm Ad}\circ\rho)>0$).
\end{thm}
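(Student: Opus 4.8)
The plan is to deduce both implications from the geometry of the representation scheme $\Hom(T,\uG)$ — of finite type over $\Z$ — combined with Weil's local analysis (Theorem \ref{t:weil}), reduction modulo a prime $p$, the Lang--Weil point-counting estimates, and the classification of subgroups of simple algebraic groups and of finite groups of Lie type. The organising point is that at a Zariski-dense $\rho$ the condition $\dim H^1(T,\Ad\circ\rho)>0$ is equivalent to the irreducible component of $\Hom(T,\uG)$ through $\rho$ having dimension strictly larger than $\dim\uG$, and it is this excess dimension that survives reduction mod $p$ and forces Zariski-dense representations over large finite fields, hence surjections onto the simple groups $X(q)$.

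For ``$\Leftarrow$'', take a Zariski-dense $\rho\colon T\to\uG(\C)$ with $\dim H^1(T,\Ad\circ\rho)>0$. Since $\rho$ is dense and $\uG$ simple, $\Ad\circ\rho$ is self-dual with no invariants on $\g$, so by Theorem \ref{t:weil}(ii),(iv) the point $\rho$ is nonsingular, lying on a unique irreducible component $Y$ with $\dim Y=\dim\uG+\dim H^1(T,\Ad\circ\rho)\ge\dim\uG+1$. Moving $\rho$ to a nearby $\overline{\Q}$-point in the (nonempty open) locus of smooth Zariski-dense representations of $Y$, one may assume $Y$ is defined over a number field; passing to its $\mathrm{Gal}(\overline{\Q}/\Q)$-orbit and spreading out gives a flat model $\mathcal Y$ over $\Z[1/N]$ whose geometric fibres all have dimension $\ge\dim\uG+1$. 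Next one uses the structure theory: the non-Zariski-dense locus in $\Hom(T,\uG)$ is closed and has a model of bounded complexity over $\Z[1/N]$ (finitely many conjugacy classes of positive-dimensional maximal closed subgroups of $\uG$, and the Larsen--Pink theorem for the finite-image cases), and since $\rho$ is dense, $Y$ is not contained in it. Reducing at a prime $p\nmid N$ gives a geometric component $\tilde Y$ of $\mathcal Y_{\overline{\F}_p}$ of dimension $\ge\dim\uG+1$, defined over $\F_{p^{e}}$ with $e$ bounded independently of $p$ and equal to $1$ for a positive-density set of $p$ (Chebotarev), and meeting the non-dense locus in dimension $\le\dim\uG$. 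The representations that are non-dense, or conjugate into $\uG(\F_{p^m})$ for a proper divisor $m\mid e\ell$, or whose image lies in a twisted finite group of Lie type, together form a subvariety of $\tilde Y$ of dimension $\le\dim\uG\le\dim\tilde Y-1$; Lang--Weil applied to $\tilde Y$ and to this subvariety, with constants depending only on the bounded complexity of $\mathcal Y$, then produces — for all $p$ beyond a fixed $p_0$ and every $\ell\ge1$ — a representation $\sigma_{p,\ell}\in\tilde Y(\F_{p^{e\ell}})$ outside it. By the structure of finitely generated Zariski-dense subgroups of $\uG(\overline{\F}_p)$ (plus a trace-field computation excluding twisted forms), the image of $\sigma_{p,\ell}$ contains a conjugate of $X(p^{e\ell})$, giving $T\twoheadrightarrow X(p^{e\ell})$; when $e=1$ this reads $T\twoheadrightarrow X(p^\ell)$. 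Thus $T$ is saturated with finite quotients of type $X$.

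For ``$\Rightarrow$'', fix a large prime $p$ in the positive-density set supplied by saturation, so there are surjections $T\twoheadrightarrow X(p^\ell)\hookrightarrow\uG(\overline{\F}_p)$ for every $\ell$, i.e. Zariski-dense points $\psi_\ell$ of $\Hom(T,\uG)_{\overline{\F}_p}$ with images of pairwise distinct orders $|X(p^\ell)|$. As this scheme has finitely many components, infinitely many $\psi_\ell$ lie on one component $Y_0$; a component of dimension $\dim\uG$ carrying a dense representation is the closure of a single $\uG$-orbit, so all its dense representations share one image order, forcing $\dim Y_0\ge\dim\uG+1$. Pick $\psi=\psi_\ell\in Y_0$ with $\ell$ large: for $p$ large, $\psi$ is a smooth point, $\Ad\circ\psi$ is self-dual with $\g^{\psi(T)}=0$, and $\dim H^1(T,\Ad\circ\psi)=\dim_\psi Y_0-\dim\uG\ge1$. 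Since $T$ is the fundamental group of a closed hyperbolic $2$-orbifold, a torsion-free finite-index surface subgroup together with transfer gives, for $p$ large, $H^2(T,\g)\cong\g_T\cong(\g^T)^{*}=0$, so the deformation functor of $\psi$ is unobstructed: $\Hom(T,\uG)$ is smooth over $W(\overline{\F}_p)$ at $\psi$, and $\psi$ lifts to $\rho\in\Hom(T,\uG)(W(\overline{\F}_p))$ whose characteristic-zero specialisation lies on a component of dimension $\dim\uG+\dim H^1(T,\Ad\circ\psi)\ge\dim\uG+1$. Finally $\rho$ is Zariski-dense — the flat closure in $\uG_{W(\overline{\F}_p)}$ of the Zariski closure of its characteristic-zero image has special fibre containing $\overline{\psi(T)}=\uG$, hence is $\dim\uG$-dimensional and equals $\uG$ — and, as a smooth point of a component of dimension $\ge\dim\uG+1$, satisfies $\dim H^1(T,\Ad\circ\rho)\ge1$. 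This is the desired representation.

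The main obstacle lies in ``$\Leftarrow$'' and is a matter of subgroup theory rather than deformation theory: one must establish that the non-Zariski-dense locus of $\Hom(T,\uG)$ is closed with a model of bounded complexity over $\Z[1/N]$ — so that ``$Y$ is not contained in it'' persists after reduction modulo $p$, with constants uniform in $p$ — and one must be able to upgrade a Zariski-dense representation into $\uG(\F_{p^{e\ell}})$ to a genuine surjection onto the \emph{untwisted} simple group $X(p^{e\ell})$, for every $\ell$ simultaneously, which forces one to control the field of definition of the image and to exclude the twisted finite groups of Lie type. Both points rest on the classification of maximal subgroups of simple algebraic groups and on the Larsen--Pink description of finite subgroups; by contrast the cohomological ingredients (Weil's theorem, the vanishing $H^2(T,\g)=0$, and the Lang--Weil estimates) are routine.
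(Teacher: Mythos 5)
This theorem is quoted by the paper from Part I (\cite[Theorem 4.1]{LLM1}), so there is no in-paper proof to compare against; judging your argument on its own terms, the ``$\Leftarrow$'' half is a reasonable outline of the standard specialisation/Lang--Weil strategy (spreading out, excluding the non-dense locus, subfield and twisted images), although as you yourself note it defers the genuinely hard points --- uniform-in-$p$ complexity bounds for the non-dense locus and the upgrade from ``Zariski dense in $\uG(\F_{p^{e\ell}})$'' to ``surjects onto the untwisted $X(p^{e\ell})$'' --- to citations rather than arguments.

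The genuine gap is in ``$\Rightarrow$''. You treat the reductions $\psi_\ell\colon T\twoheadrightarrow X(p^\ell)\subset \uG(\overline{\F}_p)$ as ``Zariski-dense points'', but a finite subgroup is Zariski \emph{closed}, so $\overline{\psi_\ell(T)}=X(p^\ell)\neq \uG$. The orbit-counting step survives this slip (all you need is that $Z_{\uG}(X(p^\ell))$ is finite and that distinct image orders force non-conjugacy, so a component containing infinitely many $\psi_\ell$ has dimension $\geq \dim\uG+1$), but the final step does not: your proof that the characteristic-zero lift $\rho$ is Zariski dense rests precisely on ``the special fibre of the flat closure contains $\overline{\psi(T)}=\uG$'', which is false. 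What the flat closure argument actually gives is a subgroup scheme of $\uG_{\overline{\F}_p}$ of dimension $\dim\overline{\rho(T)}$ whose points contain the finite group $X(p^\ell)$, and a finite group imposes no dimension bound by itself. To close this you need a real additional input --- e.g.\ that for $p$ (and $\ell$) large the adjoint module $\frak{g}$ is irreducible under $X(p^\ell)$, so any $\rho(T)$-invariant subspace of $\frak{g}$ reduces to $0$ or $\frak{g}$, whence $\Lie(\overline{\rho(T)})$ is $0$ or $\frak{g}$, with the finite-image case excluded by Jordan's theorem (a finite linear group in characteristic zero cannot surject onto $X(p^\ell)$ for $\ell$ large); alternatively a Liebeck--Seitz/Larsen--Pink-type statement that $X(p^\ell)$ does not embed in any algebraic group over $\overline{\F}_p$ of dimension $<\dim\uG$. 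Since Zariski density of the lift is exactly what the theorem requires you to produce, this missing argument is the heart of the matter, not a technicality; the surrounding deformation-theoretic steps (vanishing of $H^2$ for large $p$ via transfer to a surface subgroup, smoothness over the Witt vectors, semicontinuity giving $\dim H^1(T,\Ad\circ\rho)\geq 1$) are fine once density is secured.
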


Recall the definition of the principal homomorphism. For every simple algebraic group $\uG$ over $\mathbb{C}$, there is, up to conjugation, a unique homomorphism ${\rm SL}_2\rightarrow \uG$---called the principal homomorphism---sending every nontrivial unipotent to a regular unipotent. The induced homomorphism ${\rm SL}_2\rightarrow {\rm Ad}(\uG)$ factors through ${\rm PGL}_2$.  Since $T$ is Zariski dense in ${\rm PGL}_2(\mathbb{C})$, if $\uG$ is of adjoint type we get an induced representation $\rho_0^{\uG}:T \rightarrow {\rm PGL}_2\rightarrow \uG$.  
 
The following result is given in \cite[\S2]{LLM1}.

\begin{lem}\label{l:prinval}
 Let $\uG=X(\mathbb{C})$ be a simple adjoint algebraic group  over $\mathbb{C}$ of type $X$ and rank $r$, and $\rho_0^{\uG}: T \rightarrow \uG$ be the representation induced from the principal homomorphism ${\rm PGL}_2 \rightarrow \uG$. Write $n_1=a$, $n_2=b$ and $n_3=c$.
The following assertions hold:
\begin{enumerate}[(i)]
\item The spaces of invariants of ${\rm Ad}\circ \rho_0^{\uG}$ (on $\frak{g}$) and $({\rm Ad}\circ \rho_0^{\uG})^*$ (on $\frak{g}^*$) are trivial.
\item For $x,y,z$ acting on $\frak{g}$ via ${\rm Ad}\circ \rho_0^{\uG}$, we have $$\dim \frak{g}^x= \sum_{j=1}^r 1+2 \left\lfloor\frac{e_j}{n_1}\right\rfloor, \quad  \frak{g}^y=\sum_{j=1}^r 1+2 \left\lfloor\frac{e_j}{n_2}\right\rfloor \quad  \textrm{and} \quad \frak{g}^z= \sum_{j=1}^r 1+2 \left\lfloor\frac{e_j}{n_3}\right\rfloor.$$
where  $e_1,\dots, e_r$ are the exponents of $\uG$.
\item In particular, $$\dim H^1(T,{\rm Ad}\circ \rho_0^{\uG})=\dim \uG - \sum_{k=1}^3\sum_{j=1}^r \left(1+2\left\lfloor \frac{e_j}{n_k}\right\rfloor \right).$$

 \end{enumerate}
\end{lem}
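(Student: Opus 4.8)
The plan is to read everything off from the decomposition of $\mathfrak{g}$ as a module over the principal $\mathrm{SL}_2$. By a classical result of Kostant, under the principal homomorphism $\mathrm{SL}_2\to\uG$ (sending a nontrivial unipotent to a regular unipotent) the adjoint module decomposes as a direct sum $\mathfrak{g}\cong\bigoplus_{j=1}^{r}V_{2e_j}$, where $e_1,\dots,e_r$ are the exponents of $\uG$ and $V_{2m}$ denotes the irreducible $\mathrm{SL}_2$-module of dimension $2m+1$, whose set of weights is $\{2m,2m-2,\dots,-2m\}=\{2i:-m\le i\le m\}$. In particular every weight occurring in $\mathfrak{g}$ is even, so the action is trivial on the center $\{\pm I\}$ and factors through $\PGL_2$; composing with the Zariski dense embedding $T\hookrightarrow\PGL_2(\C)$, in which $x,y,z$ have orders exactly $a,b,c$, yields $\rho_0^{\uG}$.

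For part (i), since $T$ is Zariski dense in $\PGL_2(\C)$ the subspace of $\mathfrak{g}$ fixed by $\mathrm{Ad}\circ\rho_0^{\uG}(T)$ coincides with the subspace fixed by $\PGL_2$, which by the decomposition above is the sum of those summands $V_{2e_j}$ with $e_j=0$. There are none, since every exponent is $\ge 1$. As the adjoint module of a complex simple Lie algebra is self-dual via the Killing form, the same conclusion holds on $\mathfrak{g}^*$, so both spaces of invariants are trivial; in the notation of Theorem~\ref{t:weil} this says $i=i^*=0$.

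For part (ii), the image of $x$ in $\PGL_2$ is a semisimple element of order $n_1=a$; choose a lift $h=\mathrm{diag}(t,t^{-1})\in\mathrm{SL}_2$ with $t^2$ a primitive $a$-th root of unity. Then $h$ acts on the weight-$2i$ line of each $V_{2e_j}$ by the scalar $(t^2)^i$, which equals $1$ if and only if $a\mid i$. Hence $\dim V_{2e_j}^{x}$ is the number of integers $i$ with $|i|\le e_j$ and $a\mid i$, namely $1+2\lfloor e_j/a\rfloor$, and summing over $j$ gives $\dim\mathfrak{g}^x=\sum_{j=1}^r\bigl(1+2\lfloor e_j/n_1\rfloor\bigr)$. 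The identical argument with $b$ and $c$ in place of $a$ gives the stated formulas for $\dim\mathfrak{g}^y$ and $\dim\mathfrak{g}^z$. Part (iii) is then immediate: substitute $i=i^*=0$ and the formulas from (ii) into the second formula of Theorem~\ref{t:weil}, using $\dim\mathfrak{g}=\dim\uG$.

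The only genuinely substantial ingredient is Kostant's description of $\mathfrak{g}$ as a sum of $\mathrm{SL}_2$-irreducibles indexed by the exponents; everything else is root-of-unity bookkeeping on weight spaces. The single point that needs care is that an element of order $a$ in $\PGL_2$ acts on the weight-$2i$ line by a scalar trivial exactly when $a\mid i$ (rather than $a\mid 2i$), which is precisely why the exponents themselves, and not their doubles, appear inside the floor functions; one should also keep track that the lift $h$ is only well defined up to the center, but since all weights of $\mathfrak{g}$ are even this ambiguity does not affect the count.
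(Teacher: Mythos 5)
Your proof is correct and follows the standard route that the paper itself relies on: the paper gives no argument for this lemma, citing \cite[\S 2]{LLM1}, and the underlying computation there is exactly the one you reconstruct, namely Kostant's decomposition $\frak{g}\cong\bigoplus_{j=1}^r V_{2e_j}$ under the principal ${\rm SL}_2$ followed by counting weight lines fixed by an element of order $n_k$ in ${\rm PGL}_2$, and then feeding $i=i^*=0$ into Theorem~\ref{t:weil}(ii). Your attention to the two small subtleties (that triviality on the weight-$2i$ line requires $a\mid i$ rather than $a\mid 2i$, and that the central ambiguity of the lift is harmless because all weights of $\frak{g}$ are even) is exactly what makes the count come out as $1+2\lfloor e_j/n_k\rfloor$, so nothing is missing.
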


\begin{rmk}\label{r:exp}
Recall (\cite[Planches]{Bourbaki}) that the exponents of the different root systems are as follows:
\begin{equation*}A_r: 1,2,\dots, r; \ B_r,C_r: 1,3,\dots,2r-1;\ D_r: 1,3,\dots,2r-3, r-1; \ E_6:1,4,5,7,8,11;
\end{equation*}
\begin{equation*}
E_7: 1,5,7,9,11,13, 17;\ E_8:1,7,11,13,17,19,23,29;\ F_4:1,5,7,11 ;\ G_2: 1,5.
\end{equation*}
\end{rmk}

Given a simple algebraic group $\uG$ over $\mathbb{C}$, we often obtain a Zariski dense representation $T \rightarrow \uG$ by deforming a representation in $\Hom(T,\uG)$ whose Zariski closure is a maximal subgroup of $\uG$.
More generally, we let $\Gamma$ be a finitely generated group and let $\Epi(\Gamma,\uG)$ denote the Zariski closure in the homomorphism variety $\Hom(\Gamma,\uG)$ of the set of homomorphisms $\rho\colon \Gamma\to\uG(\C)$
such that $\rho(\Gamma)$ is Zariski dense in $\uG$.
We have the following theorem:

\begin{thm}\label{t:ll}  Let $\Gamma$ be a finitely generated group, $\uG$ be a quasisimple algebraic group over $\mathbb{C}$, $\rho_0: \Gamma \rightarrow \uG(\mathbb{C})$ and $\uH$ be  the Zariski closure of $\rho_0(\Gamma)$. Assume
\begin{enumerate}[(a)]
\item $\uH$ is semisimple and connected.
\item $\uH$ is a maximal subgroup of $\uG$.
\item If $\frakg$ is the Lie algebra of $\uG$ (where the action is via ${\rm Ad}\circ \rho_0$), then
$$\dim \Epi(\Gamma,\uH) - \dim \uH < \dim Z^1(\Gamma,\frak{g}) - \dim \uG.$$
\item $\rho_0$ is a nonsingular point of $\Hom(\Gamma,\uH)$ and of $\Hom(\Gamma,\uG)$.
\end{enumerate}
Then $\Hom(\Gamma,\uG)$ has an irreducible component containing $\rho_0$ of dimension  $\dim H^1(\Gamma, \mathfrak{g})+\dim \uG$ with a nonsingular point $\rho$ on it which has a dense image. In particular, we also have $\dim H^1(\Gamma, {\rm Ad}\circ \rho)=\dim H^1(\Gamma, {\rm Ad}\circ \rho_0)$.
\end{thm}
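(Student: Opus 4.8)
The plan is to pin down the irreducible component of $\Hom(\Gamma,\uG)$ containing $\rho_0$, compute its dimension, prove it contains a Zariski-dense representation, and then read off all the assertions. To begin, hypothesis (d) together with the fact that the tangent space to $\Hom(\Gamma,\uG)$ at $\rho_0$ is $Z^1(\Gamma,{\rm Ad}\circ\rho_0)$ (Weil, Theorem~\ref{t:weil}(i)) shows that $\rho_0$ lies on a unique irreducible component $\mathcal C$ of $\Hom(\Gamma,\uG)$, with $\dim\mathcal C=\dim Z^1(\Gamma,\frakg)$; similarly $\rho_0$ lies on a unique component $\mathcal C_{\uH}$ of $\Hom(\Gamma,\uH)$. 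Since $\uH$ is semisimple, connected and maximal in the quasisimple $\uG$, it is self-normalizing ($N_{\uG}(\uH)=\uH$: being a positive-dimensional proper subgroup, $\uH$ is not normal in $\uG$, so maximality forces $N_{\uG}(\uH)=\uH$), and $Z_{\uG}(\uH)=Z(\uH)$ is finite; hence ${\rm Ad}\circ\rho_0$ has no invariants on $\frakg$, so $\dim B^1(\Gamma,\frakg)=\dim\frakg-\dim\frakg^{\Gamma}=\dim\uG$ and $\dim\mathcal C=\dim Z^1(\Gamma,\frakg)=\dim H^1(\Gamma,\frakg)+\dim\uG$.

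Next I would record the conjugation estimate. Let $c\colon\uG\times\Epi(\Gamma,\uH)\to\Hom(\Gamma,\uG)$ be $(g,\sigma)\mapsto g\sigma g^{-1}$ and set $S=\overline{{\rm im}\,c}$; note $\rho_0\in S$. A $\sigma$ with Zariski-dense image in $\uH$ determines $g\uH g^{-1}$, hence $g$ modulo $N_{\uG}(\uH)=\uH$, and $Z_{\uH}(\sigma(\Gamma))=Z(\uH)$ is finite, so the generic fibre of $c$ has dimension $\dim\uH$ and $\dim S\le\dim\uG+\dim\Epi(\Gamma,\uH)-\dim\uH$, which by (c) is strictly less than $\dim Z^1(\Gamma,\frakg)=\dim\mathcal C$. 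Also, a Baire-category argument inside the irreducible variety $\mathcal C_{\uH}$ (over the uncountable field $\C$), using that it contains the dense-image representation $\rho_0$, shows that dense-image representations are dense in $\mathcal C_{\uH}$, whence $\mathcal C_{\uH}\subseteq\Epi(\Gamma,\uH)$ and $\dim\mathcal C_{\uH}\le\dim\Epi(\Gamma,\uH)$.

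The main step is to show $\mathcal C$ contains a Zariski-dense representation; I would argue by contradiction. If it does not, then $\mathcal C$ lies in the locus of representations with non-dense image, which is a countable union of closed sets $\overline{\uG\cdot\Hom(\Gamma,\uM)}$, as $\uM$ runs over representatives of the conjugacy classes of maximal closed subgroups of $\uG$; being irreducible, $\mathcal C\subseteq\overline{\uG\cdot\Hom(\Gamma,\uM)}$ for a single $\uM$. Now I would rule out all $\uM$ except those conjugate to $\uH$. If $\uM$ is finite then $\dim\overline{\uG\cdot\Hom(\Gamma,\uM)}\le\dim\uG<\dim\mathcal C$ (the last inequality from (c) together with $\dim\Epi(\Gamma,\uH)\ge\dim\uH$), impossible. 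If $\uM$ is parabolic then $\uG\cdot\Hom(\Gamma,\uM)$ is already closed (the conjugation bundle $\uG\times^{\uM}\Hom(\Gamma,\uM)\to\Hom(\Gamma,\uG)$ is proper), so $\rho_0(\Gamma)$ lies in a conjugate of $\uM$, forcing $\uH=\overline{\rho_0(\Gamma)}$ into a proper parabolic; by maximality $\uH$ would itself be parabolic, contradicting that $\uH$ is semisimple. If $\uM$ is reductive, a geometric-invariant-theory argument does the job: the orbit $\uG\cdot\rho_0$ is closed (because $\uH$ is reductive), so it lies in the closure of some orbit $\uG\cdot\sigma$ with $\sigma\in\Hom(\Gamma,\uM)$, i.e. $\rho_0$ is a semisimplification of $\sigma$; then $\uH=\overline{\rho_0(\Gamma)}$, being reductive, is conjugate into a Levi factor of the one-parameter degeneration of $\uM$ carrying $\sigma$ to $\rho_0$, hence into $\uM$, and by maximality $\uH$ is conjugate to $\uM$. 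So $\mathcal C\subseteq\overline{\uG\cdot\Hom(\Gamma,\uH)}$, and (arguing as above that $\rho_0$ must be $\uH$-conjugate into $\mathcal C_{\uH}$ and that this singles out that component) even $\mathcal C\subseteq\overline{\uG\cdot\mathcal C_{\uH}}$; therefore $\dim\mathcal C\le\dim\uG+\dim\mathcal C_{\uH}-\dim\uH\le\dim\uG+\dim\Epi(\Gamma,\uH)-\dim\uH<\dim\mathcal C$, a contradiction. Hence $\mathcal C$ meets the dense-image locus, and the Baire argument shows these representations are dense in $\mathcal C$; in particular one of them, $\rho$, is a nonsingular point of $\Hom(\Gamma,\uG)$.

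To conclude: $\rho$ nonsingular on the unique component $\mathcal C$ gives $\dim Z^1(\Gamma,{\rm Ad}\circ\rho)=\dim\mathcal C=\dim Z^1(\Gamma,\frakg)$, and since $\rho(\Gamma)$ is Zariski-dense in the quasisimple $\uG$ the module ${\rm Ad}\circ\rho$ has no invariants, so $\dim H^1(\Gamma,{\rm Ad}\circ\rho)=\dim\mathcal C-\dim\uG=\dim H^1(\Gamma,{\rm Ad}\circ\rho_0)$ and $\dim\mathcal C=\dim H^1(\Gamma,\frakg)+\dim\uG$, as required. I expect the reductive case in the third step to be the main obstacle: upgrading ``$\rho_0$ lies in the closure of the conjugation locus of a maximal reductive subgroup $\uM$'' to ``$\uM$ contains a conjugate of $\uH$'', since that conjugation locus need not itself be closed; the tools here are the closedness of the $\uG$-orbit of a representation with reductive image and the structure of one-parameter subgroup degenerations.
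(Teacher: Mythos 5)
Your route is genuinely different from the paper's (which gets everything from two black boxes: the Breuillard--Guralnick--Larsen specialization theorem, asserting that the image closure at the generic point of a component contains a copy of the image closure at any special point, and Richardson's rigidity theorem giving finitely many embeddings $\uH\to\uG$ up to conjugacy, followed by the dimension count on the conjugation maps $\uG\times\uY_i\to\Hom(\Gamma,\uG)$). But as written your argument has a genuine gap precisely at the point where the paper uses BGL. You claim that since $\mathcal C_{\uH}$ contains the dense-image point $\rho_0$, a Baire-category argument shows dense-image representations are dense in $\mathcal C_{\uH}$, hence $\mathcal C_{\uH}\subseteq\Epi(\Gamma,\uH)$ and $\dim\mathcal C_{\uH}\le\dim\Epi(\Gamma,\uH)$. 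That inference is not valid: the non-dense locus in $\Hom(\Gamma,\uH)$ is a countable union of \emph{constructible, not closed} sets $\uH\cdot\Hom(\Gamma,\uM')$ ($\uM'$ maximal in $\uH$), and $\rho_0$ avoiding each of these sets does not prevent it from lying in their closures; the Baire argument only yields the dichotomy ``dense-image points are dense in $\mathcal C_{\uH}$, or $\mathcal C_{\uH}\subseteq\overline{\uH\cdot\Hom(\Gamma,\uM')}$ for a single $\uM'$'', and hypothesis (c) gives you no dimension control inside $\uH$ to kill the second alternative. The statement you want is exactly the content of the BGL theorem (a copy of $\uH$ cannot sit inside a proper closed subgroup of $\uH$, so the generic point of $\mathcal C_{\uH}$ must have dense image); without it you would have to rerun your whole finite/parabolic/reductive analysis one level down inside $\uH$, which you do not do. The same issue infects your last containment: you need $\mathcal C\subseteq\overline{\uG\cdot\uY}$ with $\uY$ a component of $\Hom(\Gamma,\uH)$ of dimension at most $\dim\Epi(\Gamma,\uH)$, and your parenthetical argument that the component must be $\mathcal C_{\uH}$ (rather than some possibly huge component lying outside $\Epi(\Gamma,\uH)$) is not a proof; the paper dismisses such components $\uY_i$ again via BGL.

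The second soft spot is the reductive case of your main step, which you yourself flag. Passing from $\rho_0\in\overline{\uG\cdot\Hom(\Gamma,\uM)}$ to ``$\rho_0$ lies in the closure of a single orbit $\uG\cdot\sigma$ with $\sigma\in\Hom(\Gamma,\uM)$'' is not automatic (the closure of a union of orbits is not the union of orbit closures), and upgrading it to ``$\uH$ is conjugate into $\uM$'' needs real GIT input: the closed-orbit criterion (closed $\uG$-orbit iff reductive Zariski closure of the image), separation of closed orbits in the affine quotient, and some statement such as closedness/finiteness of the image of $\Hom(\Gamma,\uM)/\!/\uM\to\Hom(\Gamma,\uG)/\!/\uG$ to produce the $\sigma$ and a one-parameter subgroup compatible with $\uM$. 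This can very likely be carried out, and together with the parabolic and finite cases it would give a BGL-free proof, but as it stands these two steps are assertions rather than arguments, whereas the paper's proof closes them off with the cited theorems.
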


\begin{proof}
As $\rho_0$ is a nonsingular point of $\Hom(\Gamma,\uG)$, it belongs to a unique component $\uW$ of the homomorphism variety, and
$$\dim \uW = \dim Z^1(\Gamma,\frakg) = \dim H^1(\Gamma,\frakg) + \dim \uG - \dim Z_{\uG}(\rho_0(\Gamma)) = \dim H^1(\Gamma,\frakg) + \dim \uG.$$
By a result of Breuillard, Guralnick and Larsen \cite{BGL}, the Zariski closure of the image of the representation of $\Gamma$ associated to the generic point of $\uW$
must contain a subgroup isomorphic to $\uH$.  As $\uH$ is a maximal subgroup of $\uG$, this subgroup is
isomorphic either to $\uH$ or to $\uG$.

By Richardson's rigidity theorem \cite{Richardson}, up to conjugation, there are finitely many injective homomorphisms $\uH\to \uG$.
Let $\iota_1,\ldots,\iota_k\colon \uH\to \uG$ be injective homomorphisms representing these classes.
Let $\uY = \uY_1$ denote the unique irreducible component of $\Hom(\Gamma,\uH)$ which contains $\rho_0$, and let $\uY_2,\ldots,\uY_m$ be the other irreducible components.
For each component $\uY_i$ and each injection $\iota_j$, define the conjugation map $\chi_{i,j}\colon \uG\times \uY_i\to \Hom(\Gamma,\uG)$ by
$$\chi_{i,j}(g,\rho) = g(\iota_j\circ \rho)g^{-1}.$$
The fibers of this morphism have dimension at least $\dim \uH$.  Indeed, the action of $\uH$
on $\uG\times \uY_i$ given by
$$h.(g,\rho) = (g\iota_j(h)^{-1},h\rho h^{-1})$$
is free, and $\chi_{i,j}$ is constant on the orbits of the action.  Thus, the closure of the image of $\chi_{i,j}$ has dimension at most
$\dim \uY_i+\dim \uG - \dim \uH$.  If $\uY_i$ is contained in $\Epi(\Gamma,\uH)$, then by hypothesis, this dimension is less than
$\dim Z^1(\Gamma,\frakg)$, which, in turn, is $\le \dim \Hom(\Gamma,\uG)$, since $\rho_0$ is a nonsingular point of $\Hom(\Gamma,\uG)$.
It follows that the image of $\chi_{i,j}$ is not dense in $\uW$.  As the closure of the
representation of $\Gamma$ associated to the generic point has image isomorphic to $\uH$ or $\uG$, the image of $\chi_{i,j}$ cannot be dense in
$\uW$ if $\uY_i$ is not contained in $\Epi(\Gamma,\uH)$.

Thus, the generic point of $\uW$ gives a Zariski dense homomorphism $\rho\colon \Gamma\to \uG(K)$ where $K$ is some finitely generated extension of $\C$.
This is a nonsingular point of the component $\uW$ since $\uW$ has a nonsingular point $\rho_0$.
Replacing $K$ by an algebraic closure, we may assume that it is an algebraically closed field of characteristic zero whose transcendence degree over $\Q$
is the cardinality of the continuum.  Thus, $K\cong \C$.  Fixing an isomorphism, we may take $K=\C$.  Thus, we have a nonsingular element (which we still denote $\rho$) of $\uW(\C)$ which is
a nonsingular point of this variety.   We conclude that $\dim \uW = \dim Z^1(\Gamma,\frakg)$, where $\Gamma$ acts on $\frakg$ through $\Ad\circ\rho$.  It follows that
$$\dim H^1(\Gamma,\Ad\circ\rho) = \dim \uW - \dim \uG = \dim H^1(\Gamma,\Ad\circ\rho_0).$$
\end{proof}

\begin{cor}\label{c:lltriangle}
Let $T=T_{a,b,c}$ be a hyperbolic triangle group, $\uG$ be a simple algebraic group over $\mathbb{C}$, $\rho_0: T \rightarrow \uG(\mathbb{C})$ and $\uH$ be the Zariski closure of $\rho_0(T)$. Assume
\begin{enumerate}[(a)]
\item $\uH$ is semisimple and connected.
\item $\uH$ is a maximal subgroup of $\uG$.
\item If $\frakg$ is the Lie algebra of $\uG$ (where the action is via ${\rm Ad}\circ \rho_0$), then
$$\dim \Epi(T,\uH) - \dim \uH < \dim Z^1(T,\frak{g}) - \dim \uG.$$
\end{enumerate}
Then the following assertions hold:
\begin{enumerate}[(i)]
\item $\rho_0$ is a nonsingular point of $\Hom(T,\uH)$ and $\Hom(T,\uG)$.
\item $\Hom(T,\uG)$ has an irreducible component containing $\rho_0$ of dimension  $\dim H^1(T, \mathfrak{g})+\dim \uG$ with a nonsingular point $\rho$ on it which has a dense image.
\item  $\dim H^1(T, {\rm Ad}\circ \rho)=\dim H^1(T, {\rm Ad}\circ \rho_0)$.
\end{enumerate}
\end{cor}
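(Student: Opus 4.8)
The plan is to obtain Corollary~\ref{c:lltriangle} as the special case $\Gamma=T$ of Theorem~\ref{t:ll}, the only work being to supply hypothesis (d) of that theorem for free. Hypotheses (a), (b), (c) of the corollary are verbatim hypotheses (a), (b), (c) of Theorem~\ref{t:ll} (with $\Gamma=T$), and a simple algebraic group is in particular quasisimple. So once we know that $\rho_0$ is a nonsingular point of both $\Hom(T,\uH)$ and $\Hom(T,\uG)$ --- which is precisely assertion (i) --- Theorem~\ref{t:ll} applies and its conclusion is exactly assertions (ii) and (iii). Hence the whole task reduces to proving (i).

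For (i) I would invoke Weil's nonsingularity criterion, Theorem~\ref{t:weil}(iv), once with ambient group $\uG$ and once with ambient group $\uH$. In each case one must check that the relevant adjoint module has no nonzero invariants under $\rho_0(T)$ and that it is self-dual, so that the dual version of the hypothesis holds automatically. The invariants are easy to identify: since $\uH$ is the Zariski closure of $\rho_0(T)$, the condition $\Ad(\gamma)v=v$ for all $\gamma\in\rho_0(T)$ is equivalent to $\Ad(h)v=v$ for all $h\in\uH$; hence the $\rho_0(T)$-invariants in $\frakg$ equal $\frakg^{\uH}=\Lie Z_{\uG}(\uH)$, and the $\rho_0(T)$-invariants in $\mathfrak{h}=\Lie\uH$ equal $\mathfrak{h}^{\uH}=\Lie Z(\uH)$. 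Because $\uH$ is semisimple its centre is finite, so $\mathfrak{h}^{\uH}=0$; and $\mathfrak{h}$, being the direct sum of the adjoint modules of the simple factors of $\uH$, is self-dual. This gives nonsingularity of $\rho_0$ in $\Hom(T,\uH)$. For $\uG$ the module $\frakg$ is self-dual because $\uG$ is simple over $\C$, so it remains only to see that $Z_{\uG}(\uH)$ is finite: $Z_{\uG}(\uH)\uH$ is a closed subgroup containing the maximal (proper) subgroup $\uH$, hence equals $\uH$ or $\uG$, and the latter forces $\uH$ to be normal in $\uG$, contradicting the simplicity of $\uG$ (note $\uH\neq 1$, since a positive-dimensional simple group has nontrivial proper subgroups). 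So $Z_{\uG}(\uH)=Z(\uH)$ is finite, $\frakg^{\uH}=0$, and Theorem~\ref{t:weil}(iv) gives nonsingularity of $\rho_0$ in $\Hom(T,\uG)$ as well.

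There is no serious obstacle here; the statement is essentially a repackaging of Theorem~\ref{t:ll} in which condition (d) is made automatic. The two places to be slightly careful are, first, that Theorem~\ref{t:weil}(iv) --- stated above for a simple ambient group --- is really a general fact of Weil deformation theory valid for the semisimple group $\uH$ too (or, to avoid relying on that, one re-derives the nonsingularity of $\rho_0$ in $\Hom(T,\uH)$ directly from the vanishing of $\mathfrak{h}^{\uH}$ and the self-duality of $\mathfrak{h}$, exactly as above); and second, the finiteness of $Z_{\uG}(\uH)$, which genuinely uses both the maximality of $\uH$ and the simplicity of $\uG$. With (i) established, assertions (ii) and (iii) are immediate from the conclusion of Theorem~\ref{t:ll}.
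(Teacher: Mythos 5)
Your proposal is correct and follows essentially the same route as the paper: the paper also deduces (i) from Weil's criterion (Theorem~\ref{t:weil}(iv)) via the finiteness of $Z_{\uG}(\uH)$ and self-duality of the adjoint modules (this is exactly the content of Corollary~\ref{c:goingmaxtofull}(i), which you re-derive inline), and then obtains (ii) and (iii) by applying Theorem~\ref{t:ll}. The only cosmetic difference is that the paper cites Corollary~\ref{c:goingmaxtofull} instead of repeating the argument, and remarks that (iii) could alternatively be read off from Corollary~\ref{c:goingmaxtofull}(ii).
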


\begin{proof}
The first part follows from Corollary \ref{c:goingmaxtofull}(i) and Theorem \ref{t:ll} yields the second and third parts. Alternatively one could use Corollary \ref{c:goingmaxtofull}(ii) to derive the final part.
\end{proof}

 It is interesting to compare Corollary \ref{c:lltriangle}  with \cite[Theorem 5.1]{LLM1}.  There as $\uG$ was a real compact form and $\uH$ a closed subgroup, Corollary \ref{c:lltriangle} had a stronger form where we only  had  to  consider $ Z^1(T,{\rm Ad}\circ \rho_0\mid_{\frak{h}})$ and its dimension,  while here we need to work with ${\rm Epi}(T,\uH)$ which \emph{a priori} can be of higher dimension. In what follows we will show that in our special circumstances, by taking $\rho_0$ to be the representation induced from the principal homomorphism, 
$\dim {\rm Epi}(T,\uH)$
is not really larger.


\begin{prop}\label{p:epipri}
Let $T=T_{a,b,c}$ be a hyperbolic triangle group and $\uG$ be a simple adjoint algebraic group  over $\mathbb{C}$.
Let $\rho_0^{\uG}: T\rightarrow {\rm PGL}_2 \rightarrow \uG$ be the representation induced from the principal homomorphism ${\rm PGL}_2\rightarrow \uG$ and consider the action ${\rm Ad}\circ \rho_0^{\uG}$  on the Lie algebra $\mathfrak{g}$ of $\uG$ .
Then
$$ \dim {\rm Epi}(T,\uG) \leq \dim Z^1(T,{\rm Ad}\circ\rho_0^{\uG}).$$ Equivalently,
$$ \dim {\rm Epi}(T,\uG)-\dim \uG \leq \dim H^1(T,{\rm Ad}\circ\rho_0^{\uG}).$$
\end{prop}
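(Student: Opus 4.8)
The plan is to bound $\dim \mathrm{Epi}(T,\uG)$ by exhibiting, for every irreducible component $\uW$ of $\mathrm{Epi}(T,\uG)$, a representation $\rho$ in $\uW$ whose Zariski tangent space $Z^1(T,{\rm Ad}\circ\rho)$ has dimension no larger than $\dim Z^1(T,{\rm Ad}\circ\rho_0^{\uG})$. Since $\dim \uW \leq \dim Z^1(T,{\rm Ad}\circ\rho)$ always (the Zariski tangent space at any point has dimension at least that of the component through it), this suffices. By Weil's formula (Theorem~\ref{t:weil}(i)) and the fact that for $\rho$ with Zariski dense image $\uG$ is simple so that ${\rm Ad}\circ\rho$ and its dual have no invariants, we have $\dim Z^1(T,{\rm Ad}\circ\rho) = 2\dim\frakg - (\dim\frakg^x + \dim\frakg^y + \dim\frakg^z)$ for such $\rho$, and this equality extends by semicontinuity/specialization to the whole component $\uW$. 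So the task reduces to a purely local (at $x,y,z$ separately) comparison: I must show that for $t\in\{x,y,z\}$ of order $n$ in $T$, if $g = \rho(t)$ is an element of order dividing $n$ in $\uG$, then $\dim\frakg^g \geq \dim\frakg^{\rho_0^{\uG}(t)}$, i.e. the principal homomorphism sends each torsion generator to an element whose centralizer in $\frakg$ is of \emph{minimal} dimension among elements of that order.

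The key step is therefore the following claim about the principal homomorphism: for any $n\geq 2$ and any $g\in\uG$ with $g^n = 1$, one has $\dim \frakg^g \geq \dim\frakg^{\iota(w)}$ where $\iota\colon {\rm PGL}_2\to\uG$ is principal and $w\in{\rm PGL}_2$ has order $n$ (the image of $\mathrm{diag}(\zeta,\zeta^{-1})$ for a primitive $2n$-th root of unity $\zeta$, say). Equivalently, by Lemma~\ref{l:prinval}(ii), I must show $\dim\frakg^g \geq \sum_{j=1}^r \bigl(1 + 2\lfloor e_j/n\rfloor\bigr)$, where the $e_j$ are the exponents. The natural route is to decompose $\frakg$ as a module under the principal ${\rm SL}_2$: it is $\bigoplus_{j=1}^r V_{2e_j}$ where $V_{2e_j}$ is the irreducible of highest weight $2e_j$, and the element $\iota(w)$ acts on $V_{2e_j}$ with fixed space of dimension exactly $1 + 2\lfloor e_j/n\rfloor$. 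For a general $g$ of order dividing $n$, I would use the bound $\dim\frakg^g = \tfrac{1}{n}\sum_{\zeta^n=1}\mathrm{tr}(\zeta\text{-part})$... more cleanly: $\dim\frakg^g \geq \mathrm{rank}(\uG) = r$ always (a torsion element lies in a maximal torus, possibly after passing to the adjoint group, so its centralizer contains a maximal torus), but this alone is too weak; I need the refined count, which should follow from an eigenvalue-multiplicity argument — $g$ has finite order $n$, so $\frakg$ decomposes into $n$ eigenspaces for $g$, and the fixed space is the $\zeta = 1$ eigenspace; one shows this is at least as large as the corresponding count for the "most spread out" element, which is exactly the principal torsion element because the principal grading is the one making the eigenvalue distribution as uniform as possible (equivalently, the $\mathbb{Z}/n$-grading of $\frakg$ coming from the principal ${\rm SL}_2$ minimizes the dimension of the degree-$0$ piece over all group homomorphisms $\mathbb{Z}/n\to\uG$, which is a known extremal property of the principal element / related to the Kac coordinates of torsion elements).

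The main obstacle I anticipate is precisely this extremal claim: that among all elements of $\uG$ of order dividing $n$, the principal one $\iota(w)$ has the \emph{smallest} fixed space on $\frakg$. One clean way to handle it: realize any order-$n$ element $g$ inside a maximal torus $\uT$ (of $\uG$, adjoint), so $g = \exp(2\pi i \lambda/n)$ for $\lambda$ in the coweight lattice; then $\dim\frakg^g = r + \#\{\alpha\in\Phi : \langle\alpha,\lambda\rangle \equiv 0 \pmod n\}$, and one must minimize $\#\{\alpha : \langle\alpha,\lambda\rangle\equiv 0\}$ over $\lambda$. The principal choice corresponds to $\lambda = \rho^\vee$ (the half-sum of positive coroots, up to scaling), for which $\langle\alpha,\rho^\vee\rangle = \mathrm{ht}(\alpha)$ ranges over $\{\pm 1,\dots,\pm h^*\}$ with the classical multiplicities, giving the $\lfloor e_j/n\rfloor$ count via the strange formula / the partition of roots by height. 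Showing no other $\lambda$ does better is the heart of the matter; I would argue it by comparing with the result already available — indeed this is the kind of estimate that, for the compact-form analogue, was handled in \cite{LLM1}, and the proof here should either invoke that computation directly or reduce to it by passing between the compact real form and the complex group, since $\dim\frakg^g$ and the fixed-space dimensions of torsion elements are insensitive to the choice of form. If a self-contained argument is wanted, the fallback is a direct case-by-case verification using the explicit exponents listed in Remark~\ref{r:exp} together with the classification of torsion conjugacy classes (Kac coordinates), which is finite bookkeeping per type.
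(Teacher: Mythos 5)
Your overall strategy is the same as the paper's: bound $\dim \mathrm{Epi}(T,\uG)$ by the dimension of the tangent space $Z^1(T,\mathrm{Ad}\circ\rho)$ at a Zariski dense point $\rho$, use simplicity of $\uG$ to kill invariants so that Weil's formula applies, and thereby reduce everything to the pointwise claim that for each generator $t\in\{x,y,z\}$ of order $n$ the principal torsion element minimizes $\dim\frakg^{g}$ over all $g\in\uG$ with $g^{n}=1$. In the paper this last claim is exactly inequality (\ref{e:lietogroupfull}) combined with Lemma \ref{l:fixprifixgen}, which asserts $\dim\frakg^{x_i}=\codi \uG_{[n_i]}$, where $\codi\uG_{[n_i]}$ is the minimal centralizer dimension of an element of order dividing $n_i$ (Remark \ref{r:fixprifixgen}). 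The genuine gap in your proposal is that this extremal claim is never proved, and none of the routes you offer for it holds up as stated. It is not an off-the-shelf ``known extremal property'' of the principal element that can simply be invoked: the paper has to establish it, and does so by computing $r+2\sum_j\lfloor e_j/n\rfloor$ from Lemma \ref{l:prinval}(ii) and matching it against Lawther's explicit values of $\codi\uG_{[n]}$ \cite{Lawther}, with a direct check for exceptional types and an induction on the rank for the classical families. Your appeal to \cite{LLM1} also misfires: the compact-form argument there never required $\dim\mathrm{Epi}$ or this minimality statement at all (which is precisely why the present paper must prove Proposition \ref{p:epipri} anew, as remarked after Corollary \ref{c:lltriangle}), so there is no earlier computation to quote.

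Your fallback of ``finite bookkeeping per type'' via Kac coordinates is also not adequate as stated: it is genuinely finite only for the exceptional types. For $A_r$, $B_r$, $C_r$, $D_r$ the torsion classes (equivalently the Kac coordinates, or Lawther's data) form families whose size grows with $r$, so one needs either a closed formula for $\min_{g^{n}=1}\dim\frakg^{g}$ as a function of $r$ and $n$, or a rank induction comparing it with $r+2\sum_j\lfloor e_j/n\rfloor$ --- and that comparison is exactly the content, and the bulk, of the paper's proof of Lemma \ref{l:fixprifixgen}. So the reduction you set up is correct and matches the paper, but the heart of the argument, which you yourself flag as the main obstacle, is left unestablished.
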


The main ingredient in the proof of Proposition \ref{p:epipri} is the following lemma together with Theorem \ref{t:weil}(ii).

\begin{lem}\label{l:fixprifixgen}
Let $T=T_{n_1,n_2,n_3}=\langle x_1, x_2, x_3: {x_1}^{n_1}={x_2}^{n_2}={x_3}^{n_3}=x_1x_2x_3=1 \rangle$ be a hyperbolic triangle group and $\uG$ be an adjoint simple algebraic group  over $\C$ of rank $r$.
Let $\rho_0^{\uG}: T \rightarrow {\rm PGL}_2\rightarrow \uG$ be the representation induced from the principal homomorphism ${\rm PGL}_2 \rightarrow \uG$ and consider the action  ${\rm Ad} \circ \rho_0^{\uG}$ on the Lie algebra $\mathfrak{g}$ of $\uG$. Then, for $1\leq i \leq 3$,
$$\dim \frak{g}^{x_i}= \codi \uG_{[n_i]},$$
where $G_{[n_i]}$ is the subvariety of $\uG$ consisting of elements of order dividing $n_i$.
\end{lem}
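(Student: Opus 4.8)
Write $g_i=\rho_0^{\uG}(x_i)$. Since $x_i^{n_i}=1$ we have $g_i^{n_i}=1$, and as $\uG$ is defined over $\C$ the element $g_i$ is semisimple; hence $\mathfrak g^{x_i}=\mathfrak g^{g_i}=\Lie Z_{\uG}(g_i)$, so that $\codi \mathfrak g^{x_i}=\dim(g_i^{\uG})$ is the dimension of the conjugacy class of $g_i$. Since $g_i^{\uG}\subseteq\uG_{[n_i]}$ we get $\codi \mathfrak g^{x_i}\le\delta_{n_i}^{\uG}$ for free --- this is precisely \eqref{e:lietogroupfull}. So the entire content of the lemma is the reverse inequality $\delta_{n_i}^{\uG}\le\codi \mathfrak g^{x_i}$: the principal element $g_i$ must realise a conjugacy class of \emph{maximal} dimension among the elements of $\uG$ of order dividing $n_i$.

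\textbf{Reduction to a root system inequality.} A maximal torus $\mathbf T$ of $\uG$ meets $\uG_{[n_i]}$ in its (finite) $n_i$-torsion subgroup, and every element of $\uG_{[n_i]}$ is semisimple, hence conjugate into $\mathbf T$; so $\uG_{[n_i]}$ is a finite union of conjugacy classes and $\delta_{n_i}^{\uG}=\dim\uG-\min\{\dim Z_{\uG}(s):s\in\uG,\ s^{n_i}=1\}$. Thus it suffices to show $\dim Z_{\uG}(s)\ge\dim Z_{\uG}(g_i)$ for every such $s$. Conjugating $s$ into $\mathbf T$ and using that $\uG$ is adjoint, write $s=\exp(2\pi\sqrt{-1}\,\mu/n_i)$ with $\mu$ in the cocharacter lattice $X_*(\mathbf T)$ (the coweight lattice); then $Z_{\uG}(s)^{\circ}$ is reductive with maximal torus $\mathbf T$ and root system $\{\alpha\in\Phi:\langle\alpha,\mu\rangle\equiv 0\ (\mathrm{mod}\ n_i)\}$ (here $\Phi$ is the root system of $\uG$), so $\dim Z_{\uG}(s)=r+\#\{\alpha\in\Phi:n_i\mid\langle\alpha,\mu\rangle\}$. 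On the other hand $g_i=\exp(2\pi\sqrt{-1}\,\rho^{\vee}/n_i)$ where $\rho^{\vee}$ is the half-sum of the positive coroots, and since $\langle\alpha,\rho^{\vee}\rangle=\mathrm{ht}(\alpha)$ this gives $\dim Z_{\uG}(g_i)=r+\#\{\alpha\in\Phi:n_i\mid\mathrm{ht}(\alpha)\}$, which agrees with Lemma~\ref{l:prinval}(ii) via the classical identity $\sum_{\alpha>0}q^{\mathrm{ht}(\alpha)}=\sum_{j=1}^{r}(q+q^{2}+\cdots+q^{e_j})$. Hence the lemma reduces to the purely root-theoretic inequality
$$\#\{\alpha\in\Phi:n\mid\langle\alpha,\mu\rangle\}\ \ge\ \#\{\alpha\in\Phi:n\mid\mathrm{ht}(\alpha)\}\qquad\text{for every }\mu\in X_*(\mathbf T)\text{ and }n=n_i,$$
i.e.\ $\rho^{\vee}/n$ lies on at least as many of the hyperplanes $\{\langle\alpha,\cdot\rangle\in\Z\}$ as any point of $\tfrac1n X_*(\mathbf T)$.

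\textbf{The inequality: the main obstacle.} When $n\ge h$ (the Coxeter number) the right-hand side is $0$ and there is nothing to prove, so only the finitely many $n<h$ are at issue. I would attack this through the Borel--de Siebenthal / Kac-coordinate picture: the subsystem $\{\alpha:n\mid\langle\alpha,\mu\rangle\}$ is a pseudo-Levi subsystem whose affine Dynkin diagram is the full subdiagram of the extended diagram of $\uG$ on the nodes $\{i:m_i=0\}$, where $(m_0,\dots,m_r)$ are non-negative integers with $\sum_{i=0}^{r}a_i m_i=n$ (the $a_i$ being the marks), and likewise for the alcove representative of $\rho^{\vee}/n$; one then compares numbers of roots of subdiagrams of a fixed affine diagram. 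For the classical types this is transparent from the natural module --- e.g.\ for $X=A_r$, $\uG=\PGL_{r+1}$, the principal element acts on $\C^{r+1}$ with eigenvalue multiplicities as nearly equal as possible, which is exactly the configuration minimising $\sum_k d_k^2-1=\dim Z_{\uG}(s)$ --- and for the exceptional types it is a bounded check over $n<h$.

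\textbf{Alternative and summary.} Alternatively one may simply quote the known evaluation of $\delta_m^{\uG}=\dim\uG_{[m]}$ from the literature on representation varieties of Fuchsian groups (see \cite{LLM1} and the references given there), which equals $\dim\uG-\sum_{j=1}^{r}\bigl(1+2\lfloor e_j/m\rfloor\bigr)$; combined with Lemma~\ref{l:prinval}(ii) this yields the lemma at once. In either route every step except the root system inequality of the preceding paragraph is formal; that inequality (equivalently, the explicit value of $\delta_m^{\uG}$) is where the real work lies.
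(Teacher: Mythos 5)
Your setup and reduction are correct: the containment $g_i^{\uG}\subseteq\uG_{[n_i]}$ gives $\codi\mathfrak g^{x_i}\le\dim\uG_{[n_i]}$'s complement for free, and the lemma is exactly the statement that the principal element of order $n_i$ has \emph{minimal} centralizer dimension among all elements of order dividing $n_i$, which you correctly translate (via Kac coordinates/Borel--de Siebenthal and the height--exponent identity) into the inequality $\#\{\alpha\in\Phi: n\mid\langle\alpha,\mu\rangle\}\ \ge\ \#\{\alpha\in\Phi: n\mid\mathrm{ht}(\alpha)\}$ for all $\mu\in X_*(\mathbf T)$. (Your verbal gloss of this display is backwards -- $\rho^\vee/n$ should lie on the \emph{fewest} such hyperplanes -- but the displayed inequality is the right one.) The genuine gap is that this inequality is never proved. ``One then compares numbers of roots of subdiagrams of a fixed affine diagram'' is the entire problem, not a proof: you must show that for every choice of Kac coordinates $s_0,\dots,s_r\ge 0$ with $\sum a_is_i=n$, the pseudo-Levi subsystem on $\{i:s_i=0\}$ has at least $2\sum_j\lfloor e_j/n\rfloor$ roots, and neither the classical nor the exceptional case of this is carried out. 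The claim that classical types are ``transparent from the natural module'' also glosses over the adjoint-group subtlety: for $\PGL_{r+1}$, $\mathrm{PSO}_{2r}$, etc., one must also minimize over classes whose lifts satisfy $g^n=$ a nontrivial scalar, and these ``isolated/projective'' classes are precisely why the true minimal centralizer dimensions (Lawther's formulas, with their extra terms such as $\epsilon_a\lceil\alpha_r/2\rceil$) require care; only the $A_r$ case is even sketched.

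Your proposed shortcut -- ``simply quote the known evaluation $\delta_m^{\uG}=\dim\uG-\sum_j(1+2\lfloor e_j/m\rfloor)$ from the literature'' -- is circular: combined with Lemma \ref{l:prinval}(ii), that identity \emph{is} the lemma. What the literature (Lawther \cite{Lawther}) actually provides is $\codi\uG_{[m]}$ in a different form, in terms of the quotient and remainder of the Coxeter number $h$ upon division by $m$, with case distinctions by type and parity; the paper's proof consists exactly of reconciling Lawther's expressions with the exponent formula, by a direct check for the exceptional types (and the trivial case $m\ge h$) and by an induction on the rank for $A_r$, $B_r/C_r$ and $D_r$, where the increments $R_{r+1,a}-R_{r,a}$ and $L_{r+1,a}-L_{r,a}$ are matched case by case. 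So your proposal identifies the right statement to prove and a plausible route (a direct root-theoretic minimality argument for $\rho^\vee/n$, genuinely different from the paper's comparison with Lawther), but as written it asserts rather than establishes the one step where all the work lies.
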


\begin{rmk}\label{r:fixprifixgen}
Note that $\codi \uG_{[n_i]}$ is the minimal dimension of a centralizer of an element of $\uG$ of order dividing $n_i$ and its value is given in \cite{Lawther}.
\end{rmk}

\begin{proof}
Write $a=n_i$ and $x=x_i$. By Lemma \ref{l:prinval}(ii)
$$\dim \frak{g}^x=r+2\sum_{j=1}^r \left \lfloor \frac{e_j}{a} \right \rfloor$$ where $e_1,\dots,e_r$ are the exponents of $\uG$ which are given in Remark \ref{r:exp}. Hence the result will follow once we show that
\begin{equation}\label{e:codipri}
r+2\sum_{j=1}^r \left \lfloor \frac{e_j}{a} \right \rfloor = \codi \uG_{[a]}.
\end{equation}
We let $h=|\Phi|/r$ be the Coxeter number of $\uG$ where $\Phi$ denotes the root system of $\uG$. Suppose first that $\uG$ is of exceptional type.
If $a \geq h$, it follows immediately from Remark \ref{r:exp} that $\dim \frak{g}^x=r$ and  by Lawther \cite{Lawther} we have $\codi \uG_{[a]}=r$ and so (\ref{e:codipri}) holds. Finally if $a <h$, then \cite{Lawther} gives the value for $\codi \uG_{[a]}$ which is easily checked to be equal to $\dim \frak{g}^x$, again using Lemma \ref{l:prinval}(ii).   
 

Suppose now that $\uG$ is of classical type. We prove that (\ref{e:codipri}) holds by induction on $r$. We let
$\uG_r=\uG$, $\frak{g}_r=\frak{g}$, $h_r=h$, $L_{r,a}=\dim \frak{g}_r^x$ and $R_{r,a}=\codi {\uG_r}_{[a]}$. Letting $r_0=1,2,2$ or $4$ according respectively as $\uG=A_r$, $B_r$, $C_r$ and $D_r$, we note that (\ref{e:codipri}) holds provided that $L_{r_0,a}=R_{r_0,a}$ and $L_{r+1,a}-L_{r,a}=R_{r+1,a}-R_{r,a}$ for all $r \geq r_0$.  
 
 Write $h_r=\alpha_ra+\beta_r$ where $\alpha_r\geq 0$ and $0\leq \beta_r <a$ are integers, and for an integer $\gamma$, let $\epsilon_{\gamma}=1$ if $\gamma$ is odd, otherwise $\epsilon_{\gamma}=0$.  The value of $\codi {\uG_r}_{[a]}$, given in \cite{Lawther}, depends in general on $\alpha_r$, $\beta_r$ and $a$. 
 
Suppose first that $\uG=A_r$ where $r \geq 1$. Then $h_r=r+1$. By Lemma \ref{l:prinval}(ii) and  Remark \ref{r:exp},  $L_{1,a}=1$ (recall $a>1$)
and $$L_{r+1,a}-L_{r,a}=1+2\left \lfloor \frac{r+1}{a} \right\rfloor.$$  By \cite[p. 222]{Lawther}
$$R_{r,a}=\alpha_r^2a+\beta_r(2\alpha_r+1)-1$$ and
$$ R_{r+1,a}-R_{r,a}=(\alpha_{r+1}^2-\alpha_r^2)a+\beta_{r+1}(2\alpha_{r+1}+1)-\beta_r(2\alpha_r+1).$$
Since $h_r=r+1$ and $h_{r+1}=r+2$, we have
$$\alpha_r=\left\lfloor  \frac{r+1}{a}\right\rfloor\quad \textrm{and}\quad (\alpha_{r+1},\beta_{r+1})=\left \{\begin{array}{ll} (\alpha_r,\beta_r+1) & \textrm{if} \ 0 \leq \beta_r< a-1 \\ (\alpha_r+1,0) & \textrm{if} \ \beta_r=a-1. \end{array} \right.$$
It follows that
$$R_{1,a}=1=L_{1,a}$$ and
$$R_{r+1,a}-R_{r,a}=1+2\alpha_r=1+2\left\lfloor  \frac{r+1}{a}\right\rfloor=L_{r+1,a}-L_{r,a}$$
as required. 
 
Suppose now that $\uG=B_r$ or $C_r$ where $r \geq 2$. Then $h_r=2r$. By  Lemma \ref{l:prinval}(ii)  and Remark \ref{r:exp},  $L_{2,a}=4$ if $a \in \{2,3\}$, $L_{2,a}=2$ if $a > 3$,
and $$L_{r+1,a}-L_{r,a}=1+2\left \lfloor \frac{2r+1}{a} \right\rfloor.$$  By \cite[p. 222]{Lawther}
$$R_{r,a}=\frac12(\alpha_r^2a+\beta_r(2\alpha_r+1))+\epsilon_a \left \lceil \frac{\alpha_r}2 \right \rceil$$ and
$$ R_{r+1,a}-R_{r,a}=\frac12((\alpha_{r+1}^2-\alpha_r^2)a+\beta_{r+1}(2\alpha_{r+1}+1)-\beta_r(2\alpha_r+1))+\epsilon_a\left( \left \lceil \frac{\alpha_{r+1}}2 \right \rceil-\left \lceil \frac{\alpha_r}2 \right \rceil\right).$$
Since $h_r=2r$ and $h_{r+1}=2r+2$, we have
$$\alpha_r=\left\lfloor  \frac{2r}{a}\right\rfloor\quad \textrm{and}\quad (\alpha_{r+1},\beta_{r+1})=\left \{\begin{array}{ll} (\alpha_r,\beta_r+2) & \textrm{if} \ 0 \leq \beta_r< a-2 \\ (\alpha_r+1,0) & \textrm{if} \ \beta_r=a-2\\
 (\alpha_r+1,1) & \textrm{if} \ \beta_r=a-1. \end{array} \right.$$
It follows that
$$R_{2,a}=L_{2,a}=\left\{ \begin{array}{ll} 4 & \textrm{if}\ a \in \{2,3\}\\ 2 & \textrm{if} \ a>3. \end{array}\right.$$
Also if $0 \leq \beta_r <a-2$ then $$R_{r+1,a}-R_{r,a}=1+2\alpha_r=1+2\left \lfloor \frac{2r}{a}  \right \rfloor=1+2\left \lfloor \frac{2r+1}{a}  \right \rfloor=L_{r+1,a}-L_{r,a}.$$
If $\beta_r=a-2$ then $\alpha_r$ is odd whenever $a$ is odd, and it follows that  $$R_{r+1,a}-R_{r,a}=1+2\alpha_r=1+2\left \lfloor \frac{2r}{a}  \right \rfloor=1+2\left \lfloor \frac{2r+1}{a}  \right \rfloor=L_{r+1,a}-L_{r,a}.$$
Finally, if $\beta_r=a-1$ then $a$ is odd, $\alpha_r$ is even, and it follows that $$R_{r+1,a}-R_{r,a}=3+2\alpha_r=3+2\left \lfloor \frac{2r}{a}  \right \rfloor=1+2\left \lfloor \frac{2r+1}{a}  \right \rfloor=L_{r+1,a}-L_{r,a}.$$

It remains to consider the case $\uG=D_r$ where $r \geq 4$. Then $h_r=2r-2$. We also write $r= \eta_r a + \theta_r$ where $\eta_r \geq 0$ and $0\leq \theta_r<a$ are integers.  By  Lemma \ref{l:prinval}(ii) and  Remark \ref{r:exp},  $L_{4,2}=12$, $L_{4,3}=10$, $L_{4,{a}}=6$  if $a \in \{4,5\}$,  $L_{4,a}=4$ if  $a >5$, and   $$L_{r+1,a}-L_{r,a}=1+2\left ( \left \lfloor \frac{2r-1}{a} \right\rfloor+\left \lfloor \frac{r}a \right  \rfloor- \left \lfloor  \frac{r-1}{a}  \right \rfloor \right).$$ By \cite[p. 222]{Lawther}
$$R_{r,a}=\frac12(\alpha_r^2a+\beta_r(2\alpha_r+1))+\epsilon_a \left \lceil \frac{\alpha_r}2 \right \rceil +\alpha_r+1-\epsilon_{\alpha_r}$$ and
\begin{eqnarray*}
 R_{r+1,a}-R_{r,a}& = &\frac12((\alpha_{r+1}^2-\alpha_r^2)a+\beta_{r+1}(2\alpha_{r+1}+1)-\beta_r(2\alpha_r+1))+\epsilon_a\left( \left \lceil \frac{\alpha_{r+1}}2 \right \rceil-\left \lceil \frac{\alpha_r}2 \right \rceil\right)\\
& &+\alpha_{r+1}-\alpha_r-\epsilon_{\alpha_{r+1}}+\epsilon_{\alpha_r}.
\end{eqnarray*}
Since $h_r=2r-2$ and $h_{r+1}=2r$, we have
$$\alpha_r=\left\lfloor  \frac{2r-2}{a}\right\rfloor\quad \textrm{and}\quad (\alpha_{r+1},\beta_{r+1})=\left \{\begin{array}{ll} (\alpha_r,\beta_r+2) & \textrm{if} \ 0 \leq \beta_r< a-2 \\ (\alpha_r+1,0) & \textrm{if} \ \beta_r=a-2\\
 (\alpha_r+1,1) & \textrm{if} \ \beta_r=a-1. \end{array} \right.$$
It follows that
$$R_{4,a}=L_{4,a}=\left\{ \begin{array}{ll}
12 & \textrm{if} \ a=2\\
10& \textrm{if} \ a=3\\
6 & \textrm{if} \ a \in \{4,5\}\\
4& \textrm{if} \ a > 5.
\end{array}\right.$$

Suppose $0 \leq \beta_r <a-2$. Then  $$R_{r+1,a}-R_{r,a}=1+2\alpha_r=1+2\left \lfloor \frac{2r-2}{a}  \right \rfloor=1+2\left \lfloor \frac{2r-1}{a}  \right \rfloor.$$ Hence to show that  $R_{r+1,a}-R_{r,a}=L_{r+1,a}-L_{r,a}$ we need to check that
$$  \left \lfloor \frac{r}a \right  \rfloor- \left \lfloor  \frac{r-1}{a}  \right \rfloor=0.$$ Assume otherwise.  Writing $r-1=\eta_{r-1}a+\theta_{r-1}$ and $r=\eta_r a+\theta_r$ as above, we get
$\theta_{r-1}=a-1$, $\theta_r=0$ and $\eta_{r}=\eta_{r-1}+1$. Since $h_r=2(r-1)$, it follows that $\alpha_r=2\eta_{r-1}+1$ and  $\beta_r=a-2$. This yields $\alpha_{r+1}=2\eta_{r-1}+2=\alpha_r+1$, contradicting $\alpha_{r+1}=\alpha_{r}$. 
 
Suppose $\beta_r=a-2$. Note that $a$ is even if $\alpha_r$ is even. Now
$$R_{r+1,a}-R_{r,a}=\left \{ \begin{array}{ll} 3+2\alpha_r& \textrm{if} \ \alpha_r \ \textrm{is odd}\\
1+ 2\alpha_r & \textrm{if} \ \alpha_r \ \textrm{is even} \end{array} \right.  =1+2\epsilon_{\alpha_r}+2 \left\lfloor \frac{2r-1}{a} \right\rfloor.$$

 Hence to show that  $R_{r+1,a}-R_{r,a}=L_{r+1,a}-L_{r,a}$ we need to check that
\begin{equation}\label{e:dcaselawther}  \left \lfloor \frac{r}a \right  \rfloor- \left \lfloor  \frac{r-1}{a}  \right \rfloor=\epsilon_{\alpha_r}.\end{equation} Write $r-1=\eta_{r-1}a+\theta_{r-1}$ and $r=\eta_r a+\theta_r$ as above.  Suppose first that $\alpha_r$ is odd. Then $2\theta_{r-1}\geq a$ and $\alpha_r=2\eta_{r-1}+1$ and $\beta_r=2\theta_{r-1}-a$. Since $\beta_r=a-2$, we get $\theta_{r-1}=a-1$ which yields $\eta_r=\eta_{r-1}+1$ and so (\ref{e:dcaselawther}) holds. Suppose now that $\alpha_r$ is even so that $a$ is also even. Assume (\ref{e:dcaselawther}) does not hold. Then  $\theta_{r-1}=a-1$, $\theta_r=0$ and $\eta_{r}=\eta_{r-1}+1$. Since $h_r=2(r-1)$, it follows that $\alpha_r=2\eta_{r-1}+1$, contradicting $\alpha_{r}$ is even. 
 

Suppose $\beta_r=a-1$. Note that $\alpha_r$ is even and $a$ is odd.  Also
$$R_{r+1,a}-R_{r,a}=3+2\alpha_r= 3+2\left \lfloor \frac{2r-2}{a} \right \rfloor=1+2\left \lfloor \frac{2r-1}{a} \right \rfloor.$$

 Hence to show that  $R_{r+1,a}-R_{r,a}=L_{r+1,a}-L_{r,a}$ we need to check that
$$ \left \lfloor \frac{r}a \right  \rfloor- \left \lfloor  \frac{r-1}{a}  \right \rfloor=0.$$  Assume otherwise, and  write $r-1=\eta_{r-1}a+\theta_{r-1}$ and $r=\eta_r a+\theta_r$ as above. Then  $\theta_{r-1}=a-1$, $\theta_r=0$ and $\eta_{r}=\eta_{r-1}+1$. Since $h_r=2(r-1)$, it follows that $\alpha_r=2\eta_{r-1}+1$, contradicting $\alpha_{r}$ is even.\end{proof}

\noindent {\textit{Proof of Proposition \ref{p:epipri}}.}
Note that $$\dim {\rm Epi}(T,\uG) \leq {\rm max} \ \{\dim Z^1(T, {\rm Ad}\circ \rho): \rho \in {\rm Hom}(T,\uG),\  \overline{\rho(T)}=\uG\}.$$
Since $\uG$ is simple and defined over $\C$, a Zariski dense representation in ${\rm Hom}(T,\uG)$ composed with the adjoint representation  has no invariants on $\frak{g}$ and is self-dual. Hence by  Theorem \ref{t:weil}(i) and (\ref{e:lietogroupfull})
$$\dim {\rm Epi}(T,\uG)\leq 2\dim \uG - (\codi \uG_{[a]}+\codi \uG_{[b]}+\codi \uG_{[c]}).$$

On the other hand, since $\rho_0^{\uG}:T\rightarrow \uG$ is the representation induced from the principal homomorphism ${\rm PGL_2} \rightarrow \uG$,  ${\rm Ad}\circ \rho_0^{\uG}$  and $({\rm Ad}\circ \rho_0^{\uG})^*$ have no invariants on $\frak{g}$ and $\frak{g}^*$, respectively. Hence
by Theorem \ref{t:weil}(i)
$$\dim Z^1(T,{\rm Ad}\circ \rho_0^{\uG})=2\dim \uG-(\frak{g}^x+\frak{g}^y+\frak{g}^z).$$ The result now follows immediately from  Lemma \ref{l:fixprifixgen}. \quad $\square$

\begin{cor}\label{c:genprimet}
Let $T=T_{a,b,c}$ be a hyperbolic triangle group, $\uG$ be a simple adjoint algebraic group  over $\mathbb{C}$,  $\sigma_1: T \rightarrow \uG(\mathbb{C})$ be a homomorphism, and $\uH$ be  the Zariski closure of $\sigma_1(T)$.  Assume
\begin{enumerate}[(a)]
\item $\uH$ is semisimple and connected.
\item $\uH$ is a maximal subgroup of $\uG$.
\item The image of $\rho_0: T \rightarrow \uG$, where $\rho_0$ is the representation induced from the principal homomorphism from ${\rm PGL}_2$ into $\uG$, is inside $\uH$ (in this case $\rho_0$ is also the representation induced from the principal homomorphism from ${\rm PGL}_2$ into $\uH$) and $\rho_0$ and $\sigma_1$ belong to a common  irreducible component of $\Hom(T,\uG)$. 
\item $$\dim H^1(T,{\rm Ad}\circ \rho_0 \mid_{\frak{h}})< \dim H^1(T,{\rm Ad}\circ \rho_0\mid_{\frak{g}}).$$
\end{enumerate}
Then there exists a nonsingular representation $\rho_1: T \rightarrow \uG$ in the irreducible component of $\Hom(T,\uG)$ containing $\sigma_1$ such that $\overline{\rho_1(T)}=\uG$ and $$\dim H^1(T,{\rm Ad}\circ \rho_1\mid_{\frak{g}})=\dim H^1(T,{\rm Ad}\circ \sigma_1\mid_{\frak{g}})=\dim H^1(T,{\rm Ad}\circ \rho_0\mid_{\frak{g}}).$$
\end{cor}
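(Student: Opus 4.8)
The plan is to apply Corollary~\ref{c:lltriangle} with $\sigma_1$ playing the role of the base representation and $\uH=\overline{\sigma_1(T)}$, and then to transport the resulting cohomological equalities back to the principal representation $\rho_0$ by means of hypothesis (c). Hypotheses (a) and (b) of Corollary~\ref{c:lltriangle} are nothing but hypotheses (a) and (b) of the present statement, so the only point requiring work is the dimension inequality
$$\dim {\rm Epi}(T,\uH)-\dim\uH < \dim Z^1(T,{\rm Ad}\circ\sigma_1\mid_{\frak{g}})-\dim\uG.$$

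To treat the right-hand side, I would note that, since $\uG$ is simple and $\sigma_1(T)$ is Zariski dense in the maximal (hence finite-center) subgroup $\uH$, Corollary~\ref{c:goingmaxtofull}(i), applied to $\rho_0$ and $\sigma_1$ (recall ${\rm Ad}\circ\rho_0$ has no invariants on $\frak{g}$ by Lemma~\ref{l:prinval}(i)), shows that $\sigma_1$ is a nonsingular point of $\Hom(T,\uG)$ and that ${\rm Ad}\circ\sigma_1$ has no invariants on $\frak{g}$; hence $\dim B^1(T,{\rm Ad}\circ\sigma_1)=\dim\uG$ and so $\dim Z^1(T,{\rm Ad}\circ\sigma_1\mid_{\frak{g}})-\dim\uG=\dim H^1(T,{\rm Ad}\circ\sigma_1\mid_{\frak{g}})$. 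Because $\rho_0$ and $\sigma_1$ lie in a common irreducible component of $\Hom(T,\uG)$, Corollary~\ref{c:goingmaxtofull}(ii) gives moreover $\dim H^1(T,{\rm Ad}\circ\sigma_1\mid_{\frak{g}})=\dim H^1(T,{\rm Ad}\circ\rho_0\mid_{\frak{g}})$, which is already one of the two equalities asserted in the conclusion. For the left-hand side, I would apply Proposition~\ref{p:epipri} to $\uH$ in place of $\uG$: by hypothesis (c) the restriction of $\rho_0$ to $\uH$ is precisely the representation of $T$ induced from the principal homomorphism ${\rm PGL}_2\to\uH$, so the proposition gives $\dim {\rm Epi}(T,\uH)-\dim\uH\leq\dim H^1(T,{\rm Ad}\circ\rho_0\mid_{\frak{h}})$. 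Feeding hypothesis (d) into the chain
$$\dim {\rm Epi}(T,\uH)-\dim\uH \leq \dim H^1(T,{\rm Ad}\circ\rho_0\mid_{\frak{h}}) < \dim H^1(T,{\rm Ad}\circ\rho_0\mid_{\frak{g}}) = \dim Z^1(T,{\rm Ad}\circ\sigma_1\mid_{\frak{g}})-\dim\uG$$
then yields exactly hypothesis (c) of Corollary~\ref{c:lltriangle}.

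Invoking Corollary~\ref{c:lltriangle} now produces an irreducible component of $\Hom(T,\uG)$ containing $\sigma_1$---the unique one, since $\sigma_1$ is nonsingular---carrying a nonsingular point $\rho_1$ with $\overline{\rho_1(T)}=\uG$ and $\dim H^1(T,{\rm Ad}\circ\rho_1\mid_{\frak{g}})=\dim H^1(T,{\rm Ad}\circ\sigma_1\mid_{\frak{g}})$. Chaining this with the equality $\dim H^1(T,{\rm Ad}\circ\sigma_1\mid_{\frak{g}})=\dim H^1(T,{\rm Ad}\circ\rho_0\mid_{\frak{g}})$ obtained above gives the threefold equality in the statement, and $\rho_1$ lies in the component containing $\sigma_1$ by construction; this completes the argument.

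The one genuine obstacle is that Proposition~\ref{p:epipri} is stated for a simple adjoint group, whereas $\uH$ is only semisimple (in the applications, a direct product of simple adjoint groups). I would dispose of this by checking that the proof of Proposition~\ref{p:epipri} carries over verbatim to $\uH$: writing $\uH=\uH_1\times\cdots\times\uH_s$, the subvariety $\uH_{[m]}$ of elements of order dividing $m$ is the product of the $(\uH_i)_{[m]}$, so $\codi\uH_{[m]}=\sum_i\codi(\uH_i)_{[m]}$; the principal homomorphism ${\rm PGL}_2\to\uH$ restricts to the principal homomorphism into each $\uH_i$, so $\dim\frak{h}^{x}=\sum_i\dim\frak{h}_i^{x}$ and Lemma~\ref{l:fixprifixgen} applies factor by factor; and any Zariski dense homomorphism $T\to\uH$ projects to a Zariski dense homomorphism into each $\uH_i$, hence acts on every $\frak{h}_i$, and therefore on $\frak{h}$, with no invariants and self-dually. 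With these observations the bound $\dim {\rm Epi}(T,\uH)\leq\dim Z^1(T,{\rm Ad}\circ\rho_0\mid_{\frak{h}})=\dim H^1(T,{\rm Ad}\circ\rho_0\mid_{\frak{h}})+\dim\uH$ follows exactly as in the proof of Proposition~\ref{p:epipri}; everything else in the argument is bookkeeping with the dimension identities of Theorem~\ref{t:weil} and Corollary~\ref{c:goingmaxtofull}.
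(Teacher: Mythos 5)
Your proposal is correct and follows essentially the same route as the paper: you verify hypothesis (c) of Corollary~\ref{c:lltriangle} by combining Proposition~\ref{p:epipri} applied to $\uH$, Corollary~\ref{c:goingmaxtofull}(i)--(ii) and hypothesis (d), and then invoke Corollary~\ref{c:lltriangle}, exactly as the paper does. Your additional care in checking that Proposition~\ref{p:epipri} extends factor-by-factor to a semisimple $\uH$ addresses a point the paper silently glosses over (it applies the proposition to $\uH$ directly, though in all its applications $\uH$ is simple adjoint), and your verification of that extension is sound.
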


\begin{proof}
The result will follow  from Corollary \ref{c:lltriangle} once we show that
\begin{equation}\label{e:genprimet}
\dim {\rm Epi}(T,\uH)-\dim \uH< \dim Z^1(T,{\rm Ad}\circ \sigma_1 \mid_{\frak{g}})-\dim \uG.
\end{equation}
Now by Proposition \ref{p:epipri} 
$$\dim {\rm Epi}(T,\uH)-\dim \uH \leq \dim H^1(T,{\rm Ad}\circ \rho_0 \mid_{\frak{h}}).$$ 
Since $\overline{\sigma_1(T)}=\uH$ is a maximal subgroup of $\uG$ and $Z(\uH)$ is finite, Corollary \ref{c:goingmaxtofull}(i) shows that   ${\rm Ad}\circ\sigma_1$ and $({\rm Ad}\circ \sigma_1)^*$ have no invariants on $\frak{g}$ and $\frak{g}^*$, respectively.  In particular (see Theorem \ref{t:weil}), we get $$\dim Z^1(T,{\rm Ad}\circ \sigma_1 \mid_{\frak{g}})-\dim \uG=\dim H^1(T,{\rm Ad}\circ \sigma_1 \mid_{\frak{g}}).$$
Now as $\sigma_1$ and $\rho_0$ are in a common irreducible component of $\Hom(T,\uG)$, Corollary \ref{c:goingmaxtofull}(ii) yields
$$\dim H^1(T,{\rm Ad}\circ \sigma_1 \mid_{\frak{g}})=   \dim H^1(T,{\rm Ad}\circ \rho_0 \mid_{\frak{g}})$$ and so
$$\dim Z^1(T,{\rm Ad}\circ \sigma_1 \mid_{\frak{g}})-\dim \uG=\dim H^1(T,{\rm Ad}\circ \rho_0 \mid_{\frak{g}}).$$
Inequality (\ref{e:genprimet}) now follows from  the assumption that
$$ \dim H^1(T,{\rm Ad}\circ \rho_0 \mid_{\frak{h}}) < \dim H^1(T,{\rm Ad}\circ \rho_0 \mid_{\frak{g}}).$$
\end{proof}

\section{Non ${\rm SO}(3)$-dense hyperbolic triangle groups}\label{s:nonso3dense}

By \cite{LL} every hyperbolic triangle group $T$ is ${\rm SO}(3)$-dense, unless $T$ belongs to $$S=\{T_{2,4,6}, T_{2,6,6}, ,T_{2,6,10},T_{3,4,4},T_{3,6,6},T_{4,6,12}\}.$$ 
The arguments in \cite{LLM1} for proving saturation break down completely for $T\in S$.
In this section we deal with these cases, proving that with a few exceptions $(T,X)$ consisting of $T\in S$ and 
$X$ an irreducible Dynkin diagram, $T$ is generally saturated with finite quotients of type $X$.

We let $\uG=X(\mathbb{C})$ be a simple adjoint algebraic group over $\mathbb{C}$ of type $X$ and  $\rho_0: T\rightarrow \uG$ be the representation induced from the principal homomorphism  ${\rm PGL}_2 \rightarrow \uG$.
To avoid confusion we will sometimes write
$\rho_0^{\uG}$ instead of $\rho_0$.
If $\frak{g}$ denotes the Lie algebra of $\uG$, we will for conciseness  write $\mathfrak{g}$ for $({\rm Ad} \circ \rho_0^{\uG}\mid_{\frak{g}})$, i.e. the action of $T$ on $\frak{g}$ via ${\rm Ad}\circ\rho_0^{\uG}$.

\begin{prop}\label{p:nonso3dense}
Let $T=T_{a,b,c}$ be a non ${\rm SO}(3)$-dense hyperbolic triangle group (i.e. $T \in S$) and $X$ be an irreducible Dynkin diagram. Then $T$ is saturated with finite quotients of type $X$ except possibly if
$(T,X)$ is as in Table \ref{tab:nso3densepe} below.
\begin{table}[h]
\caption{{\small{Non ${\rm SO}(3)$-dense  $T_{a,b,c}$ possibly not saturated with finite quotients of type $X$}}}\label{tab:nso3densepe}
\begin{tabular}{|l|l|}
\hline
$X$ & $(a,b,c)$   \\
\hline
$A_r$, $r \leq 9$ & $(2,4,6)$\\
$A_2$, $A_3$ & $(2,4,6)$, $(2,6,6)$, $(2,6,10)$ \\
$A_1$ & $(2,4,6)$, $(2,6,6)$, $(2,6,10)$, $(3,4,4)$, $(3,6,6)$, $(4,6,12)$ \\
\hline
$D_r$, $r\in \{5,7,9,13\}$ & $(2,4,6)$ \\
$D_7$ & $(2,6,6)$  \\
$D_5$ & $(3,4,4)$\\
\hline
$E_6$ & $(2,4,6)$\\
\hline
\end{tabular}
\end{table}
\end{prop}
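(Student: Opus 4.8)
<br>

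The plan is to apply the machinery built up in Section 2 — specifically Theorem \ref{t:key} and Corollary \ref{c:lltriangle} (or its specialization Corollary \ref{c:genprimet}) — to a ladder of subgroup inclusions, this time using the principal homomorphism from $\PGL_2$ rather than $\SO(3)$. Since $T \in S$ is not $\SO(3)$-dense, the old approach is unavailable, but $T$ is still Zariski dense in $\PGL_2(\C)$, so for any simple \emph{adjoint} $\uG$ of type $X$ we have the representation $\rho_0^{\uG}: T \to \PGL_2 \to \uG$. By Lemma \ref{l:prinval} and Lemma \ref{l:fixprifixgen} we can compute $\dim H^1(T,\frak{g})$ explicitly in terms of the exponents of $X$ and the floor functions $\lfloor e_j/n_k\rfloor$. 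The first step is therefore purely arithmetic: for each type $X$ and each of the six triples in $S$, evaluate
$$\dim H^1(T,{\rm Ad}\circ\rho_0^{\uG}) = \dim \uG - \sum_{k=1}^3\sum_{j=1}^r\left(1+2\left\lfloor\frac{e_j}{n_k}\right\rfloor\right),$$
and check whether it is positive. Whenever it is positive, Theorem \ref{t:key} immediately gives that $T$ is saturated with finite quotients of type $X$, since $\rho_0^{\uG}(T)$ is Zariski dense in $\PGL_2$ — but of course $\PGL_2 = A_1$ only when $X = A_1$, so for higher-rank $X$ we get a representation whose image is a (generally proper) subgroup, and we still need the deformation argument to move to a dense one.

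The core of the argument is thus a case analysis organized by the Zariski closure $\uH$ of the image of the principal $\rho_0$. For $X = A_r$ the principal $\PGL_2$ sits inside $\uG$ as (the image of) the irreducible $(r+1)$-dimensional representation, and one checks via Lemma \ref{l:prinval}(iii) when $\dim H^1(T,\frak{g}) > 0$; for the triples in $S$ this will fail exactly for small $r$, producing the entries $A_r$, $r \le 9$ with $(2,4,6)$ and the $A_1$, $A_2$, $A_3$ rows. For the other types one typically needs an intermediate group: choose a maximal connected semisimple $\uH < \uG$ containing the principal $\PGL_2$, verify hypotheses (a)–(c) of Corollary \ref{c:lltriangle} — the key inequality being hypothesis (c), $\dim\Epi(T,\uH) - \dim\uH < \dim Z^1(T,\frak{g}) - \dim\uG$ — and then conclude via Corollary \ref{c:genprimet}, whose hypothesis (d) reduces (c) to the strict inequality $\dim H^1(T,{\rm Ad}\circ\rho_0\!\mid_{\frak{h}}) < \dim H^1(T,{\rm Ad}\circ\rho_0\!\mid_{\frak{g}})$. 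This last inequality is again checkable by the exponent formula applied to both $\uH$ and $\uG$: it says that passing from $\uH$ to $\uG$ strictly increases the dimension count. When this holds we obtain a Zariski dense $\rho_1: T \to \uG$ with $\dim H^1(T,\frak{g}) = \dim H^1(T,{\rm Ad}\circ\rho_0\!\mid_{\frak{g}}) > 0$, hence saturation.

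I expect the main obstacle to be bounding $\dim\Epi(T,\uH)$ — or equivalently, verifying hypothesis (c)/(d) — in the cases where $\uH$ is itself of high rank and the naive estimate is not good enough. This is precisely where Proposition \ref{p:epipri} does the crucial work: it shows that for $\uH$ adjoint and $\rho_0$ the principal representation, $\dim\Epi(T,\uH) - \dim\uH \le \dim H^1(T,{\rm Ad}\circ\rho_0\!\mid_{\frak{h}})$, so the seemingly hard variety-dimension bound collapses to a cohomological one computable by exponents. The remaining delicate point is choosing the right maximal subgroup $\uH$ for each $(X, T)$ pair so that (i) the principal $\PGL_2$ of $\uG$ lands in $\uH$ and is principal there, (ii) $\uH$ is semisimple, connected, maximal, and (iii) the strict inequality on $H^1$-dimensions holds. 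For the triples in $S$ the relevant $\uH$ will be things like a principally-embedded $\PGL_2$, or products such as $B_{k-1}\times B_{r-k}\subset D_r$ as flagged in the introduction; the residual failures — where no such $\uH$ works or where $\dim H^1(T,\frak{g}) = 0$ outright (the rigid cases) — produce exactly the rows of Table \ref{tab:nso3densepe}, including the $D_r$ for $r\in\{5,7,9,13\}$ with $(2,4,6)$, the $E_6$ entry, and so on. The write-up then consists of tabulating the exponent computations and, for each surviving case, exhibiting the successful ladder.
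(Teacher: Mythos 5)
Your overall framework is the right one and matches the paper in spirit: start from the principal homomorphism $\rho_0^{\uG}\colon T\to\PGL_2\to\uG$, use Proposition \ref{p:epipri} to tame $\dim\Epi(T,\uH)$, and apply Corollary \ref{c:genprimet} so that everything reduces to the strict inequality $\dim H^1(T,{\rm Ad}\circ\rho_0\mid_{\frak h})<\dim H^1(T,{\rm Ad}\circ\rho_0\mid_{\frak g})$, computed from exponents. However, there is a genuine gap in your treatment of type $A_r$ (and in the way you propose to read off the exception table). You assert that for $A_r$ one "checks via Lemma \ref{l:prinval}(iii) when $\dim H^1(T,\frak g)>0$" and that the exceptions $A_r$, $r\le 9$, $(2,4,6)$ are exactly where this fails. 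That is false on both counts: positivity of $H^1$ for the principal representation does not by itself give saturation, because Theorem \ref{t:key} requires a \emph{Zariski dense} non-rigid representation and the principal $\PGL_2$ is a proper subgroup; and the principal $A_1$ is not even maximal in $A_r$ for $r\ge 3$ (it lies in $B_{r/2}$ or $C_{(r+1)/2}$), so Corollary \ref{c:genprimet} cannot be applied with $\uH=A_1$ there. Concretely, for $X=A_5$ and $(a,b,c)=(2,4,6)$ one has $\dim H^1(T,{\rm Ad}\circ\rho_0^{\uG}\mid_{\frak g})=4>0$, yet this case is an exception in Table \ref{tab:nso3densepe}: the paper's proof ladders through the Saxl--Seitz intermediate $C_3$, where $\dim H^1(T,\frak h)=4$ as well, so the strict inequality fails. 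The exceptions in the $A_r$ and $D_r$ rows (other than the rigid $A_1$, $A_2$ with $a=2$, and $A_3$ with $a=2$ cases) are produced by such \emph{equalities} of $H^1$-dimensions along the ladder, not by vanishing of $H^1(T,\frak g)$, so your proposed criterion would output the wrong exception set.

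Two further points you would need to repair. First, the specific intermediate subgroups matter and are dictated by Saxl--Seitz (subgroups containing a regular unipotent, so that $\rho_0^{\uH}=\rho_0^{\uG}$): $B_{r/2}$ or $C_{(r+1)/2}$ in $A_r$, $G_2$ in $B_3$, $B_{r-1}$ in $D_r$, $F_4$ in $E_6$, and a three-step ladder through $B_3$ (itself reached via $G_2$) for $D_4$ and $A_6$; your suggestion of using $B_{k}\times B_{r-k-1}\subset D_r$ as the intermediate $\uH$ here does not work in this argument, since the principal $\PGL_2$ of $D_r$ does not land in that product, so hypothesis (c) of Corollary \ref{c:genprimet} fails (that embedding is exploited only later, in Theorem \ref{t:semisimple}, starting from a different representation $\rho_1\oplus\rho_2$ and invoking Theorem \ref{t:ll} directly). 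Second, for the one-step cases ($A_2$, $B_r$ with $r\ge4$, $C_r$, $G_2$, $F_4$, $E_7$, $E_8$) the exceptions come from rigidity of the principal representation ($\dim H^1=0$ exactly when $X=A_2$ and $a=2$), which is the only place where your "positivity" test is the operative criterion.
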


\begin{proof}
Let $\uG$ be the simple adjoint algebraic group of type $X$ over $\mathbb{C}$. Note that as $T$ is locally rigid in ${\rm PGL}_2(\C)$ (see \cite{LLM1}), $T$ is not saturated with finite quotients of type $A_1$. We therefore assume that $r>1$ if $X=A_r$ and    divide the proof into three parts (in the spirit of \cite[Theorems 5.3, 5.5, 5.8 and 5.9]{LLM1}). 
 
Suppose first that $X = A_2$, $B_r$ ($r \geq 4$), $C_r$ ($r \geq 2$), $G_2$, $F_4$, $E_7$ or $E_8$. By Dynkin (see \cite{D2} and \cite{D3})
%
%
the image of the principal homomorphism ${\rm PGL}_2\rightarrow \uG$ is maximal in $\uG$. Let $\uH$ be the Zariski closure of $\rho_0^{\uG}(T)$. Since $T$ is Zariski dense in ${\rm PGL}_2(\C)$, $\uH \cong A_1$ is a maximal subgroup of $\uG$ and note that $\rho_0^{\uH}={\rho_0}^{\uG}$. It now follows from Corollary \ref{c:genprimet} that there is a nonsingular Zariski dense representation $\rho_1: T \rightarrow \uG$, except possibly if $\dim H^1(T,{\rm Ad}\circ \rho_0^{\uH}\mid_{\frak{h}})=\dim H^1(T,{\rm Ad}\circ \rho_0^{\uG}\mid_{\frak{g}})$. Now by \cite[Lemma 2.4]{LLM1}, $\dim H^1(T,{\rm Ad}\circ \rho_0^{\uH}\mid_{\frak{h}})=0$ and $\dim H^1(T,{\rm Ad}\circ \rho_0^{\uG}\mid_{\frak{g}})>0$ unless $X=A_2$ and $a=2$. In particular, $T$ is saturated with finite quotients of type $X$, unless $X=A_2$ and $a=2$.   
 
Suppose now that $X=A_r$ ($r \geq 3$, $r \neq 6$), $B_3$,  $D_r$ ($r \geq 5$), or $E_6$. Let $\uH$ be a maximal subgroup of $\uG$ of type $Y$ where $$Y=\left\{\begin{array}{ll} B_{r/2}  & \textrm{if}  \ X=A_r \ \textrm{and} \ r \ \textrm{even}\\
C_{(r+1)/2} & \textrm{if} \ X=A_r \ \textrm{and} \ r \ \textrm{odd}\\
G_2 & \textrm{if} \ X= B_3\\
B_{r-1} & \textrm{if} \ X=D_r\\
F_4 & \textrm{if} \ X=E_6.
 \end{array}\right.$$
Let $\rho_1: T \rightarrow \uH \hookrightarrow \uG$ be the nonsingular Zariski dense representation in $\Hom(T,\uH)$ obtained in the first part above. Since $\rho_0^{\uH}=\rho_0^{\uG}$ (see \cite[Theorems A and B]{SS}), it follows from Corollary \ref{c:genprimet} that there is a nonsingular Zariski dense representation $\rho_2: T \rightarrow \uG$,  except possibly if $\dim H^1(T,{\rm Ad}\circ \rho_0^{\uH}\mid_{\frak{h}})=\dim H^1(T,{\rm Ad}\circ \rho_0^{\uG}\mid_{\frak{g}})$.
A case by case check yields $$\dim H^1(T,{\rm Ad}\circ \rho_0^{\uH}\mid_{\frak{h}})<\dim H^1(T,{\rm Ad}\circ \rho_0^{\uG}\mid_{\frak{g}})$$ unless $X=A_3$ and $a=2$, or $X=A_r$, $r \in\{4,5,7,8,9\}$ and $(a,b,c)=(2,4,6)$, or $X=D_r$, $r \in \{5,7,9,13\}$ and $(a,b,c)=(2,4,6)$, or $X=D_7$ and $(a,b,c)=(2,6,6)$,  or $X=D_5$ and $(a,b,c)=(3,4,4)$, or $X=E_6$ and $(a,b,c)=(2,4,6)$. In particular, excluding these possible exceptions, $T$ is saturated with finite quotients of type $X$.   
 
Suppose finally that $X=D_4$ or $A_6$, and let $\uH$ be a maximal subgroup of $\uG$ of type $Y=B_3$. Let $\rho_2: T \rightarrow \uH \hookrightarrow \uG$ be the nonsingular Zariski dense representation in $\Hom(T,\uH)$ obtained in the second part above. Since $\rho_0^{\uH}=\rho_0^{\uG}$ (see \cite[Theorem B]{SS}), it follows from Corollary \ref{c:genprimet} that there is a nonsingular Zariski dense representation $\rho_3: T \rightarrow \uG$,  except possibly if $\dim H^1(T,{\rm Ad}\circ \rho_0^{\uH}\mid_{\frak{h}})=\dim H^1(T,{\rm Ad}\circ \rho_0^{\uG}\mid_{\frak{g}})$.
An easy check yields $$\dim H^1(T,{\rm Ad}\circ \rho_0^{\uH}\mid_{\frak{h}})<\dim H^1(T,{\rm Ad}\circ \rho_0^{\uG}\mid_{\frak{g}})$$ unless $X=A_6$ and $(a,b,c)=(2,4,6)$. In particular, excluding this possible exception, $T$ is saturated with finite quotients of type $X$.
\end{proof}

\section{The embedding $B_k \times B_{r-k-1}< D_r$}\label{s:bibi}

We now rule out some further possible exceptions to \cite[Theorem 1.1]{LLM1} for $X=D_r$ where $r \geq 4$ using an embedding of the form $B_k \times B_{r-k-1}< D_r$. Here we will  climb in a ``two-step ladder", where the second step, this time, is not via the representation induced from the principal homomorphism. 
In the process we will use the following  result.

\begin{lem}\label{l:flieo}
Let $\uG={\rm SO}_n(\C)$  and $t$ be any semisimple element of $\uG$ of finite order. Then
$$\dim \frak{g}^{{\rm Ad}(t)}=\binom{m_1}{2}+\binom{m_{-1}}{2}+\frac12\sum_{\lambda \in \mathbb{C}\setminus\{-1,1\}}m_{\lambda}^2$$
where, for $\lambda \in \mathbb{C}$, $m_{\lambda}$ denotes the multiplicity of $\lambda$ as an eigenvalue of $t$, in the standard representation of $\uG$.
\end{lem}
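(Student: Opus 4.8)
The plan is to compute $\dim \mathfrak{g}^{\mathrm{Ad}(t)}$ by decomposing the Lie algebra $\mathfrak{g} = \mathfrak{so}_n(\C)$ under the action of the cyclic group $\langle t\rangle$ and extracting the multiplicity of the trivial character. Since $t$ is semisimple of finite order, the standard module $V = \C^n$ decomposes as $V = \bigoplus_{\lambda} V_\lambda$ into $t$-eigenspaces, with $\dim V_\lambda = m_\lambda$, and the orthogonal form pairs $V_\lambda$ non-degenerately with $V_{\lambda^{-1}}$ (and restricts non-degenerately to $V_1$ and $V_{-1}$). As a $\langle t\rangle$-module, $\mathfrak{g} = \mathfrak{so}(V) = \wedge^2 V^*$, or more conveniently $\mathfrak{g} \cong \wedge^2 V$ after using the form to identify $V \cong V^*$; either way $t$ acts on $\mathfrak{g}$ and I want $\dim \mathfrak{g}^{\mathrm{Ad}(t)}$.

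The key computation is as follows. Using $\mathfrak{g} = \{A \in \mathrm{End}(V) : A + A^{\mathrm{t}} = 0\}$ with the transpose taken relative to the form, $\mathrm{End}(V) = \bigoplus_{\lambda,\mu} \Hom(V_\lambda, V_\mu)$ and $\mathrm{Ad}(t)$ acts on $\Hom(V_\lambda, V_\mu)$ by the scalar $\mu\lambda^{-1}$. So the invariants in $\mathrm{End}(V)$ are $\bigoplus_\lambda \mathrm{End}(V_\lambda)$, of dimension $\sum_\lambda m_\lambda^2$; I then need the $(-1)$-eigenspace of the transpose involution on this invariant subspace. The form identifies $V_\mu^* \cong V_{\mu^{-1}}$, so the transpose sends $\Hom(V_\lambda, V_\lambda) \to \Hom(V_{\lambda^{-1}}, V_{\lambda^{-1}})$. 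For $\lambda \notin \{1,-1\}$, the blocks $\mathrm{End}(V_\lambda)$ and $\mathrm{End}(V_{\lambda^{-1}})$ are swapped by transpose, so the skew part contributes $\tfrac12(m_\lambda^2 + m_{\lambda^{-1}}^2) = m_\lambda^2$ per unordered pair $\{\lambda, \lambda^{-1}\}$, i.e. $\tfrac12 \sum_{\lambda \neq \pm 1} m_\lambda^2$ in total. For $\lambda = 1$ the transpose is an honest involution on $\mathrm{End}(V_1) = \mathfrak{gl}(V_1)$ whose $(-1)$-eigenspace is $\mathfrak{so}(V_1)$ of dimension $\binom{m_1}{2}$; likewise $\lambda = -1$ gives $\binom{m_{-1}}{2}$. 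Summing these three contributions yields exactly the claimed formula.

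The main point requiring care is the interaction between the $\mathrm{Ad}(t)$-eigenspace decomposition and the transpose involution — specifically, checking that the orthogonal form restricts non-degenerately on $V_1$ and $V_{-1}$ (so those blocks really give $\mathfrak{so}$ of the appropriate size rather than some degenerate thing), and that it pairs $V_\lambda$ with $V_{\lambda^{-1}}$ for the remaining eigenvalues (using that $t$ preserves the form, so $\langle tv, tw\rangle = \langle v,w\rangle$ forces $\langle v, w\rangle = 0$ unless $\lambda\mu = 1$). Once the form's behavior on the eigenspaces is pinned down, the rest is bookkeeping over the set of eigenvalues, paired up under $\lambda \mapsto \lambda^{-1}$. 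I would present it by first writing the eigenspace decomposition and the pairing properties of the form, then computing $\dim\mathfrak{g}^{\mathrm{Ad}(t)}$ block by block as above; no serious obstacle is anticipated, as this is a standard centralizer-dimension computation in the orthogonal Lie algebra.
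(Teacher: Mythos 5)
Your proof is correct and is essentially the paper's argument: the paper simply notes that eigenvalues $\lambda\neq\pm1$ pair with $\lambda^{-1}$ of equal multiplicity and that $\mathfrak{g}\cong\Lambda^{2}(W)$, which gives the same block-by-block count you carry out (your $\mathrm{End}(V)$-plus-transpose bookkeeping is just a more explicit rendering of counting weight-$0$ vectors in $\Lambda^{2}(W)$). No gap; your extra care about the form pairing $V_\lambda$ with $V_{\lambda^{-1}}$ and being non-degenerate on $V_{\pm1}$ is the detail the paper leaves implicit.
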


\begin{proof}
Note that if $\lambda$ is an eigenvalue of $t$ with $\lambda \neq \pm1$, then $\overline{\lambda}=\lambda^{-1}$ is also an eigenvalue with the same multiplicity. The lemma now  follows from the fact that the Lie algebra $\frak{g}$  of $\uG$ is $\Lambda^2(W)$, where $W$ denotes the natural module for $\uG$.
\end{proof}




We now make the following useful observation.
Let $\uH_1$ be a simple  adjoint algebraic group over $\mathbb{C}$ of type $B_k$ where $k\geq 2, k \neq 3$, and consider $\rho_0^{\uH_1}: T \rightarrow \uH_1$,  the representation induced from the principal homomorphism ${\rm PGL}_2\rightarrow \uH_1$.
Since $k\neq 3$, the image of the principal homomorphism ${\rm PGL}_2\rightarrow \uH_1$ is a maximal subgroup of $\uH_1$ (see \cite{D2} and \cite{D3}). As $T$ is Zariski dense in ${\rm PGL}_2$, it follows that
$\overline{\rho_0^{\uH_1}(T)}\cong A_1$ is a maximal subgroup of $\uH_1$.
By \cite[Lemma 2.4]{LLM1}, $\dim H^1(T,{\rm Ad}\circ\rho_0^{\uH_1})>0$ unless $k=2$ and $b=3$. Since every representation $T\rightarrow {\rm PGL}_2$ is locally rigid, Corollary \ref{c:lltriangle} yields (if $ k>2$ or $b\neq 3$) a nonsingular Zariski dense representation $\rho_1:T \rightarrow \uH_1$
in the same irreducible component of $\Hom(T,\uH_1)$ containing $\rho_0^{\uH_1}$ and satisfying
 $$\dim H^1(T,{\rm Ad}\circ \rho_1\mid_{\frak{h}_1})= \dim H^1(T,{\rm Ad}\circ \rho_0^{\uH_1}\mid_{\frak{h}_1}).$$\\
If  $T=T_{a,b,c}$ is a hyperbolic triangle group with $b\neq 3 $ and $(a,c)\neq (2,5)$, and $\uH_1$ is a simple adjoint algebraic group over $\C$ of type $B_3$, one can consider the nonsingular Zariski dense representation $\rho_{2,\uH_1}:T \rightarrow H_1$
obtained by deforming in a two-step ladder the representation $T \rightarrow {\rm PGL}_2 \rightarrow G_2 \hookrightarrow H_1$ induced  from the principal homomorphism ${\rm PGL}_2 \rightarrow G_2$ (see \cite[Theorem 5.8]{LLM1} and Proposition \ref{p:nonso3dense} and their proofs).  Following Corollary \ref{c:genprimet}, $\rho_{2,\uH_1}$ is in the irreducible component of $\Hom(T,\uH_1)$ containing $\rho_0^{\uH_1}$ and 
$$ \dim H^1(T,{\rm Ad}\circ \rho_{2,\uH_1}\mid_{\frak{h}_1})= \dim H^1(T,{\rm Ad}\circ \rho_0^{\uH_1}\mid_{\frak{h}_1}).$$

\begin{thm}\label{t:semisimple}
Let $T=T_{a,b,c}$ be  a hyperbolic triangle group and $\uG={\rm PSO}_{2r}(\mathbb{C})$ be the simple adjoint algebraic group over $\C$ of type $X=D_r$ where $r\geq 4$. Let $\uH={\rm SO}_{2k+1}(\C)\times {\rm SO}_{2r-2k-1}(\C)<\uG$ where $1\leq k \leq \lfloor r/2\rfloor$, i.e. $\uH=\uH_1\times \uH_2$ where $\uH_1$ and $\uH_2$ are of types $B_k$ and $B_{r-k-1}$, respectively.  Suppose $r \neq 2k+1$. Furthermore if $b=3$ assume  $\{2,3\}\cap \{k, r-k-1\}=\emptyset$ and if $(a,c)=(2,5)$ assume $3 \not \in \{k, r-k-1\}$.  

Let $\rho_1: T \rightarrow \uH_1$ be the representation obtained by deforming the representation $\rho_0^{\uH_1}$ induced from the principal homomorphism ${\rm PGL}_2\rightarrow \uH_1$ if $k \not \in  \{1,3\}$ (if $k=1$, take  $\rho_1$ to be the standard representation, and if $k=3$, take $\rho_1$ to be the representation $\rho_{2,B_3}: T \rightarrow B_3$ obtained by deforming in a two-step ladder the representation $T\rightarrow {\rm PGL}_2\rightarrow G_2$ induced from the principal homomorphism ${\rm PGL}_2 \rightarrow G_2$), $\rho_2: T\rightarrow \uH_2$ be the representation obtained   by deforming the representation $\rho_0^{\uH_2}$ induced from the principal homomorphism ${\rm PGL}_2\rightarrow \uH_2$  if $k \neq 3$ (if $k=3$, take $\rho_2$ to be the representation $\rho_{2,B_3}$), 
and let $\rho=\rho_1\oplus \rho_2: T \rightarrow \uH=\uH_1\times \uH_2$.
 Then the following assertions hold:
\begin{enumerate}[(i)]
\item $\uH$ is the Zariski closure of $\rho(T)$.
\item $\rho$ is a nonsingular point of $\Hom(T,\uH)$ and $\Hom(T,\uG)$.
\item If $\dim H^1(T, \frak{h}_1)+\dim H^1(T,\frak{h}_{2}) < \dim H^1(T, {\rm Ad}\circ \rho\mid_{\frak{g}})$ then there exists a nonsingular representation $\sigma: T \rightarrow \uG$  in the same irreducible component of $\Hom(T,\uG)$ as $\rho$, with Zariski dense image and satisfying
$$\dim H^1(T,{\rm Ad}\circ \sigma\mid_{\frak{g}})= \dim H^1(T,{\rm Ad}\circ \rho \mid_{\frak{g}}).$$
\item{If $(X,(a,b,c))$ is as in Table \ref{tab:semisimple} below then $T$ is saturated with finite  quotients of type $X$.}
\end{enumerate}
\end{thm}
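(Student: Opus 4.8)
The plan is to establish (i), (ii) and (iii) in turn as general structural facts and then to deduce (iv) by explicit cohomology computations. Throughout, $\uH={\rm SO}_{2k+1}(\C)\times{\rm SO}_{2r-2k-1}(\C)=\uH_1\times\uH_2$ is a connected semisimple subgroup of $\uG={\rm PSO}_{2r}(\C)$ with trivial centre, and, since $r\neq 2k+1$ makes the two orthogonal blocks have distinct odd dimensions, $\uH$ is a maximal subgroup of $\uG$ (the classical $\mathcal{C}_1$-type maximal subgroup stabilising a non-degenerate subspace). For (i): each $\rho_i$ has Zariski dense image in $\uH_i$ by its construction---the dense-image output of Corollary~\ref{c:lltriangle} when $k\notin\{1,3\}$, the standard (Fuchsian) embedding $T\hookrightarrow{\rm PGL}_2(\C)\cong{\rm SO}_3(\C)$ when $k=1$, and the two-step deformation $\rho_{2,B_3}$ when $k=3$. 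Hence the identity component of the Zariski closure $\uL$ of $\rho(T)$ in $\uH_1\times\uH_2$ surjects onto both factors; as $\uH_1$ and $\uH_2$ are simple and, because $k\neq r-k-1$, non-isogenous (indeed of different dimensions), the only closed connected subgroup with surjective projections onto both factors is the whole product, so $\uL=\uH$.

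For (ii): by (i), the invariants of ${\rm Ad}\circ\rho$ on $\frak{g}$ form $\frak{g}^{\uH}={\rm Lie}\,Z_{\uG}(\uH)$. The natural module $\C^{2r}$ splits as the sum of two non-isomorphic irreducible $\uH$-modules (the natural module of $\uH_1$ with $\uH_2$ trivial, and conversely), so $Z_{{\rm GL}_{2r}}(\uH)$ consists of the scalars on each block; intersecting with ${\rm SO}_{2r}$ and passing to ${\rm PSO}_{2r}$ forces $Z_{\uG}(\uH)=1$. Thus ${\rm Ad}\circ\rho$ has no invariants on $\frak{g}$, hence (by self-duality of $\frak{g}$) none on $\frak{g}^*$, so $\rho$ is a nonsingular point of $\Hom(T,\uG)$ by Theorem~\ref{t:weil}(iv). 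Since $\Hom(T,\uH)=\Hom(T,\uH_1)\times\Hom(T,\uH_2)$, nonsingularity of $\rho=(\rho_1,\rho_2)$ there is equivalent to nonsingularity of each $\rho_i$ in $\Hom(T,\uH_i)$, which holds by the same criterion (each $\rho_i$ has dense image in the simple group $\uH_i$, the case $k=3$ being already recorded).

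For (iii): we apply Corollary~\ref{c:lltriangle} with $\rho$ in the role of its ``$\rho_0$''; note that here $\rho$ is \emph{not} the principal representation $\rho_0^{\uG}$ of $\uG$---the composite ${\rm PGL}_2\to\uH\hookrightarrow\uG$ is non-principal even though both coordinates are principal---which is exactly what opens up new cases. Conditions (a), (b) are the semisimplicity/connectedness and maximality of $\uH$ above, so only (c) remains: $\dim{\rm Epi}(T,\uH)-\dim\uH<\dim Z^1(T,\frak{g})-\dim\uG$, whose right side equals $\dim H^1(T,{\rm Ad}\circ\rho\mid_{\frak{g}})$ by Theorem~\ref{t:weil} (no invariants, finite centraliser). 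For the left side, a homomorphism $T\to\uH$ with dense image has both coordinates with dense image, so ${\rm Epi}(T,\uH)\subseteq{\rm Epi}(T,\uH_1)\times{\rm Epi}(T,\uH_2)$ and hence $\dim{\rm Epi}(T,\uH)-\dim\uH\le\sum_{i=1,2}\bigl(\dim{\rm Epi}(T,\uH_i)-\dim\uH_i\bigr)$; Proposition~\ref{p:epipri} (each $\uH_i$ being simple adjoint) bounds this by $\sum_i\dim H^1(T,{\rm Ad}\circ\rho_0^{\uH_i}\mid_{\frak{h}_i})=\dim H^1(T,\frak{h}_1)+\dim H^1(T,\frak{h}_2)$, the last equality because $\rho_i$ lies in the component of $\Hom(T,\uH_i)$ containing $\rho_0^{\uH_i}$ (both sides vanishing when $k=1$). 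Thus (c) follows from the hypothesis of (iii), and Corollary~\ref{c:lltriangle} produces the nonsingular Zariski dense $\sigma$ with $\dim H^1(T,{\rm Ad}\circ\sigma\mid_{\frak{g}})=\dim H^1(T,{\rm Ad}\circ\rho\mid_{\frak{g}})$.

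For (iv): for each entry $(D_r,(a,b,c))$ of Table~\ref{tab:semisimple} I would choose an admissible $k$ (with $1\le k\le\lfloor r/2\rfloor$, $r\neq 2k+1$, $\{2,3\}\cap\{k,r-k-1\}=\emptyset$ when $b=3$, and $3\notin\{k,r-k-1\}$ when $(a,c)=(2,5)$) and verify the hypothesis of (iii). Here $\dim H^1(T,\frak{h}_i)$ comes from Lemma~\ref{l:prinval}(iii) via the exponents $1,3,5,\dots$ of $\uH_i$ (and is $0$ for a $B_1$-factor), while $\dim H^1(T,{\rm Ad}\circ\rho\mid_{\frak{g}})=\dim\frak{g}-\sum_{t\in\{x,y,z\}}\dim\frak{g}^t$ is obtained from Lemma~\ref{l:flieo}: for $t$ of order $n$ the element $\rho(t)$ acts on $\C^{2r}$ with eigenvalues $\zeta^{\,j}$, with $j$ running over $\{-k,\dots,k\}$ and over $\{-(r-k-1),\dots,r-k-1\}$ for a primitive $n$-th root of unity $\zeta$ (the eigenvalue pattern of the principal ${\rm PGL}_2$ on ${\rm Sym}^{2k}\oplus{\rm Sym}^{2(r-k-1)}$, unchanged under deformation since $\rho$ and the principal representations have conjugate restrictions to a cyclic subgroup), and these multiplicities $m_\lambda$ feed into Lemma~\ref{l:flieo}. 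Once $\dim H^1(T,\frak{h}_1)+\dim H^1(T,\frak{h}_2)<\dim H^1(T,{\rm Ad}\circ\rho\mid_{\frak{g}})$ is checked, part (iii) gives a nonsingular Zariski dense $\sigma:T\to\uG$ with $\dim H^1(T,{\rm Ad}\circ\sigma)>0$, and Theorem~\ref{t:key} shows $T$ is saturated with finite quotients of type $D_r$. The main obstacle is this final bookkeeping: the ``gain'' $\dim H^1(T,{\rm Ad}\circ\rho\mid_{\frak{g}})-\sum_i\dim H^1(T,\frak{h}_i)$ is produced entirely by the cross-block part of $\frak{g}=\Lambda^2(\C^{2r})$, and one must arrange an admissible $k$ making it strictly positive for \emph{every} triple in the table---the small ranks and small triples where no admissible choice works being precisely the residual exceptions recorded in Table~\ref{tab:main}.
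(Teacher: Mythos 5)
Your argument matches the paper's proof essentially step for step: maximality of $\uH$ (which the paper proves directly via the $\uH$-irreducibility of $\frak{g}/\frak{h}$ and triviality of $Z_{\uG}(\uH)$ rather than quoting the classical subspace-stabilizer fact), Goursat's lemma for (i), the no-invariants/Weil nonsingularity criterion for (ii), the bound $\dim \Epi(T,\uH)-\dim\uH\leq \dim H^1(T,\frak{h}_1)+\dim H^1(T,\frak{h}_2)$ via Proposition \ref{p:epipri} fed into Theorem \ref{t:ll}/Corollary \ref{c:lltriangle} for (iii), and the eigenvalue computation through Lemma \ref{l:flieo} for (iv). The only differences are presentational: the paper computes $\dim H^1(T,{\rm Ad}\circ\rho\mid_{\frak{g}})$ by passing to $\sigma_0=\rho_0^{\uH_1}\oplus\rho_0^{\uH_2}$ via Corollary \ref{c:goingmaxtofull}(ii) (checking that ${\rm Ad}\circ\sigma_0$ has no invariants), where you use the conjugacy of restrictions to cyclic subgroups directly, and the paper records the explicit admissible choices of $(\uH_1,\uH_2)$ for every table entry (Table \ref{tab:honehtwo}) while you leave that finite bookkeeping implicit.
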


\begin{table}[h]
\caption{Further possible exceptions to \cite[Theorem 1.1]{LLM1} which are ruled out in Theorem \ref{t:semisimple}}\label{tab:semisimple}
\center
\begin{tabular}{|l|l|l|}
\hline
$X$& $(a,b,c)$ & $r$\\
\hline
 $D_r$ & $(2,3,7)$ & $r \in\{7,8,10,11,13,15,16,17,19,22,23,25,29,31,37,43\}$\\
$(r\geq 4)$& $(2,3,8)$& $r\in \{7,9,10,11,13,17,19,25\}$ \\
& $(2,3,9)$ & $r\in \{7,10,11,13,19\}$\\
& $(2,3,10)$ & $r\in \{7,11,13\}$\\
& $(2,3,11)$ & $r\in \{7,13\}$\\
& $(2,3,12)$ & $r\in \{7,13\}$\\
& $(2,3,c)$, $c \geq 13$ & $r=7$\\
& $(2,4,5)$ & $r \in \{4,6,7,9,11,13,17,21\}$\\
& $(2,4,6)$ & $r\in\{5,7,9,13\}$\\
& $(2,4,7)$ & $r \in \{5,9\} $\\
& $(2,4,8)$ & $r\in \{5,9\}$\\
& $(2,4,c)$, $c \geq 9$ & $r=5$\\
& $(2,5,5)$ & $r \in \{4,6,7,11\}$\\
& $(2,5,6)$ & $r  = 7$\\
& $(2,6,6)$ & $r=7$\\
& $(3,3,4)$ & $r \in\{7,10,13\}$\\
& $(3,3,5)$ & $r =7$\\
& $(3,3,6)$ & $r = 7$\\
& $(3,4,4)$& $r=5$\\
& $(4,4,4)$& $r=5$\\
\hline
\end{tabular}
\end{table}

\begin{remk}
If $(X,(a,b,c))$  with $X=D_r$ is a possible exception to \cite[Theorem 1.1]{LLM1} not excluded in Proposition \ref{p:nonso3dense} and not figuring in Table \ref{tab:semisimple}, then one cannot use Theorem \ref{t:semisimple} to exclude it.  

\end{remk}

\begin{proof}
Since $r \neq 2k+1$, $\uH$ is a maximal subgroup of $\uG$.  Indeed, $\Lie(\uG(\C))/\Lie(\uH(\C))$ is an irreducible representation of $\uH$,
namely the tensor product of the natural representations of the factors $\uH_1$ and $\uH_2$.  Therefore, any algebraic group $\uK$ intermediate between $\uH$ and $\uG$
either has the same Lie algebra as $\uH$ or the same Lie algebra as $\uG$ (in which case it equals $\uG$).   Thus, $\uK^\circ = \uH$.  As $\uH_1$ and $\uH_2$
have distinct Dynkin diagrams without non-trivial automorphisms, all automorphisms of $\uH$ are inner.  It follows that $\uK$ is contained in $\uH Z_{\uG}(\uH)$.
If $z\in \SO(2r,\C)$ lies over an element of $Z_{\uG}(\uH)(\C)$, then the commutator of $z$ with any element of 
$$\uH(\C) = \SO(2k+1,\C)\times \SO(2r-2k-1,\C)$$ 
lies in $\{\pm I\}$.
As $\uH(\C)$ is connected, this means that the commutator is always $I$.  By Schur's lemma, $z$ must be diagonal with entries
$$(\underbrace{\lambda_1,\ldots,\lambda_1}_{2k+1},\underbrace{\lambda_2,\ldots\lambda_2}_{2r-2k-1}),$$
and then $z\in \SO(2r,\C)$ implies $\lambda_1=\lambda_2 = \pm 1$.  Thus, $z$ lies over the identity in $\uG(\C)$, and $\uK = \uH$.

Since  $\uH_1$ and $\uH_2$ are the Zariski closures of $\rho_1(T)$ and $\rho_2(T)$,  respectively, $\overline{\rho(T)}$ is mapped onto both $\uH_1$ and $\uH_2$. These are non-isomorphic simple groups (since $r \neq 2k+1$), so by Goursat's lemma, $\overline{\rho(T)}=\uH$. This shows the first part.  

The second part now follows from Corollary \ref{c:goingmaxtofull}(i).

For the third part: As $\Hom(T,\uH)=\Hom(T,\uH_1)\times \Hom(T,\uH_2)$ we have $$\dim {\rm Epi}(T,\uH)\leq \dim {\rm Epi}(T,\uH_1)+\dim {\rm Epi}(T,\uH_2).$$
Now by Proposition \ref{p:epipri} 
$$ \dim {\rm Epi}(T,\uH_i)-\dim \uH_i \leq \dim H^1(T,\frak{h}_i)  \quad \textrm{for} \ i=1,2.$$ Since $\dim \uH=\dim \uH_1+\dim \uH_2$ we get
$$ \dim {\rm Epi}(T,\uH)-\dim \uH \leq  \dim H^1(T,\frak{h}_1)+ \dim H^1(T,\frak{h}_2).$$
The third part now follows immediately from Theorem \ref{t:ll}.
The final part will follow from Theorem \ref{t:key} once we show that for $(X,(a,b,c))$ as in Table \ref{tab:semisimple}, we  can find $\uH_1$ and $\uH_2$ as above, satisfying

\begin{equation}\label{e:satisfy}
\dim H^1(T, \frak{h}_1)+\dim H^1(T,\frak{h}_{2}) < \dim H^1(T, {\rm Ad}\circ \rho\mid_{\frak{g}}).\end{equation}
Note that $\dim H^1(T, \frak{h}_1)+ \dim H^1(T, \frak{h}_2)$ can be easily calculated (see Lemma \ref{l:prinval}(iii)).  Let us concentrate on the computation of $\dim H^1(T,{\rm Ad}\circ \rho \mid_{\frak{g}})$. 
We claim that 
\begin{equation}\label{e:rhosigmaz}
 \dim H^1(T,{\rm Ad}\circ \rho \mid_{\frak{g}})= \dim H^1(T,{\rm Ad}\circ \sigma_0 \mid_{\frak{g}})
\end{equation}
 where $\sigma_0=\rho_0^{\uH_1}\oplus \rho_0^{\uH_2}$. By construction $\rho$ and $\sigma_0$ are in a common irreducible component of $\Hom(T,\uH)$ and therefore in a common irreducible component of $\Hom(T,{\uG})$. Since $\overline{\rho(T)}=\uH$ is a maximal subgroup of $\uG$, the claim will follow from Corollary \ref{c:goingmaxtofull}, once we show that ${\rm Ad}\circ\sigma_0$ has no invariants on $\frak{g}$. Note that $\sigma_0(T)$  is a subgroup of $\uH=\uH_1\times \uH_2$ of type $A_1\times A_1$. Since $\sigma_0$ is the direct sum of two irreducible representations of $T$, it follows from Schur's lemma that $Z_{\uH}(\sigma_0(T))$ consists of diagonal matrices of the form $(c_1I_{2k+1},c_2I_{2r-2k-1})$ where $c_1,c_2 \in \C$ satisfy $c_1^{2k+1}=c_2^{2r-2k-1}=1$. As $\uH<\uG={\rm PSO}_{2r}(\C)$, we get $c_1=c_2=1$ and so $Z_{\uH}(\sigma_0(T))$ is trivial. As $\uH$ is a maximal subgroup of $\uG$, $Z_{\uG}(\sigma_0(T))$ is a cyclic group. It follows that  $Z_{\uG}(\sigma_0(T))$ is trivial and so ${\rm Ad}\circ\sigma_0$  has no invariants on $\frak{g}$. This establishes the claim.  \\
Theorem \ref{t:weil}(ii) and (\ref{e:rhosigmaz}) now yield
\begin{equation}\label{e:rhosigmazf}
 \dim H^1(T,{\rm Ad}\circ \rho \mid_{\frak{g}})= \dim \frak{g}-(\dim \frak{g}^{{\rm Ad}\circ \sigma_0(x)}+\dim \frak{g}^{{\rm Ad}\circ \sigma_0(y)}+\dim \frak{g}^{{\rm Ad}\circ \sigma_0(z)}).
\end{equation}
Let $r_1=k$ and $r_2=r-k-1$ be the ranks of $\uH_1$ and $\uH_2$ respectively. For $i \in \{1,2\}$, the eigenvalues of $\rho_0^{\uH_i}(x)$ are:
$$\lambda^{-2r_i}, \lambda^{-2(r_i-1)}, \dots, \lambda^{0},\dots, \lambda^{2(r_i-1)}, \lambda^{2r_i}$$ where $\lambda$ is a primitive root of unity of degree  $2a$ (and similarly for $\rho_0^{\uH_i}(y)$ and $\rho_0^{\uH_i}(z)$ with $2b$ and $2c$, respectively).\\

Hence, the eigenvalues for $\sigma_0(x)$ are (recall $r_1<r_2$):
$$1,1, \lambda^{-2},\lambda^{-2},\lambda^{2},\lambda^{2},\dots,\lambda^{-2r_1},\lambda^{-2r_1},\lambda^{2r_1},\lambda^{2r_1},\lambda^{-2(r_1+1)},\lambda^{(2r_1+1)},\dots,\lambda^{-2r_2},\lambda^{2r_2}$$
 where $\lambda$ is a primitive root of unity of degree $2a$ (and similarly for $\sigma_0(y)$ and $\sigma_0(z)$ with $2b$ and $2c$, respectively).\\

Using Lemma \ref{l:flieo}, we can easily derive 
$$\dim \frak{g}^{{\rm Ad}\circ \sigma_0(x)}, \quad \dim \frak{g}^{{\rm Ad}\circ \sigma_0(y)} \quad \textrm{and}  \quad \dim \frak{g}^{{\rm Ad}\circ \sigma_0(z)},$$
and   then  (\ref{e:rhosigmazf}) yields $\dim H^1(T,{\rm Ad}\circ \rho \mid_{\frak{g}})$. 
 
We give  in Table \ref{tab:honehtwo} below the pairs $(X,(a,b,c))$ possibly excluded in \cite[Theorem 1.1]{LLM1} or Proposition \ref{p:nonso3dense} for which there exist $\uH_1$ and $\uH_2$ satisfying (\ref{e:satisfy}). The details can be easily checked.

\begin{table}[h]
\caption{Some pairs $(X,(a,b,c))$  for which there exist $\uH_1$ and $\uH_2$ satisfying (\ref{e:satisfy})}\label{tab:honehtwo}
\begin{tabular}{|l|l|l|l|}
\hline
$X$ & $(a,b,c)$ & $\uH_1$ & $\uH_2$  \\
\hline
$D_4$ & $(2,b,5)$ & $B_1$ & $B_2$  \\
$D_5$ & $(2,4,c)$, $c \geq 6$ & $B_1$ & $B_3$ \\
& $(3,4,4)$ & $B_1$ & $B_3$ \\
& $(4,4,4)$ & $B_1$ & $B_3$  \\
$D_6$ & $(2,b,5)$ & $B_1$ & $B_4$ \\
$D_7$ & $(2,3,c)$, $c \geq 7$ & $B_1$ & $B_5$ \\
& $(3,3,c)$, $4\leq c\leq 6$ & $B_1$ & $B_5$ \\
& $(2,4,c)$, \ $c \in \{5,6\}$ & $B_2$ & $B_4$ \\
& $(2,b,c)$, \ $\{b,c\} \subseteq \{5,6\}$ & $B_2$  & $B_4$ \\
$D_8$& $(2,3,7)$ & $B_1$ & $B_{6}$ \\
$D_9$ & $(2,3,8)$ & $B_1$ & $B_7$ \\
& $(2,4,c)$, \ $5\leq c \leq 8$ & $B_2$ &$B_6$\\
$D_{10}$ & $(2,3,c)$, $7 \leq c \leq 9$ & $B_4$ & $B_5$ \\
& $(3,3,4)$ & $B_4$ & $B_5$  \\
$D_{11}$ & $(2,3,c)$, $7 \leq c \leq 10$ & $B_4$ & $B_6$  \\
&  $(2,b,5)$ & $B_4$ & $B_6$ \\
$D_{13}$ & $(2,3,c)$, \ $7 \leq c\leq 12$ & $B_5$ & $B_7$  \\
& $(2,4,c)$, \ $c\in \{5,6\}$ & $B_5$ & $B_7$ \\
& $(3,3,4)$ & $B_5$ & $B_7$ \\
$D_{15}$ & $(2,3,7)$ &  $B_6$ & $B_8$  \\
$D_{16}$ & $(2,3,7)$ & $B_7$ & $B_8$ \\
$D_{17}$ & $(2,3,c)$, \ $c\in \{7,8\}$ & $B_7$ & $B_9$ \\
& $(2,4,5)$ & $B_7$ & $B_9$  \\
$D_{19}$&  $(2,3,c)$, $7\leq c\leq 9$ & $B_8$ & $B_{10}$ \\
$D_{21}$ & $(2,4,5)$ & $B_9$ & $B_{11}$ \\
$D_{25}$ & $(2,3,c)$, \ $c\in \{7,8\}$ & $B_{11}$ & $B_{13}$ \\
$D_r$ & $(2,3,7)$ & $B_{\lfloor r/2 \rfloor-1}$ & $B_{r-\lfloor r/2 \rfloor}$   \\
$r\in\{22,23,29,31,37,43\}$ & & &  \\
\hline

\end{tabular}
\end{table}
\end{proof}



\begin{remk}
One could try to exclude some further possible exceptions to \cite[Theorem 1.1]{LLM1} or Proposition \ref{p:nonso3dense} when $X=A_{r}$ ($r$ odd) through an embedding of the type $\uH={\rm PSO}_{r+1}(\C)<{\rm PSL}_{r+1}(\C)$. Either by starting with the representation $T \rightarrow \uH$ induced from the principal homomorphism ${\rm PGL}_2\rightarrow \uH$ (if $(D_{(r+1)/2},(a,b,c))$ is not a possible exception to \cite[Theorem 5.5]{LLM1} or Proposition \ref{p:nonso3dense}), or by starting with a representation $T \rightarrow \uH$ obtained from a representation $T \rightarrow {\rm SO}_{2k+1}(\C)\times {\rm SO}_{r-2k}(\C)$ (see Theorem \ref{t:semisimple}). However, it happens that these methods do not allow us to exclude further possible exceptions to \cite[Theorem 1.1]{LLM1} or Proposition \ref{p:nonso3dense}. 
\end{remk}

\section{The alternating group method}\label{s:altm}


In this section we will use a different homomorphism $\rho_0: T\rightarrow X(\mathbb{C})$ as a starting point for the deformation space, when $X=B_r$ or $D_r$. We let $m=2r+2$ or $2r+1$ according respectively as $X=B_r$ or $D_r$. We will take a suitable homomorphism $\rho_1$ from $T$ onto ${\rm Alt}_{m}$ and then $\rho_2: {\rm Alt}_{m}\rightarrow {\rm SO}_{m-1}(\C)$, the standard embedding (i.e. the action induced on $\mathbb{C}^{m-1}$ from the natural action of ${\rm Sym}_m$ on $\mathbb{C}^m$). We will then show that $\rho_0=\rho_2\circ \rho_1$ has a nontrivial deformation space of Zariski dense representations. This can handle many of the cases $(T,X)$ where $X=B_r$  or $D_r$ (see Lemma \ref{l:c} below), but we will only bother to check and prove the cases that have not been worked out by the principal homomorphism method or by deforming a representation of the form $T\rightarrow B_{k}\times B_{r-k-1}$. \\

 

\begin{lem}\label{l:c}
Let $X=B_r$ (respectively, $D_r$) and $H={\rm Alt}_{m}$ where $m=2r+2$ (respectively, $2r+1$). Let $\rho_2$ be the standard representation of $H$ into ${\rm SO}_{m-1}(\C)$. If there exists an epimorphism $\rho_1$ from $T$ to $H$ and  $\rho_0=\rho_2\circ\rho_1$ is such that $\dim H^1(T,{\rm Ad}\circ\rho_0)>0$ then $T$ is saturated with finite quotients of type $X$.
\end{lem}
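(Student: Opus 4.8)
The plan is to apply Theorem \ref{t:key}: it suffices to produce a simple algebraic group $\uG$ over $\C$ of type $X$ and a Zariski dense, non-locally-rigid representation $T\to\uG$. Here we already have a candidate, namely a Zariski dense $\rho$ obtained by deforming $\rho_0=\rho_2\circ\rho_1$, so the real content is to check that $\rho_0$ lies on an irreducible component of $\Hom(T,\uG)$ of dimension strictly larger than $\dim\uG$, and then invoke a standard deformation argument to move to a nearby Zariski dense point without dropping $\dim H^1$.

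First I would set $\uG=X(\C)$, i.e. $\uG=\SO_{m-1}(\C)$ (adjoint form), and consider the finite subgroup $H=\mathrm{Alt}_m$ sitting inside $\uG$ via the standard representation $\rho_2$ on the deleted permutation module $\C^{m-1}$. Since $\rho_1\colon T\twoheadrightarrow H$ is surjective and $\rho_2$ is an irreducible $\C H$-module with no nonzero $H$-fixed vectors, the composite $\rho_0$ has Zariski-dense-in-$H$ image; in particular $Z_{\uG}(\rho_0(T))=Z_{\uG}(H)$ is finite (by Schur's lemma applied to the irreducible module $\C^{m-1}$, together with the fact that $H$ is not contained in a proper normal subgroup), so $\mathrm{Ad}\circ\rho_0$ has no invariants on $\frak g$, and since $\uG$ is simple over $\C$ its adjoint module is self-dual, hence $(\mathrm{Ad}\circ\rho_0)^*$ also has no invariants on $\frak g^*$. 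By Theorem \ref{t:weil}(iv), $\rho_0$ is then a nonsingular point of $\Hom(T,\uG)$, so it lies on a unique irreducible component $\uW$, and by Theorem \ref{t:weil}(i) together with $\dim Z_{\uG}(\rho_0(T))=0$ we get $\dim\uW=\dim Z^1(T,\mathrm{Ad}\circ\rho_0)=\dim H^1(T,\mathrm{Ad}\circ\rho_0)+\dim\uG$. By hypothesis $\dim H^1(T,\mathrm{Ad}\circ\rho_0)>0$, so $\dim\uW>\dim\uG$.

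The key step is now to deduce that $\uW$ contains a Zariski dense representation. This is essentially the argument already carried out in the proof of Theorem \ref{t:ll}: the orbit of $\rho_0$ under the $\uG$-conjugation action has dimension $\dim\uG-\dim Z_{\uG}(\rho_0(T))=\dim\uG<\dim\uW$, so the generic point of $\uW$ is \emph{not} conjugate to $\rho_0$; more precisely, the union of $\uG$-conjugates of $\Hom(T,H)$-type representations (or of any fixed lower-dimensional family) cannot be dense in $\uW$. Hence the representation $\rho$ attached to the generic point of $\uW$ has image whose Zariski closure is strictly larger than any finite group, and a dimension count forces it to be all of $\uG$: indeed if the closure were a proper positive-dimensional subgroup $\uM\lneq\uG$ one bounds $\dim\uW$ by $\dim\uG+\dim\Hom(T,\uM)-\dim\uM\le\dim\uG+(\text{something}<\dim H^1(T,\mathrm{Ad}\circ\rho_0))$, contradicting $\dim\uW=\dim H^1(T,\mathrm{Ad}\circ\rho_0)+\dim\uG$ — in practice one cites the Breuillard–Guralnick–Larsen result as in Theorem \ref{t:ll} to conclude the closure contains $H$, and then that $H$ is Zariski dense in $\uG$ forces density. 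Spreading out over a finitely generated field and re-embedding into $\C$ exactly as in the proof of Theorem \ref{t:ll}, we obtain a nonsingular $\rho\in\uW(\C)$ with $\overline{\rho(T)}=\uG$ and $\dim H^1(T,\mathrm{Ad}\circ\rho)=\dim H^1(T,\mathrm{Ad}\circ\rho_0)>0$. Then $\rho$ is Zariski dense and not locally rigid, so Theorem \ref{t:key} gives that $T$ is saturated with finite quotients of type $X$.

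The main obstacle is the middle step: justifying that the generic point of $\uW$ really has enlarged image. The delicate point is that, a priori, the extra $\dim H^1>0$ dimensions of $\uW$ beyond the conjugation orbit of $\rho_0$ might be ``used up'' by deforming $\rho_0$ within a proper reductive subgroup containing $H$ rather than by genuinely spreading out in $\uG$; ruling this out is exactly what the combination of the Breuillard–Guralnick–Larsen density theorem (the closure of the generic image contains a conjugate of $H$, which is already irreducible on $\C^{m-1}$ and hence not contained in any proper connected reductive subgroup other than those between $\SO$ and itself) and Richardson rigidity handles in Theorem \ref{t:ll}. So the cleanest write-up is to observe that $\rho_0\colon T\to\uG$ satisfies hypotheses (a)–(d) of a degenerate version of Corollary \ref{c:lltriangle} with $\uH=\overline{\rho_0(T)}=H$ finite — here condition (c) reads $\dim\Epi(T,H)-\dim H=0<\dim Z^1(T,\frak g)-\dim\uG=\dim H^1(T,\mathrm{Ad}\circ\rho_0)$, which holds precisely by the hypothesis $\dim H^1(T,\mathrm{Ad}\circ\rho_0)>0$ — and then quote the argument of Theorem \ref{t:ll} verbatim (it never used connectedness or semisimplicity of $\uH$ in an essential way once $\Epi$ is replaced by the finite set $\Hom(T,H)$ of surjections, and the role of Richardson's theorem is played by the finiteness of $\mathrm{Hom}(T,H)$ up to $\uG$-conjugacy).
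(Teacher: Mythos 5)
The heart of this lemma is precisely the step you flag as delicate — showing that the nontrivial deformations of $\rho_0$ are Zariski dense — and your treatment of it does not hold up. Three concrete problems: (i) the sentence ``and then that $H$ is Zariski dense in $\uG$ forces density'' is false, since $H=\mathrm{Alt}_m$ is finite and so is certainly not dense in $\uG=\SO_{m-1}(\C)$; (ii) the claim that irreducibility of $H$ on $\C^{m-1}$ implies $H$ is ``not contained in any proper connected reductive subgroup'' is not a valid implication — $\SO_{m-1}$ has many proper connected subgroups acting irreducibly on the natural module (the principal $A_1$, tensor-product subgroups, smaller classical groups in irreducible or spin representations), so one must genuinely rule out that some positive-dimensional proper closed subgroup contains this copy of $\mathrm{Alt}_m$, and your substitute bound $\dim\Hom(T,\uM)-\dim\uM<\dim H^1(T,\mathrm{Ad}\circ\rho_0)$ for every such $\uM$ is exactly what would need proof and is nowhere supplied; (iii) you never exclude the possibility that the generic point of $\uW$ has \emph{finite} image: knowing that the generic image is not conjugate into $H$ (your dimension count on $\uG\times\Hom(T,H)$) does not make it infinite, since it could a priori be a different or larger finite group. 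Relatedly, Theorem \ref{t:ll} cannot be quoted ``verbatim'': its hypotheses (a) and (b) (semisimplicity, connectedness, maximality of $\uH$) fail for finite $H$, and maximality of $\mathrm{Alt}_m$ among closed subgroups of $\SO_{m-1}(\C)$ is itself a nontrivial fact you would have to establish.

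The ingredient the paper uses, and which your argument is missing, is that for $r\geq 2$ the group $H=\mathrm{Alt}_m$ acts \emph{irreducibly on the adjoint module} $\mathrm{Lie}(X)=\Lambda^2(\C^{m-1})$ (cited to \cite{FH} and \cite{LL}). Since irreducibility is an open condition, every $\rho$ in a neighborhood of $\rho_0$ in a component where the deformation is nontrivial still acts irreducibly on $\frak g$; then $\mathrm{Lie}(\overline{\rho(T)})$ is a $\rho(T)$-invariant subspace of $\frak g$, hence is $0$ or all of $\frak g$, so $\overline{\rho(T)}$ is either finite or equal to $\uG$ — there are simply no intermediate positive-dimensional subgroups to bound, which is what lets one avoid any hypothesis like (c) of Theorem \ref{t:ll}. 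The finite case is then killed by Jordan's theorem: if the image orders were bounded on the whole component, $\rho_0$ would be locally rigid, contradicting $\dim H^1(T,\mathrm{Ad}\circ\rho_0)>0$; if unbounded, the generic closure would be infinite and virtually abelian, contradicting irreducibility on $\frak g$. Your first two paragraphs (no invariants, nonsingularity, $\dim\uW=\dim H^1+\dim\uG>\dim\uG$) are fine and consistent with the paper, but without the adjoint-irreducibility dichotomy and the Jordan argument the density step is not established.
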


\begin{remk}
Note that if $T=T_{a,b,c}$ is saturated with finite  quotients of a given type, then so is $T_{a',b',c'}$ where $a'$, $b'$, $c'$ are any positive multiples of $a$, $b$, $c$, respectively. Indeed $T_{a,b,c}$ is a quotient of $T_{a',b',c'}$.
\end{remk}

\begin{proof}
Since $r\geq 2$, the action of $H$ on ${\rm Lie}(X)$ is irreducible  (see \cite[Ex. 4.6]{FH} and \cite[Proposition 3.1]{LL} and its proof).
As $\dim H^1(T,{\rm Ad}\circ \rho_0)>0$, $\rho_0$ has  nontrivial deformation.  We fix an irreducible 
component $\uX$ of $\Hom(T,\uG)$ containing $\rho_0$ on which the deformation is non-trivial.
Since being irreducible is an open condition, irreducibility on ${\rm Lie}(X)$ must hold in an open  neighborhood of $\rho_0$ in $\uX$. For $\rho$ in such a neighborhood, $\rho(T)$ stabilises ${\rm Lie}(\overline{\rho(T)})$.  Since $\rho(T)$ acts irreducibly on ${\rm Lie}(X)$, either ${\rm Lie}(\overline{\rho(T)})$  is  zero or 
it equals ${\rm Lie}(X)$. In the second case, by Theorem \ref{t:key}, we are done.  In the first case,  $\rho(T)$ is finite.  
From Jordan's Theorem, it then follows that  $\rho(T)$ has a normal abelian subgroup of bounded index,
or equivalently, $\rho(T_0)$ is abelian for some $T_0\subset T$ of bounded index.  As $T$ is finitely generated, there are finitely many possible $T_0$,
and their intersection $T_1$ is of finite index in $T$.  
If for all $\rho$ in $\uX(\C)$, $\rho(T)$ is of bounded order, then $\rho_0$ is locally rigid, a contradiction. If they are unbounded, then, in the generic representation of $\uX$, the Zariski closure is infinite and  
virtually abelian, again a contradiction. 
 \end{proof}


\begin{lem}\label{l:altgen}
Let $H={\rm Alt}_m$ and $(a,b,c)$ be as in Table \ref{t:one} below. Then $H$ is a quotient of $T=T_{a,b,c}$ with torsion-free kernel. Moreover, we can find elements $A$ and $B$  of $H$ of respective orders $a$ and $b$ such that $AB$ has order $c$ and  $\langle A,B \rangle=H$, where $A$, $B$ and $AB$ have cycle shapes as given in Table \ref{t:one} below.
\end{lem}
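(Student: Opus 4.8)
The plan is to prove the lemma by an explicit construction followed by a generation argument, after which the torsion-freeness of the kernel will come essentially for free. First I would fix a row of Table~\ref{t:one}, hence a hyperbolic triple $(a,b,c)$ and the corresponding $m$, and write down explicit permutations $A,B\in\mathrm{Sym}_m$ of the cycle shapes prescribed in the table, arranged so that the product $AB$ has the prescribed cycle shape. For the finitely many sporadic rows this is a direct check; for the infinite families (e.g.\ $(2,3,c)$ with $c$ in a range, $(3,3,c)$, etc.) I would give a construction uniform in the parameter, writing $A$ and $B$ each as a product of a fixed part together with long cycles whose lengths grow with $c$, and verifying by a bookkeeping computation that forming $AB$ splices these cycles into the prescribed cycle structure of order $c$. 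The one point here that needs genuine care is controlling the cycle shape of $AB$; I would handle it by tracking the orbits of $AB$ as they pass alternately through the interleaved cycle structures of $A$ and of $B$, a standard but slightly delicate combinatorial argument.

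Having produced $A,B$, three verifications are needed: (1) $A$, $B$ and $AB$ are even permutations --- read off from their cycle shapes --- so that $\langle A,B\rangle\le\mathrm{Alt}_m=H$; (2) the orders of $A,B,AB$ are \emph{exactly} $a,b,c$, i.e.\ the least common multiples of the cycle lengths in the respective shapes equal $a,b,c$; and (3) $\langle A,B\rangle=H$. Once (1)--(3) hold, setting $\rho_1(x)=A$, $\rho_1(y)=B$, $\rho_1(z)=(AB)^{-1}$ respects the defining relations $x^a=y^b=z^c=xyz=1$ of $T$ (note that $(AB)^{-1}$ has the same cycle shape, hence order $c$), so $\rho_1$ extends to a homomorphism $T\to\mathrm{Sym}_m$ with image $\langle A,B\rangle=H$, which is the required epimorphism.

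The heart of the proof is step (3), and this is where I expect the main obstacle. The strategy is the classical one: show $\langle A,B\rangle$ is transitive, then primitive, then invoke a Jordan-type theorem to force it to contain $\mathrm{Alt}_m$ (and hence equal it, by parity). Transitivity I would obtain either because one of $A$, $B$, $AB$ is an $m$-cycle (which holds for many rows), or because the supports of the cycles of $A$ and of $B$ overlap sufficiently to connect all $m$ points. Primitivity I would deduce from the cycle shapes by ruling out invariant block systems --- e.g.\ a cycle of length $\ell$ with $\gcd(\ell,m)$ small, or a fixed point alongside a long cycle, is incompatible with any nontrivial block system. Finally, a primitive subgroup of $\mathrm{Sym}_m$ containing a cycle of prime length fixing at least three points (or, more generally, an element of suitably small support) must contain $\mathrm{Alt}_m$ by Jordan's (or Marggraf's) theorem; since some power of one of $A,B,AB$ is such an element, and since $\langle A,B\rangle\le\mathrm{Alt}_m$, we get $\langle A,B\rangle=\mathrm{Alt}_m$. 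The delicate part is making this argument uniform across the infinite families, which may force a handful of small parameter values to be treated by a separate ad hoc check; alternatively one could invoke known classifications of $(a,b,c)$-generated alternating groups.

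It remains to see that $\ker\rho_1$ is torsion-free. Here I would use the standard fact that in a Fuchsian triangle group $T=T_{a,b,c}$ every element of finite order is conjugate to a power of one of $x,y,z$. Consequently $\ker\rho_1$ contains a nontrivial torsion element if and only if some $\rho_1(x^i)$, $\rho_1(y^j)$ or $\rho_1(z^k)$ with exponent not divisible by $a$, $b$ or $c$ respectively is trivial, i.e.\ if and only if one of $A,B,AB$ has order a proper divisor of $a,b,c$. Verification (2) rules this out, so $\ker\rho_1$ is torsion-free, completing the proof.
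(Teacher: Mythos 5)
Your overall architecture (build $A,B$ with the listed cycle shapes, check parities and exact orders, deduce the epimorphism $\rho_1$ from $xyz=1$, and get torsion-freeness of the kernel from the fact that torsion in $T_{a,b,c}$ is conjugate into $\langle x\rangle\cup\langle y\rangle\cup\langle z\rangle$ together with the exact-order check) is sound, and the torsion-freeness part in particular is handled correctly. But the heart of the lemma is the generation step, and there your proposed tool does not apply to most rows of Table \ref{t:one}. Jordan's theorem needs a cycle of prime length $p\le m-3$; in the $\mathrm{Alt}_{11}$ row $(2,3,11)$ the elements have shapes $(2)^4(1)^3$, $(3)^3(1)^2$, $(11)^1$, and no power of any of them is a single prime cycle fixing at least three points, while $M_{11}$ and $\mathrm{PSL}_2(11)$ are primitive groups of degree $11$ containing elements of exactly these shapes, so primitivity plus cycle data cannot force $\langle A,B\rangle=\mathrm{Alt}_{11}$. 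Similarly, in the degree-$9$ rows such as $(3,3,7)$ and $(3,3,9)$ the only long cycles are a $7$-cycle fixing two points or a $9$-cycle, and $\mathrm{P}\Gamma\mathrm{L}_2(8)$ is a primitive group of degree $9$ containing $7$-cycles, $9$-cycles and fixed-point-free elements of shape $(3)^3$; these proper primitive subgroups must be excluded by a genuinely finer argument (a specific choice of $A,B$ plus order/subgroup analysis, or a direct computation). Your fallback of "invoking known classifications of $(a,b,c)$-generated alternating groups" also does not suffice as stated, because the lemma requires generators with the \emph{specified cycle shapes} (these shapes are used later to compute $\dim H^1$), not merely $(a,b,c)$-generation.

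Note also a misreading: Table \ref{t:one} contains no infinite families, only eight explicit rows with $m\in\{8,9,11\}$, so the uniform-in-$c$ splicing construction you plan for "growing $c$" is not needed. The paper's own proof is purely computational: it exhibits (via MAGMA) a subgroup $S\le T$ of index $m$ whose coset action gives $f:T\to\mathrm{Sym}(T/S)$ with $f(T)=\mathrm{Alt}_m$ and $f(x),f(y)$ of the listed shapes, which settles generation and shapes simultaneously. Your by-hand route is a legitimate alternative, but as written the generation verification is exactly the part left open, and for these particular small degrees it amounts to the same finite case-by-case check the paper delegates to the machine.
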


\begin{table}[h]
\caption{{\footnotesize{Pairs $(A,B)$ of elements of $H={\rm Alt}_m$ such that $H=\langle A,B\rangle$, $|A|=a$, $|B|=b$ and $|AB|=c$}}}\label{t:one}
\center{
\begin{tabular}{|l|l|l|l|l|}
\hline
$H={\rm Alt}_m$ & $(a,b,c)$ & $A$ & $B$ & $AB$\\
\hline
${\rm Alt}_8$ & $(3,3,15)$ & $(3)^2(1)^2$ & $(3)^2(1)^2$ & $(5)(3)$\\
\hline
${\rm Alt}_9$ & $(2,3,15)$  & $(2)^4(1)^1$ & $(3)^3$ & $(5)^1(3)^1(1)^1$ \\
& $(3,3,7)$ & $(3)^3$ & $(3)^3$ & $(7)^1(1)^2$\\
& $(3,3,9)$ & $(3)^3$ & $(3)^2(1)^3$ & $(9)^1$\\
& $(3,3,10)$ & $(3)^3$ & $(3)^3$ & $(5)^1(2)^2$\\
& $(3,3,12)$ & $(3)^3$ & $(3)^2(1)^3$ & $(4)^1(3)^1(2)^1$\\
& $(3,3,15)$ & $(3)^3$ & $(3)^3$ & $(5)^1(3)^1(1)^1$\\
\hline
${\rm Alt}_{11}$ & $(2,3,11)$  & $(2)^4(1)^3$ & $(3)^3(1)^2$ & $(11)^1$ \\
\hline
\end{tabular}}
\end{table}

\begin{proof}
Using MAGMA \cite{MAGMA} one can find a subgroup $S$ of $T$ of index $m$ such that the action of $T$ on the set $T/S$ of cosets of $S$ in $T$ induces a homomorphism $f: T \rightarrow {\rm Sym}(T/S)$ satisfying $f(T)={\rm Alt}_m$ and $f(x)=A$, $f(y)=B$ where $A$, $B$ are elements of ${\rm Alt}_m$ such that $A$, $B$ and $AB$ have cycle shapes given in Table \ref{t:one}. The result follows.
\end{proof}

\begin{remk}
In the proof of Lemma \ref{l:altgen}, one can give $A$ and $B$ explicitly. However, for  conciseness, we only give the cycle shapes of $A$, $B$ and $AB$. This suffices for computing $\dim H^1(T,{\rm Ad}\circ \rho_0)$ as needed below.
\end{remk}

\begin{prop}\label{p:pos}
Let $X=B_r$ (respectively, $D_r$) and $H={\rm Alt}_{m}$ where $m=2r+2$ (respectively, $2r+1$).  Suppose $(H,(a,b,c))$ appears in Table \ref{t:one}.  Let $\rho_2$ be the standard representation of $H$ into ${\rm SO}_{m-1}(\mathbb{C})$, $\rho_1$ be the epimorphism from $T=T_{a,b,c}$ to $H$  provided by Lemma \ref{l:altgen}, and  $\rho_0=\rho_2\circ\rho_1$.  Then $\dim H^1(T,{\rm Ad}\circ\rho_0)>0$  and so $T$ is saturated with finite  quotients of type $X$.
\end{prop}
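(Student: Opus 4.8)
The plan is to verify the hypothesis of Lemma~\ref{l:c}, namely that $\dim H^1(T,{\rm Ad}\circ\rho_0)>0$, for each of the finitely many triples $(H,(a,b,c))$ appearing in Table~\ref{t:one}; once this is done, Lemma~\ref{l:c} immediately yields that $T=T_{a,b,c}$ is saturated with finite quotients of type $X$. Since $\rho_1:T\to H={\rm Alt}_m$ is an epimorphism, the action of $T$ on $\frak{g}={\rm Lie}(X)$ via ${\rm Ad}\circ\rho_0$ factors through $H$ and coincides with the action of ${\rm Alt}_m$ on $\Lambda^2(\C^{m-1})$ (for $D_r$) or on $\Lambda^2(\C^{m-1})$ again (for $B_r$, since $\frak{so}_{2r+1}=\Lambda^2(\C^{2r+1})$, and here $m-1=2r+1$); in both cases $\frak{g}$ is a concrete permutation-type module for ${\rm Alt}_m$. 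As noted in the proof of Lemma~\ref{l:c}, this module is irreducible, so in particular ${\rm Ad}\circ\rho_0$ has no invariants on $\frak{g}$, and likewise (self-duality) none on $\frak{g}^*$. Hence by Theorem~\ref{t:weil}(ii) we have the clean formula
$$\dim H^1(T,{\rm Ad}\circ\rho_0)=\dim\frak{g}-\bigl(\dim\frak{g}^{x}+\dim\frak{g}^{y}+\dim\frak{g}^{z}\bigr),$$
where $x,y,z$ act via $A:=\rho_1(x)$, $B:=\rho_1(y)$, $AB^{-1}$ (equivalently the conjugacy class of the third generator) as permutations.

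The key computational step is therefore to compute, for each row of Table~\ref{t:one}, the three fixed-space dimensions $\dim\frak{g}^{A}$, $\dim\frak{g}^{B}$, $\dim\frak{g}^{AB}$. Because $\frak{g}$ is a summand of $\Lambda^2$ of a permutation module, these dimensions depend only on the cycle shapes of $A$, $B$, $AB$ — which is exactly the data recorded in Table~\ref{t:one}, and exactly why the proof of Lemma~\ref{l:altgen} only bothered to record cycle shapes. Concretely, if a permutation $g\in{\rm Sym}_m$ has cycle type with $c_d$ cycles of length $d$, then the dimension of its fixed space on the $m$-dimensional permutation module is $\sum_d c_d$, on $\C^{m-1}$ it is $\bigl(\sum_d c_d\bigr)-1$, and on $\Lambda^2(\C^m)$ it is a standard elementary expression in the $c_d$ (number of unordered pairs of points fixed together, plus contributions from $2$-cycles and from pairs of equal-length cycles); subtracting the $\Lambda^2$ of the trivial summand gives $\dim\frak{g}^{g}$. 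I would tabulate these and plug into the displayed formula, checking in each case that the resulting $\dim H^1$ is strictly positive. For instance, for the largest cases such as $H={\rm Alt}_{11}$, $\dim\frak{g}=\binom{9}{2}=36$ and the three fixed spaces are comfortably small, so positivity is clear; the smallest cases ($H={\rm Alt}_8$, $\dim\frak{g}=\binom{6}{2}=15$) are the ones to watch, but even there the count comes out positive.

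The main obstacle is not conceptual but bookkeeping: one must get the $\Lambda^2$-fixed-space counts right, including the subtlety that for $B_r$ the relevant module is $\frak{so}_{2r+1}=\Lambda^2(\C^{2r+1})$ with $2r+1=m-1$, so the ambient natural module already has the right dimension and no further subtraction of a trivial summand is needed beyond the permutation-to-deleted-permutation step — whereas for $D_r$ one has $\frak{so}_{2r}=\Lambda^2(\C^{2r})$ with $2r=m-1$, again $\C^{m-1}$ being the deleted permutation module. A clean way to organize this uniformly is to use Lemma~\ref{l:flieo}: for $g$ semisimple of finite order (which every element of the finite group ${\rm Alt}_m$ is, under $\rho_2$), with eigenvalue multiplicities $m_\lambda$ on $\C^{m-1}$, one has $\dim\frak{g}^{{\rm Ad}(g)}=\binom{m_1}{2}+\binom{m_{-1}}{2}+\tfrac12\sum_{\lambda\neq\pm1}m_\lambda^2$; the eigenvalue multiplicities of a permutation with given cycle type on $\C^{m-1}$ are immediate (roots of unity of order $d$ for each $d$-cycle, minus one copy of the eigenvalue $1$). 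Running this for the three generators in each of the nine rows of Table~\ref{t:one} and confirming $\dim\frak{g}>\dim\frak{g}^x+\dim\frak{g}^y+\dim\frak{g}^z$ in every case completes the proof; I would present one worked example in detail and remark that the remaining rows are entirely analogous.
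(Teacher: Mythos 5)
Your proposal is correct and takes essentially the same route as the paper's proof: irreducibility of ${\rm Alt}_m$ on ${\rm Lie}(X)=\Lambda^2(\C^{m-1})$ gives vanishing invariants, Theorem \ref{t:weil}(ii) reduces the question to the three fixed-space dimensions, which are read off from the cycle shapes in Table \ref{t:one} via the eigenvalue multiplicities on $\C^{m-1}$ and Lemma \ref{l:flieo}, and Lemma \ref{l:c} then yields saturation. (Only a harmless slip: the third generator is $(AB)^{-1}$ rather than $AB^{-1}$, which in any case has the same fixed-space dimension as $AB$.)
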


\begin{proof} 
We first show that $\dim H^1(T,{\rm Ad}\circ\rho_0)>0$. Let $W$ be the natural module for ${\rm SO}_{m-1}(\mathbb{C})$  and  $V={\rm Lie}(X)=\Lambda^2(W)$. Since $H$ is irreducible on ${\rm Lie}(X)$, Theorem \ref{t:weil}(ii) yields
$$\dim H^1(T,{\rm Ad}\circ\rho_0)=\dim V-( \dim V^{{\rm Ad}\circ\rho_0(x)}+ \dim V^{{\rm Ad}\circ\rho_0(y)}+ \dim V^{{\rm Ad}\circ\rho_0(z)}).$$
Since $\dim V$ is either $r(2r-1)$ or $r(2r+1)$ according respectively as $X$ is $D_r$ or $B_r$, it now remains to compute $\dim V^{{\rm Ad}\circ\rho_0(t)}$ for $t \in \{x,y,z\}$.
Note that if $\rho_1(t)$ has cycle shape  $(1)^{n_0}(b_1)^{n_1}\dots(b_s)^{n_s}$, then $\rho_0(t)$  acts on $W$ with eigenvalues:
1 occuring with multiplicity $-1+\sum_{i=0}^s n_i$,
and  $\beta_i,\dots,\beta_i^{b_i-1}$ occuring $n_i$ times for $1\leq i \leq s$, where $\beta_i$ is a primitive $b_i$-th  root of unity.  Hence, using Lemma \ref{l:flieo}, an easy check yields $\dim H^1(T,{\rm Ad }\circ \rho_0)>0$ in all cases. The result now follows from Lemma \ref{l:c}.

\end{proof}

The following result shows that the alternating method cannot be used to determine whether $T$ is saturated with finite  quotients of type $X$ in the remaining open cases $(T,X)$ whith $X=B_r$ or $D_r$.

\begin{lem}\label{l:altnotgen}
If the pair $({\rm Alt}_m,(a,b,c))$ appears in Table \ref{t:two} below, then ${\rm Alt}_m$ is not $(a,b,c)$-generated.
\end{lem}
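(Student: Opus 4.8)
The plan is to treat each pair $({\rm Alt}_m,(a,b,c))$ listed in Table~\ref{t:two} by a finite check, exploiting that the degrees $m$ occurring here are small. Recall that ${\rm Alt}_m$ is $(a,b,c)$-generated exactly when there are conjugacy classes $\mathcal{C}_1,\mathcal{C}_2,\mathcal{C}_3$ of ${\rm Alt}_m$ consisting of elements of orders $a$, $b$, $c$ respectively, together with a triple $(u,v,w)\in\mathcal{C}_1\times\mathcal{C}_2\times\mathcal{C}_3$ with $uvw=1$ and $\langle u,v\rangle={\rm Alt}_m$. So the first step is to enumerate the admissible classes: in ${\rm Sym}_m$ these are the cycle types whose cycle lengths have least common multiple $a$ (resp.\ $b$, $c$), and one retains only those lying in ${\rm Alt}_m$, i.e.\ with an even number of even cycles (with the usual proviso that a cycle type all of whose parts are odd and distinct splits into two ${\rm Alt}_m$-classes). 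For several entries this already finishes the argument: either ${\rm Alt}_m$ contains no element of the required order (when, say, $c$ exceeds the maximal order of an element of ${\rm Alt}_m$), or every admissible cycle type for elements of orders $a$ and $b$ has support meeting at most $m-1$ points and the possible unions of supports are still proper subsets of $\{1,\dots,m\}$, forcing $\langle u,v\rangle$ to be intransitive, hence proper.

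For the surviving entries I would pass to the class-algebra (structure-constant) formula: for fixed admissible classes the number of solutions of $uvw=1$ with $u\in\mathcal{C}_1$, $v\in\mathcal{C}_2$, $w\in\mathcal{C}_3$ equals
$$\frac{|\mathcal{C}_1|\,|\mathcal{C}_2|\,|\mathcal{C}_3|}{|{\rm Alt}_m|}\sum_{\chi\in{\rm Irr}({\rm Alt}_m)}\frac{\chi(u)\,\chi(v)\,\chi(w)}{\chi(1)},$$
which is computable from the (classical, for these small $m$) character table of ${\rm Alt}_m$. If this number vanishes for every choice of admissible classes, non-generation is immediate. If it is positive, one must still show that \emph{every} such triple generates a proper subgroup; for this I would bound, for each maximal subgroup $M<{\rm Alt}_m$ containing elements of orders $a$, $b$, $c$ (the point stabiliser ${\rm Alt}_{m-1}$, the imprimitive and product-type subgroups, and the finitely many primitive maximal subgroups available in degree $m$), the number of triples lying in a conjugate of $M$ by the same formula applied inside $M$, and compare with the global count. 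In the cases at hand the cycle-type restrictions already confine all solutions to intransitive or imprimitive subgroups, so this comparison is short; equivalently, the whole verification can be carried out directly in MAGMA \cite{MAGMA}, exactly as was done for Table~\ref{t:one}.

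The main obstacle is precisely this last point: when the global structure constant is nonzero, one has to be certain it is entirely \emph{absorbed} by proper subgroups rather than producing a genuine generating pair. A clean proof therefore hinges on pinpointing, for each surviving pair in Table~\ref{t:two}, the exact reason for failure — an intransitivity forced by supports, an invariant block system, or the vanishing of the relevant character sums — and it is this finite but delicate case analysis that carries the real content. Finally, if one allows the orders of $u,v,w$ to be proper divisors of $a,b,c$ (as is relevant for surjections $T_{a,b,c}\twoheadrightarrow{\rm Alt}_m$), the same enumeration is simply repeated over all divisor triples, adding only finitely many further cases of the same type.
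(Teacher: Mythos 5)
Your overall plan --- reduce to a finite list for each $m$ and verify non-generation computationally --- is in the right spirit, but as written there is a genuine gap: the decisive step is asserted rather than proved. For the entries that survive your preliminary screening you claim that ``the cycle-type restrictions already confine all solutions to intransitive or imprimitive subgroups, so this comparison is short'', and you yourself concede that the ``finite but delicate case analysis \dots carries the real content''. That content is exactly what is missing, and the blanket claim is not true. For ${\rm Alt}_{19}$ with $(2,3,7)$, or for the ${\rm Alt}_{11}$ rows of Table \ref{t:two}, the supports of admissible elements of orders $2$ and $3$ can already cover all the points, so neither your support-union argument nor any cycle-type restriction forces intransitivity or imprimitivity; one is then left with a full structure-constant versus maximal-subgroup computation in groups as large as ${\rm Alt}_{19}$ (and, for the rows with unbounded $c$, a family of class triples), which you neither carry out nor reduce to a manageable finite statement. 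So the proposal is an outline of a verification, not a proof of the lemma.

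The paper's proof avoids exactly this by making the reduction effective before any computation. It first applies the Conder--McKay necessary condition for transitivity (Lemma \ref{l:u}), $m_1+m_2+m_3\le n+2$, which at a stroke eliminates every entry with $m\in\{11,19\}$ and most of the infinite families: e.g.\ for ${\rm Alt}_{19}$ and $(2,3,7)$ the minimal possible cycle counts are $11+7+7=25>19+2$. It then invokes Conder's theorem giving, for each $m$, the triple with $1/a+1/b+1/c$ maximal for which ${\rm Alt}_m$ is an $(a,b,c)$-group, which cuts the remaining rows down to six explicit triples with $m\in\{8,9\}$; only these are checked in MAGMA. Your final fallback ``do the whole verification in MAGMA'' is the same in spirit as that last step, but without reductions of this kind your argument never pins down, case by case, why each surviving pair fails, and in particular it would have to treat ${\rm Alt}_{11}$ and ${\rm Alt}_{19}$ head-on, where the ingredients you propose (supports, then structure constants plus maximal subgroups) are left entirely unexecuted. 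To repair the proposal, either import a quantitative transitivity obstruction such as Lemma \ref{l:u} (or the Riemann--Hurwitz/genus count it encodes), or actually perform and report the class-by-class computations you describe.
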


\begin{table}[h]
\center{
\caption{Some pairs $({\rm Alt}_m,(a,b,c))$ such that ${\rm Alt}_m$ is not $(a,b,c)$-generated}\label{t:two}
\begin{tabular}{|l|l|}
\hline
${\rm Alt}_m$ & $(a,b,c)$\\
\hline
${\rm Alt}_8$ & $(2,3,c),\quad c \geq 7$\\
& $(2,4,5)$, $(2,5,5)$\\
& $(3,3,c),  \quad c \geq 4, \quad c\not \equiv 0 \mod 15$ \\
\hline
${\rm Alt}_9$ & $(2,3,c), \quad c \geq 7, \quad c \not \equiv 0 \mod 15$\\
& $(3,3,c),  \quad c \geq 4, \quad c \not \equiv 0 \mod \alpha,  \alpha \in \{7,9,10,12,15\}$ \\
\hline
${\rm Alt}_{11}$ & $(2,3,c), \quad c\geq 7, \quad c \not \equiv 0 \mod 11$\\
& $(2,4,5)$\\
& $(3,3,4)$\\
\hline
${\rm Alt}_{19}$ & $(2,3,7)$ \\
\hline
\end{tabular}}
\end{table}

The following result   (see  \cite{CC}) is the main ingredient in proving Lemma \ref{l:altnotgen}.

\begin{lem}\label{l:u}
Suppose the group $H$ is generated by permutations $h_1$, $h_2$, $h_3$ acting on a set $\Omega$ of size $n$ such that $h_1h_2h_3$ is the identity permutation. If the generator $h_i$ has exactly $m_i$ cycles (for $1 \leq i \leq 3$) and $H$ is transitive on $\Omega$ then
$$m_1+m_2+m_3\leq n+2 \quad(\textrm{and} \quad m_1+m_2+m_3 \equiv n \mod 2).$$
\end{lem}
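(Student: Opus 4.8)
The plan is to interpret the triple $(h_1,h_2,h_3)$ as the monodromy of a connected branched covering of the $2$-sphere and to extract both assertions from the Riemann--Hurwitz formula. First I would realise $S^{2}$ as a CW-complex with three $0$-cells $P_1,P_2,P_3$, three $1$-cells $e_1,e_2,e_3$ (with $e_i$ joining the two $P_j$ with $j\neq i$), and two $2$-cells $\Delta^{+},\Delta^{-}$, so that $\chi(S^{2})=3-3+2=2$. The punctured sphere $S^{2}\setminus\{P_1,P_2,P_3\}$ has fundamental group generated by loops $\gamma_1,\gamma_2,\gamma_3$ around the punctures subject to $\gamma_1\gamma_2\gamma_3=1$; sending $\gamma_i\mapsto h_i$ defines a degree-$n$ covering of the punctured sphere, which (filling in the punctures) completes uniquely to a branched cover $p\colon Y\to S^{2}$. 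Here $Y$ is a closed surface, since near a point of $p^{-1}(P_i)$ lying on a cycle of $h_i$ of length $d$ the map $p$ looks like $z\mapsto z^{d}$; it is connected precisely because $H=\langle h_1,h_2,h_3\rangle$ acts transitively on $\Omega$; and it is orientable because it is an unbranched cover of the orientable surface $S^{2}$ over a dense open set. Pulling back the CW-structure, each $2$-cell and each $1$-cell of $S^{2}$ has exactly $n$ preimages (over a small contractible open set disjoint from the $P_i$ the covering is trivial), while $p^{-1}(P_i)$ has exactly $m_i$ points, one for each cycle of $h_i$.

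Counting cells of $Y$ then gives
$$\chi(Y)=\big(m_1+m_2+m_3\big)-3n+2n=m_1+m_2+m_3-n .$$
Since $Y$ is a connected closed orientable surface we have $\chi(Y)=2-2g$ for some integer $g\geq 0$, whence $m_1+m_2+m_3-n\leq 2$, i.e. $m_1+m_2+m_3\leq n+2$, and moreover $m_1+m_2+m_3-n$ is even, i.e. $m_1+m_2+m_3\equiv n\pmod 2$.

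For readers who prefer to avoid topology, the congruence is elementary on its own: a $d$-cycle has sign $(-1)^{d-1}$, so $\operatorname{sgn}(h_i)=(-1)^{n-m_i}$, and $h_1h_2h_3=1$ forces $(-1)^{3n-(m_1+m_2+m_3)}=1$, giving $m_1+m_2+m_3\equiv 3n\equiv n\pmod 2$. The inequality, however, genuinely needs the surface (the crude bound coming from ``a connected graph on $n$ vertices has at least $n-1$ edges'' only yields $m_1+m_2+m_3\leq 2n+1$): one can instead build $Y$ by hand as an explicit gluing of $2n$ triangles indexed by $\Omega\times\{+,-\}$, each with edges labelled $e_1,e_2,e_3$, gluing the $e_i$-edge of the triangle of $(\omega,+)$ to that of $(h_i(\omega),-)$, and then verify Euler's inequality $V-E+F\leq 2$ for the resulting orientable surface with $V=m_1+m_2+m_3$, $E=3n$, $F=2n$. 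The one point requiring care --- and the only real, if minor, obstacle --- is the bookkeeping in this gluing: checking that the link of each vertex is a circle (so that $Y$ is a closed surface), that the relation $h_1h_2h_3=1$ is exactly what makes the vertex links close up, that the vertex set is naturally the disjoint union of the cycle sets of $h_1,h_2,h_3$, and that connectivity of $Y$ coincides with transitivity of $H$ on $\Omega$.
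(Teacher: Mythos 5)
The paper does not give its own proof of this lemma --- it is simply quoted from Conder and McKay~\cite{CC} as the key ingredient for Lemma~\ref{l:altnotgen} --- so there is no internal argument to compare against; I am therefore assessing your proof on its own. It is correct. The relation $h_1h_2h_3=1$ makes $(h_1,h_2,h_3)$ the monodromy of a degree-$n$ cover of $S^2$ branched over three points, transitivity of $H$ on $\Omega$ is exactly connectivity of the total space $Y$, and the pulled-back CW structure gives $\chi(Y)=(m_1+m_2+m_3)-3n+2n=m_1+m_2+m_3-n$; since $Y$ is a closed connected orientable surface, $\chi(Y)=2-2g$ is even and at most $2$, which is the assertion. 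This Riemann--Hurwitz / branched-cover argument is the classical one (going back to Ree), and the Conder--McKay source that the paper cites argues in the same spirit, via Euler's formula for a surface built from a transitive permutation representation; so your route matches the provenance of the result rather than departing from it. Your stand-alone sign computation for the congruence, $\operatorname{sgn}(h_i)=(-1)^{n-m_i}$ and hence $m_1+m_2+m_3\equiv 3n\equiv n\pmod 2$, is a correct and useful elementary shortcut for the parity part alone. The bookkeeping you flag at the end --- that vertex links close up into circles precisely because $h_1h_2h_3=1$, that the preimage of $P_i$ is in bijection with the cycles of $h_i$, and that connectivity of $Y$ is equivalent to transitivity of $\langle h_1,h_2,h_3\rangle$ --- is indeed the only point that needs care, and you have identified it accurately.
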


\noindent{\textit{Proof of Lemma \ref{l:altnotgen}}.}
Applying Lemma \ref{l:u} we immediately reduce to the case $m \in \{8,9\}$. Moreover using \cite[Theorem, pp. 84--85]{Conder} where Conder gives for $m \geq 5$ a triple $(a,b,c)$ with $1/a+1/b+1/c$ maximal such that ${\rm Alt}_m$ is an $(a,b,c)$-group, we are reduced to the following cases:
$$m=8 \quad \textrm{and} \quad(a,b,c) \in \{(3,3,6),(3,3,7))\}$$ or $$ m=9 \quad \textrm{and} \quad (a,b,c) \in \{(2,3,12),(3,3,4),(3,3,5),(3,3,6)\}.$$
It remains to show that in these cases ${\rm Alt}_m$ is not $(a,b,c)$-generated.
Using \cite{MAGMA} one easily checks that indeed this does not occur.  \quad $\square$


\begin{thebibliography}{99}


\bibitem{MAGMA} W. Bosma, J. Cannon and C. Playoust.  The Magma algebra system. I. The user language, J. Symbolic Comput. \textbf{24} (1997), 235--265.


\bibitem{Bourbaki} N. Bourbaki. Groupes et alg\`{e}bres de Lie. Chap. 5-6, Hermann, Paris, 1968.

\bibitem{BGL} E. Breuillard, R. Guralnick and M. Larsen.  Strongly dense free subgroups of semisimple algebraic groups  II, preprint.


\bibitem{Conder} M. D. E. Conder. Some results on quotients of triangle groups. Bull. Austral. Math. Soc. \textbf{30} (1984), 73--90.

\bibitem{CC} M. D. E. Conder and J. McKay. A necessary condition for transitivity of a finite permutation group. Bull. London Math. Soc. \textbf{20} (1988), 235--238.






\bibitem{D2} E. Dynkin. Semisimple subalgebras of semisimple Lie algebras. Amer. Math. Soc. Transl. \textbf{6} (1957), 111--245.

\bibitem{D3} E. Dynkin. Maximal subgroups of the classical groups. Amer. Math. Soc. Transl. \textbf{6} (1957), 245--378.



\bibitem{FH} W. Fulton and J. Harris. Representation theory. A first course. Graduate Texts in Mathematics, 129. Readings in Mathematics. Springer-Verlag, New York, 1991. 






\bibitem{LL} M. Larsen and A. Lubotzky. Representation varieties of Fuchsian groups. From Fourier analysis and number theory to Radon transforms and geometry--in memory of Leon Ehrenpreis, 375-397. Developments in Mathematics \bf{28}. Edited by H. M. Farkas et al. Springer, New York 2013.

\bibitem{LLM1} M. Larsen, A. Lubotzky and C. Marion. Deformation theory and finite simple quotients of triangle groups I. Preprint.


\bibitem{Lawther} R. Lawther. Elements of specified order in simple algebraic groups. Trans. Amer. Math. Soc. \textbf{357} (2005), 221--245.






\bibitem{Marionconj} C. Marion. On triangle generation of finite groups of Lie type. J. Group Theory \textbf{13}
    (2010), 619--648.








\bibitem{Richardson} R. W. Richardson, Jr.
A rigidity theorem for subalgebras of Lie and associative algebras.
Illinois J. Math. \textbf{11} (1967) 92--110.

\bibitem{SS} J. Saxl and G. Seitz. Subgroups of algebraic groups containing regular unipotent elements. J. London Math. Soc. \textbf{55} (1997), 370--386.





\bibitem{Weil} A. Weil. Remarks on the cohomology of groups. Ann. of Math. \textbf{80} (1964), 149--157.




\bigbreak

\bigbreak

\noindent Michael Larsen, Department of Mathematics, Indiana University, Bloomington, IN, USA 47405\\
 e-mail: mjlarsen@indiana.edu\\

\noindent Alexander Lubotzky, Einstein Institute of Mathematics, The Hebrew University of Jerusalem, Jerusalem 91904, Israel \\
e-mail: alex.lubotzky@mail.huji.ac.il\\

\noindent Claude Marion, Einstein Institute of Mathematics, The Hebrew University of Jerusalem, Jerusalem 91904, Israel\\
e-mail: claude.marion@mail.huji.ac.il

\end{thebibliography}
\end{document}